\newsavebox{\@brx}
\newcommand{\llangle}[1][]{\savebox{\@brx}{\(\m@th{#1\langle}\)}%
  \mathopen{\copy\@brx\kern-0.5\wd\@brx\usebox{\@brx}}}
\newcommand{\rrangle}[1][]{\savebox{\@brx}{\(\m@th{#1\rangle}\)}%
  \mathclose{\copy\@brx\kern-0.5\wd\@brx\usebox{\@brx}}}
\newcommand{\ep}{\epsilon}
\newcommand{\si}{\sigma}
\newcommand{\Si}{\Sigma}
\newcommand{\Ga}{\Gamma}
\newcommand{\bC}{\mathbb{C}}
\newcommand{\bL}{\mathbb{L}}
\newcommand{\bP}{\mathbb{P}}
\newcommand{\bQ}{\mathbb{Q}}
\newcommand{\bR}{\mathbb{R}}
\newcommand{\bZ}{\mathbb{Z}}
\newcommand{\cE}{\mathcal{E}}
\newcommand{\cL}{\mathcal{L}}
\newcommand{\cM}{\mathcal{M}}
\newcommand{\cO}{\mathcal{O}}
\newcommand{\cS}{\mathcal{S}}
\newcommand{\ch}{\mathrm{ch}}
\newcommand{\Hom}{\mathrm{Hom}}
\newcommand{{\inv} }{\mathrm{inv}}
\newcommand{\ev}{\mathrm{ev}}
\newcommand{\Aut}{\mathrm{Aut}}
\newcommand{\Res}{\mathrm{Res}}
\newcommand{\val}{ {\mathrm{val}} }
\newcommand{\vir}{{\mathrm{vir}}}
\newcommand{\SYZ}{\mathrm{SYZ}}
\newcommand{\one}{\mathbf{1}}
\newcommand{\bu}{\mathbf{u}}
\newcommand{\bp}{\mathbf{p}}
\newcommand{\bt}{\mathbf{t}}
\newcommand{\su}{\mathsf{u}}
\newcommand{\sv}{\mathsf{v}}
\newcommand{\sw}{\mathsf{w}}
\newcommand{\nov}{\Lambda_{\mathrm{nov}}}
\newcommand\fh{\mathfrak{h}}
\newcommand{\CP}{\bP^1}
\newcommand{\RP}{\bR\bP^1}
\newcommand{\OGW}{\CP,\RP,T}
\newcommand{\textOGW}{\text{OGW}}
\newcommand{\forget}{\text{for}}
\newtheorem{lma}{Lemma}[section]
\newtheorem{coro}[lma]{Corollary}
\newtheorem{defn}[lma]{Definition}
\newtheorem{prop}[lma]{Proposition}
\newtheorem{theorem}[lma]{Theorem}
\newtheorem{remark}[lma]{Remark}
\theoremstyle{definition}
\newtheorem{definition}[lma]{Definition}
\newtheorem{convention}[lma]{Convention}
\newtheorem{conjecture}[lma]{Conjecture}
\begin{document}

\title{All genus open mirror symmetry for the projective line}

\author{Jinghao Yu}
\address{Jinghao Yu, Department of Mathematical Sciences, Tsinghua University, Haidian District, Beijing 100084, China}
\email{yjh21@mails.tsinghua.edu.cn}

\author{Zhengyu Zong}
\address{Zhengyu Zong, Department of Mathematical Sciences,
	Tsinghua University, Haidian District, Beijing 100084, China}
\email{zyzong@mail.tsinghua.edu.cn}

\maketitle

\begin{abstract}
	We prove that the Chekhov-Eynard-Orantin recursion on the mirror curve of $\bP^1$ encodes all genus equivariant open Gromov-Witten invariants of $(\bP^1,\bR\bP^1)$. This result can be viewed as an all genus equivariant open mirror symmetry for $(\bP^1,\bR\bP^1)$.
\end{abstract}

\tableofcontents

\section{Introduction}

\subsection{Historical background and motivation}
\subsubsection{All genus closed mirror symmetry for the projective line}
Mirror symmetry is a duality from string theory originally discovered by physicists. It says two
dual string theories -- type IIA and type IIB -- on different Calabi-Yau 3-folds give rise to the same
physics. Mathematicians became interested in this relationship around
1990 when Candelas, de la Ossa, Green, and Parkes \cite{CdGP} obtained
a conjectural formula of the number of rational curves of arbitrary
degree in the quintic 3-fold by relating it to period integrals
of the quintic mirror.

By late 1990s mathematicians had
established the foundation of Gromov-Witten (GW) theory as a mathematical theory of A-model topological closed strings. In this context, the genus $g$ free energy of the topological A-model is
defined as a generating function of genus $g$ Gromov-Witten invariants. The mathematical aspect of genus zero closed mirror symmetry has been well-studied in many cases. Givental \cite{G96} and Lian-Liu-Yau \cite{LLY97} independently proved
the genus zero mirror formula for the quintic Calabi-Yau 3-fold $Q$; later they
extended their results to Calabi-Yau complete intersections in
projective toric manifolds \cite{G98, LLY99, LLY3}. The genus-zero mirror theorem for toric Deligne-Mumford stacks is proved in \cite{CCIT, CCK}.

The higher genus mirror symmetry is much more subtle. Bershadsky-Cecotti-Ooguri-Vafa (BCOV) conjectured the genus-one and genus-two mirror formulae for the quintic 3-fold \cite{BCOV}. Combining the techniques of BCOV, results of Yamaguchi-Yau \cite{YY04}, and boundary conditions,
Huang-Klemm-Quackenbush \cite{HKQ} proposed a mirror conjecture on $F_g^Q$ up to $g=51$. The BCOV genus-one  mirror formula was first proved
by A. Zinger in \cite{Zi09} using genus-one reduced Gromov-Witten theory, and later reproved in \cite{KimL, CFKim} via quasimap theory and
in \cite{CGLZ} via MSP theory. The BCOV genus-two mirror formula was proved by Guo-Janda-Ruan \cite{GJR} and Chang-Guo-Li \cite{CGL}. The Yamaguchi--Yau's finite generation and the holomorphic anomaly equation for quintic 3-folds were proved in \cite{CGL} and \cite{GJR2}.

The all genus closed mirror symmetry for the projective line is easier to study and there is a nice mathematical theory for the higher genus B-model. P. Dunin-Barkowski, N. Orantin, S. Shadrin and L. Spitz \cite{DOSS} relate the Chekhov-Eynard-Orantin topological recursion to the higher genus closed Gromov-Witten invariants of $\bP^1$, proving the Norbury-Scott conjecture \cite{NS}. In \cite{FLZ17}, the above result is generalized to the equivariant setting. The main result of \cite{FLZ17} relates the higher genus equivariant descendant Gromov-Witten potentials of $\bP^1$ to the oscillatory integrals of Chekhov-Eynard-Orantin invariants of the mirror curve. This result can be viewed as an all genus equivariant closed mirror symmetry for $\bP^1$. By taking the nonequivariant limit, the Norbury-Scott conjecture is recovered.

\subsubsection{Open Gromov-Witten theory for the projective line}
The open Gromov-Witten invariants are more difficult to study than the closed case. The all genus $S^1$-equivariant open Gromov-Witten theory of $(\bP^1,\bR\bP^1)$ is studied via coherent boundary conditions by Buryak-Netser Zernik-Pandharipande-Tessler in \cite{BNPT22}. A localization formula is conjectured and proved in the genus zero case. The localization formula is crucial for the computation of open Gromov-Witten theory of $(\bP^1,\bR\bP^1)$. It is natural to ask whether one can extend the result in \cite{FLZ17} to the open string sector. This will be introduced in Section \ref{sec:openMS}.

\subsubsection{All genus open mirror symmetry for the projective line}\label{sec:openMS}
In this paper, we prove the all genus open mirror symmetry for $(\bP^1,\bR\bP^1)$, extending the result in \cite{FLZ17} to the open string sector. On A-model side, we will study the open Gromov-Witten invariants of $(\bP^1,\bR\bP^1)$ from two different but related points of view. From the first point of view, we define the higher genus open Gromov-Witten invariants of $(\bP^1,\bR\bP^1)$ \textit{via integration over the fixed locus} of the $S^1$-action and consider the corresponding generating functions in Section \ref{sec:openGW}. On B-model side, instead of taking the oscillatory integrals, we take the expansion of the Chekhov-Eynard-Orantin invariants under certain local coordinate at the puncture of the mirror curve. The first main theorem (Theorem \ref{thm:main}) of this paper states that this expansion coincides with the generating function of higher genus open Gromov-Witten invariants of $(\bP^1,\bR\bP^1)$ under the mirror map. The second main theorem (Theorem \ref{thm:main-ii}) generalizes the above result to the case when descendant insertions appear.

From the second point of view, we define the higher genus open Gromov-Witten invariants of $(\bP^1,\bR\bP^1)$ as a graph sum in Section \ref{sec:comb-OGW} and Section \ref{sec:gen-geo}. The vertex factor in this graph sum formula is given by the open Gromov-Witten invariants via integration over the fixed locus defined in Section \ref{sec:openGW}. In other words, the open Gromov-Witten invariants via integration over the fixed locus are the building blocks in the second point of view of the open Gromov-Witten invariants of $(\bP^1,\bR\bP^1)$. In \cite{BNPT22}, this graph sum formula is conjectured to have a geometric definition via coherent boundary conditions. So we call the second point of view the open Gromov-Witten invariants \textit{via coherent boundary conditions}. The third main theorem (Theorem \ref{thm:main-iii}) of this paper is the all genus mirror symmetry for open Gromov-Witten invariants of $(\bP^1,\bR\bP^1)$ via coherent boundary conditions.

We would like to remark that similar open mirror symmetry phenomenon appears in the case of toric Calabi-Yau 3-folds/3-orbifolds, where the open Gromov-Witten invariants are defined \textit{via integration over the fixed locus}. Based on the work of Eynard-Orantin \cite{EO07} and Mari\~{n}o
\cite{Ma}, Bouchard-Klemm-Mari\~{n}o-Pasquetti  (BKMP) \cite{BKMP09, BKMP10} proposed a new formalism of the topological B-model in terms of the Chekhov-Eynard-Orantin invariants of the mirror curve. BKMP conjectured a precise correspondence, known as the BKMP Remodeling Conjecture, between the local expansion of the Chekhov-Eynard-Orantin invariants at the puncture of the mirror curve and the generating function of open Gromov-Witten invariants of toric Calabi-Yau 3-folds/3-orbifolds. The mathematical study of the Remodeling Conjecture can be found, for example, in \cite{BCMS,Ch09,EO15,FL,FLT,FLZ20a,FLZ20b,Zh09,Zh09b,Zh10,Zhu}. The difference in our case is that our target space $\bP^1$ is compact and non-Calabi-Yau.

We emphasize the following feature of our main results. Since $\bP^1$ and $\bR\bP^1$ are compact, the non-equivariant limit of the $S^1$-equivariant open Gromov-Witten invariants of $(\bP^1,\bR\bP^1)$ defined in Section \ref{sec:geo-open} is conjectured \cite[Conjecture 1]{BNPT22} to be equal to the non-equivariant open Gromov-Witten invariants of $(\bP^1,\bR\bP^1)$, defined geometrically via coherent boundary conditions. In general, the computation of higher genus open Gromov-Witten invariants is quite difficult. The main results of our paper provide an effective algorithm of computing higher genus open Gromov-Witten invariants of $(\bP^1,\bR\bP^1)$ recursively.

\subsection{Statement of the main result}
In this paper, we denote the complex projective line by $\bP^1$. Let $t\in T=S^1$ act on $\bP^1$ by
$$
	t\cdot [z_1,z_2] = [tz_1,  t^{-1}z_2].
$$
Let $\bC[\sv]=H_{T}^*(\mathrm{point};\bC)$ be
the $T$-equivariant cohomology of a point. Let $p_1=[1,0]$ and $p_2=[0,1]$ be the $T$ fixed points. The $T$-equivariant cohomology of $\bP^1$ is given by
$$
H^*_{T}(\bP^1;\bC)= \bC[H,\sv]/\langle (H+\sv/2)(H-\sv/2)\rangle
$$
where $\deg H=\deg \sv=2$. Let 
$$   
L := \{[e^{{\rm i}\varphi},e^{-{\rm i}\varphi}]\in\CP: \varphi\in\bR\}
$$
be the Lagrangian submanifold of $\CP$, which is preserved by the $T$-action. By taking a M\"obius transform, we can identify the pair $(\CP,L)$ with $(\CP,\RP)$.

In Section \ref{sec:gen.fun.}, we will define the generating function $F_{g,n}(\bt,Q;X_1,\dots,X_n)$ of genus $g$, $n$ boundary circles open Gromov-Witten invariants of $(\CP,\RP)$ via integration over the fixed locus of the $S^1$-action. 
Here $\bt=t^01 + t^1H$, $Q$ is the Novikov variable encoding the degree of the stable maps to $\bP^1$, and $X_1,\cdots,X_n$ are variables encoding the winding numbers (viewed as the open string coordinates).
We will also define the generating function $\llangle \cdots \rrangle_{g,m,n}^{\OGW}(\bt,X_1,\dots,X_n)$ of genus $g$, $m$ descendant insertion classes, $n$ boundary circles open Gromov-Witten invariants of $(\CP,\RP)$. 

In Section \ref{sec:curve}, we will study the mirror curve of $\bP^1$. Consider the $T$-equivariant superpotential $W:\bC^*\rightarrow \bC$ defined as
\[
W(Y) = t^0 + \frac{\sv}{2}\log q + Y + \frac{q}{Y} - \sv\log Y,
\]
where $q=e^{t^1}$. Let $X= e^{-x/\sv}$, $Y=e^y$. Consider the mirror curve $C_q$ defined as follows:
\[
C_q = \{(x,y)\in\bC^2: x = W(e^y)\}.
\]
In Section \ref{sec:TR}, we will study the Chekhov-Eynard-Orantin invariants $\omega_{g,n}$ of the mirror curve $C_q$. 
Then we use $\omega_{g,n}$ to define the B-model open primary potential $W_{g,n}(t^0,q;X_1,\dots,X_n)$. 
In Section \ref{sec:SYZmirror}, we study the 
oscillatory integrals of $\omega_{g,n}$ and define the B-model open descendant potential $W_{g,m,n}(t^0,q;\cE_1,\dots,\cE_m;X_1,\dots,X_n)$, where $\cE_1,\dots,\cE_m\in K_T(\CP)$.

The following three theorems are the main results of this paper:
\begin{theorem}[= Theorem \ref{thm:stableMS}]\label{thm:main}  For any $n>0$ and $g\geq 0$, we have
	\[
	F_{g,n}(\bt,1;X_1,\dots,X_n) =  (-1)^{g-1}W_{g,n}(t^0,q;X_1,\dots,X_n).
	\]
\end{theorem}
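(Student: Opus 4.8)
The plan is to express both sides as sums over decorated stable graphs and to match them block by block, importing the closed-string comparison of \cite{FLZ17, DOSS} for the internal structure and supplying a separate identification for the open legs. On the A-model side I would apply the $T$-equivariant virtual localization formula for the open Gromov-Witten invariants of $(\CP,\RP)$ from \cite{BNPT22}. This writes each coefficient of $F_{g,n}$ as a sum over localization graphs whose vertices lie over the fixed points $p_1,p_2$ and carry closed equivariant descendant invariants, whose edges record multiple covers through the equator, and whose $n$ open legs carry the disk contributions indexed by the winding numbers $X_1,\dots,X_n$. On the B-model side I would use Eynard's graph-sum expansion of the Chekhov-Eynard-Orantin invariants $\omega_{g,n}$: the differentials decompose over stable graphs whose vertices encode the local data at the two ramification points of $C_q$, whose edges carry the expansion coefficients of the Bergman kernel $\omega_{0,2}$ (the propagators), and whose external legs carry the local expansion of the one-forms at the puncture in the coordinate $X=e^{-x/\sv}$.

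Since the two graph sums are indexed by the same combinatorial data, it suffices to match three kinds of building blocks. For the closed blocks -- the vertex factors and the edge propagators -- I would invoke \cite{FLZ17}, which already furnishes the mirror map $q=e^{t^1}$ and identifies the asymptotic expansions of $C_q$ at its ramification points with the $R$-matrix of the semisimple $T$-equivariant Frobenius structure of $\bP^1$. Through the \cite{DOSS} dictionary this makes the Laplace transforms of the local vertices coincide with the quantized closed descendant potentials at $p_1,p_2$ and makes the Bergman-kernel propagators coincide with the edge factors of the localization graphs, so that every internal contribution agrees after the mirror map and the problem collapses onto the external legs.

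The genuinely new, and I expect hardest, step is the matching of a single open leg: the local expansion at the puncture of $C_q$ in the coordinate $X$ must reproduce the disk function attached to each boundary leg of the localization graphs. Here the defining equation $x=W(e^y)$ of the mirror curve is precisely the equivariant disk potential, so the base differential $\omega_{0,1}=y\,dx$ should, under $X=e^{-x/\sv}$ and $Y=e^y$, expand into the generating function of genus-zero one-boundary invariants computed by the genus-zero case of \cite{BNPT22}, and the higher $\omega_{g,n}$ attach this same leg to the matched internal graph. The main obstacle is to carry out this identification with full control of the boundary contributions in the localization formula: one must check that each variable $X_i$ corresponds exactly to the chosen local coordinate at the puncture, and that the disk factor receives no equivariant correction beyond what the superpotential $W$ already records.

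Finally I would assemble the matched blocks and pin down the overall normalization. Once vertices, edges and open legs agree term by term, the two graph sums coincide up to a global sign that I expect to organize by genus into $(-1)^{g-1}$, coming from the differing conventions of the topological recursion and the open Gromov-Witten generating function together with the grouping by the Euler characteristic $2-2g-n$. The remaining bookkeeping point is to verify that this sign is uniform across all graphs contributing to a fixed $(g,n)$ rather than graph-dependent; I would confirm it on the low-genus base cases and then propagate it through the recursion.
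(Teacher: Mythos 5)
Your overall architecture --- graph sums on both sides, closed blocks matched via \cite{FLZ17}/\cite{DOSS}, a separate identification of the open legs, and a genus-organized sign --- is the same as the paper's. But there is a genuine structural gap in how you set up the A-side. You propose to take the localization graph sum of \cite{BNPT22} and match it \emph{directly}, block by block, against the topological recursion graph sum, claiming the two sums ``are indexed by the same combinatorial data.'' They are not. Localization graphs have degree-labeled edges (one graph for each degree assignment, hence infinitely many graphs per $(g,n)$), bipartite vertex labels, and vertex contributions that are \emph{Hodge integrals} $\int \Lambda_g^\vee({\bf w})/\prod({\bf w}_f-\psi_f)$; the CEO/Givental graphs have height-labeled half-edges, dilaton leaves, pure $\psi$-integrals $\langle\tau_{k_1}\cdots\tau_{k_m}\rangle_g$ at vertices, and $R$-matrix edge weights with all degree/Novikov data resummed into $\Delta^\alpha(q)$ and $R(z)$. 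A single CEO graph corresponds to a resummation of infinitely many localization graphs, so there is no block-by-block matching at the level you describe. The missing bridge --- which is exactly what the paper supplies --- is a two-step conversion: first the open/descendant correspondence (Theorem \ref{thm:open-descendant}), which rewrites the localized open invariants as closed \emph{descendant} invariants of $\bP^1$ capped with explicit disk factors $D^1,D^2$ and insertions $\tilde{\xi}^\alpha_k(X)$; then Givental's quantization formula (Theorem \ref{thm:descendant-graph-sum}) to re-expand that descendant potential in the Givental graph form (Corollary \ref{thm:open-A-graph-sum-Qone}), whose graphs genuinely do coincide with the DOSS graphs of Theorem \ref{thm:B-graph-sum}. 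Without this intermediate step your matching would fail at the first edge weight: ${\bf h}(e,d_e)/d_e$ is not $\check{B}^{\alpha,\beta}_{k,l}$. Relatedly, your open-leg step needs more than the observation that $x=W(e^y)$ ``is'' the disk potential: the paper's identification (Theorem \ref{thm:open-leaves}) requires the expansion operator $\fh_X$ at the puncture, explicit Bessel-function/residue computations matching $\fh_X(\xi_{\alpha,0})$ with $S$-operator components via the quantum differential equation, and the relation $\theta_\alpha(z)=\sum_\beta \check{R}_\beta^{\ \alpha}(-z)\hat\theta_\beta(z)$ (Lemma \ref{lma:theta-R-matrix}); and the leaf identification comes with a sign, $(\tilde{\cL}^O)^\beta_k|_{Q=1}=-(\check{\cL}^O)^\beta_k$, which combines with the prefactor $(-1)^{g-1+n}$ in the B-model weight to give $(-1)^{g-1}$ uniformly on every graph --- no low-genus verification or propagation argument is needed, nor would one suffice as a proof.

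A second, smaller gap: the theorem is claimed for all $n>0$, $g\geq 0$, but graph sums only cover the stable range $2g-2+n>0$. You sketch the disk case $(0,1)$, but the annulus case $(0,2)$ is absent from your proposal; it is not a graph-sum statement and requires a separate argument. The paper handles it by introducing the regularized form $\tilde\omega_{0,2}=\omega_{0,2}-\frac{dX_1dX_2}{(X_1-X_2)^2}$, the auxiliary $2$-form $C(Y_1,Y_2)$, and the identity $C(Y_1,Y_2)=\frac{1}{2}\sum_\alpha d\xi_{\alpha,0}(Y_1)d\xi_{\alpha,0}(Y_2)$ from \cite{FLZ20b}, followed by integration by parts under $\fh_{X_1,X_2}$. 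You would need to supply an equivalent treatment for your proof to cover the stated range.
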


\begin{theorem}[= Theorem \ref{thm:SYZ-open-mirror}]\label{thm:main-ii}
    Suppose $g,m,n\geq 0$, for any $\cE_1,\dots,\cE_m\in K_T(\CP)$, we have
    \begin{align*}
        &W_{g,m,n}(t^0,q;\cE_1,\dots,\cE_m;X_1,\dots,X_n)
        \\
        &=(-1)^{g-1}\llangle\frac{\kappa_{z_1}(\cE_1)}{z_1-\psi_1},\dots,\frac{\kappa_{z_m}(\cE_m)}{z_m-\psi_m}\rrangle_{g,m,n}^{\OGW}(\bt,X_1,\dots,X_n).
    \end{align*}
    Here the characteristic class $\kappa_{z}$ is defined in Section \ref{sec:SYZmirror} and $z_1,\cdots,z_m$ are formal variables. The dependence of $W_{g,m,n}$ on $z_1,\cdots,z_m$ is via oscillatory integrals defined in Section \ref{sec:SYZmirror}.
\end{theorem}

In Section \ref{sec:geo-counting} and \ref{sec:comb-OGW}, we will study the moduli specification $S$ and the combinatorial graph sum formula of open Gromov-Witten invariants $\textOGW(S,\vec{a},\vec{\gamma})$,
where the vertex factor is given by the open Gromov-Witten invariants via integration over the fixed locus defined in Section \ref{sec:openGW}. In \cite{BNPT22}, $\textOGW(S,\vec{a},\vec{\gamma})$ is conjectured to have a geometric definition via coherent boundary conditions.
We will define the decorated graph $\Ga_S$ and the generating function $\textOGW(\Ga_S,\vec{a},\vec{\gamma},\bt)$ in Section \ref{sec:gen-geo}. In Section \ref{sec:B-geo-count}, we define $\textOGW_{g,m,n}(\Ga_S, \frac{\kappa_{z_i}(\cE_i)}{z_i-\psi_i},\bt)$ as a function of $z_i$ composed of $\textOGW(\Ga_S,\vec{a},\vec{\gamma},\bt)$. 
The B-model potential $G_{g,m,n}(\Ga_S,\bt)$ is defined in Section \ref{sec:B-geo-count} via graph sum formula, where the building blocks are Chekhov-Eynard-Orantin invariants. 
The third main theorem is the mirror symmetry for $\textOGW_{g,m,n}(\Ga_S, \frac{\kappa_{z_i}(\cE_i)}{z_i-\psi_i},\bt)$.
\begin{theorem}[= Theorem \ref{thm:geo-MS}]\label{thm:main-iii}
    \rm Let $S$ be a pure connected moduli specification with one black vertex, $n$ white vertices and $m$ marked labels ${\sf I}$. Then we have
    $$
        \textOGW_{g,m,n}(\Ga_S, \frac{\kappa_{z_i}(\cE_i)}{z_i-\psi_i},\bt) = G_{g,m,n}(\Ga_S,\bt).
    $$
\end{theorem}

\subsection{Overview of the paper}
In Section \ref{sec:closedGW}, we study the equivariant closed Gromov-Witten theory of $\bP^1$ and give the graph sum formula for the descendant Gromov-Witten potential. In Section \ref{sec:openGW}, we define the equivariant open Gromov-Witten theory of $(\CP,\RP)$ via integration over the fixed locus of the $S^1$-action and define the A-model potential $F_{g,n}$ and $\llangle\frac{\kappa_{z_1}(\cE_1)}{z_1-\psi_1},\dots,\frac{\kappa_{z_m}(\cE_m)}{z_m-\psi_m}\rrangle_{g,m,n}^{\OGW}$. In Section \ref{sec:Bmodel}, we study the mirror curve of $\bP^1$ and the Chekhov-Eynard-Orantin topological recursion. Then we use the Chekhov-Eynard-Orantin invariants $\omega_{g,n}$ to define the B-model potential $W_{g,m,n}$. In Section \ref{sec:MS}, we prove the all genus open mirror symmetry for $F_{g,n}$ and $\llangle\frac{\kappa_{z_1}(\cE_1)}{z_1-\psi_1},\dots,\frac{\kappa_{z_m}(\cE_m)}{z_m-\psi_m}\rrangle_{g,m,n}^{\OGW}$. In Section 6, we consider the open Gromov-Witten invariants $\textOGW(S,\vec{a},\vec{\gamma})$ and prove the corresponding all genus open mirror symmetry.

\subsection*{Acknowledgements}
The authors would like to thank Bohan Fang, Chiu-Chu Melissa Liu, and Song Yu for useful discussions. The second author is partially supported by the Natural Science Foundation of Beijing, China (grant No. 1252008) and NSFC (grant No. 12571067).

\section{Equivariant closed Gromov-Witten theory of $\bP^1$}\label{sec:closedGW}
\subsection{Equivariant cohomology of $\bP^1$}
Let $t\in T=S^1$ act on $\bP^1$ by
\begin{equation*}\label{eq:S1-translation}
     t\cdot [z_1,z_2] = [tz_1,  t^{-1}z_2].
\end{equation*}
The $T$-equivariant cohomology of $\bP^1$ is given by
\[
    H^*_T(\bP^1;\bC) = \bC[H,\sv]/\langle (H+\sv/2)(H-\sv/2)\rangle.
\]
Let $p_1=[1,0]$ and $p_2=[0,1]$ be the $T$-fixed points. Then $H|_{p_1}=-\sv/2$, $H|_{p_2}=\sv/2$. 
The $T$-equivariant Poincaré dual of $p_1$ and $p_2$ are $H-\sv/2$ and $H+\sv/2$, respectively.

Let 
\[
    \phi_1 := -\frac{H-\sv/2}{\sv}, \phi_2:= \frac{H+\sv/2}{\sv} \in H^*_T(\bP^1;\bC)\otimes_{\bC[\sv]}\bC[\sv,\frac{1}{\sv}].
\]
We have
\[
    \phi_1 + \phi_2 = 1, \quad \phi_\alpha\cup\phi_\beta = \delta_{\alpha\beta}\phi_{\alpha},
\]
\[
    (\phi_\alpha, \phi_\beta)_{\bP^1,T} := \int_{\bP^1}\phi_\alpha\cup\phi_\beta = \delta_{\alpha\beta}\int_{\bP^1}\phi_\alpha = \frac{\delta_{\alpha\beta}}{\Delta^\alpha},
    \quad \alpha,\beta\in\{1,2\},
\]
where $$\Delta^1=-\sv, \quad \Delta^2 = \sv.$$

For convenience, we introduce the notations $R_T := \bC[\sv]$ and ${S}_T := \bC[\sv,\frac{1}{\sv}]$, and we 
let $\bar{S}_T$ be the minimal field extension of $S_T$ containing $\{\sqrt{\Delta^\alpha}:\alpha = 1,2\}$. Then
$\{\phi_\alpha:\alpha=1,2\}$ is a canonical basis of the semisimple Frobenius algebra
\[
    (H^*_T(\bP^1;\bC)\otimes_{R_T}\bar{S}_T,\cup,(\cdot,\cdot)_{\bP^1,T})
\]
over $\bar{S}_T$.
Let 
$$\hat{\phi}_\alpha := \sqrt{\Delta^\alpha}\phi_\alpha.$$
Then $\{\hat{\phi}_\alpha:\alpha = 1,2\}$ is the normalized canonical basis of the semisimple Frobenius algebra $H^*_T(\bP^1;\bC)\otimes_{R_T}\bar{S}_T$:
\[
    \hat{\phi}_\alpha\cup\hat{\phi}_\beta =\delta_{\alpha\beta}\sqrt{\Delta^\alpha}\hat{\phi}_\alpha,\quad
    (\hat{\phi}_\alpha,\hat{\phi}_\beta)_{\CP,T} = \delta_{\alpha\beta}. 
\]

\subsection{Equivariant Gromov-Witten invariants of $\bP^1$}
Let $E(\bP^1)$ denote the effective curve classes in $H_2(\bP^1;\bZ)$.
Given nonnegative integers $g, n$ and an effective curve class $\beta\in E(\bP^1)$,
let $\overline{\cM}_{g,n}(\bP^1,\beta)$ be the moduli stack of genus $g$, $n$-pointed, degree $\beta$ stable maps to $\bP^1$.
Let $\ev_i: \overline{\cM}_{g,n}(\bP^1,\beta)\rightarrow \bP^1$ be the evaluation map at the $i$-th marked point. The $T$-action on $\bP^1$ induces an $T$-action 
on $\overline{\cM}_{g,n}(\bP^1,\beta)$ and the evaluation map $\ev_i$ is $T$-equivariant.

For $i=1,\dots,n$, let $\bL_i$ be the $i$-th tautological line bundle over $\overline{\cM}_{g,n}(\bP^1,\beta)$ formed by the cotangent line at the $i$-th marked point.
Define the $i$-th descendant class $\psi_i$ as 
$$
    \psi_i := c_1(\bL_i)\in H^2(\overline{\cM}_{g,n}(\bP^1,\beta);\bQ).
$$

Given $\gamma_1,\dots,\gamma_n\in H^*_T(\bP^1;\bC)$ and nonnegative integers $a_1,\dots, a_n$, we define genus-$g$, degree-$\beta$, $T$-equivariant descendant 
Gromov-Witten invariants of $\bP^1$:
\[
    \langle \tau_{a_1}(\gamma_1)\dots\tau_{a_n}(\gamma_n)\rangle_{g,n,\beta}^{\bP^1,T} := \int_{[\overline{\cM}_{g,n}(\bP^1,\beta)]^\vir}
    \prod_{i=1}^n \psi_i^{a_i}\ev_i^*(\gamma_i) \in \bC[\sv].
\]
The genus-$g$, degree-$\beta$, $T$-equivariant primary 
Gromov-Witten invariants of $\bP^1$ is defined as 
\[
\langle \gamma_1\dots\gamma_n\rangle_{g,n,\beta}^{\bP^1,T} := \langle \tau_{0}(\gamma_1)\dots\tau_{0}(\gamma_n)\rangle_{g,n,\beta}^{\bP^1,T}.
\]
Define the Novikov ring 
\[
    \nov := \bC[\widehat{E(\CP)}] = \Bigg\{\sum_{\beta\in E(\CP)}c_\beta Q^\beta: c_\beta\in\bC\Bigg\},
\]
where $Q$ is the Novikov variable. 
Let $\bt = t^0 1+ t^1H$, we define the following double correlator:
\[
    \llangle \tau_{a_1}(\gamma_1),\dots,\tau_{a_n}(\gamma_n)\rrangle_{g,n}^{\bP^1,T} := 
    \sum_{\beta\in E(\bP^1)}\sum_{m=0}^\infty \frac{Q^\beta}{m!}\langle \tau_{a_1}(\gamma_1),\dots,\tau_{a_n}(\gamma_n)
    ,\bt^m\rangle_{g,n+m,\beta}^{\bP^1,T}.
\]

For $j=1,\dots,n$, introduce formal variables
\[
    {\bf{u}}_j = {\bf{u}}_j(z) = \sum_{a\geq 0}(u_j)_az^a
\]
where $(u_j)_a\in H^*_T(\bP^1)\otimes_{R_T}\bar{S}_T$. Define
\[
\llangle {\bf u}_1,\dots, {\bf u}_n \rrangle_{g,n}^{\bP^1,T} =  \llangle {\bf u}_1(\psi),\dots, {\bf u}_n(\psi)\rrangle_{g,n}^{\bP^1,T}
        = \sum_{a_1,\dots,a_n\geq 0}\llangle (u_1)_{a_1}\psi^{a_1},\dots, (u_n)_{a_n}\psi^{a_n}\rrangle_{g,n}^{\bP^1,T}.
\]

Let $z_1,\dots,z_n$ be formal variables and $\gamma_1,\dots,\gamma_n\in H^*_T(\bP^1)\otimes_{R_T}\bar{S}_T$. Define
\[
    \llangle \frac{\gamma_1}{z_1-\psi},\dots,\frac{\gamma_n}{z_n-\psi}\rrangle_{g,n}^{\bP^1,T} = 
    \sum_{a_1,\dots,a_n\in\bZ_{\geq 0}}\llangle\gamma_1\psi^{a_1},\dots,\gamma_n\psi^{a_n}\rrangle_{g,n}^{\bP^1,T}\prod_{i=1}^{n}z_i^{-a_i-1}.
\]
We use the conventions that
\[
    \langle\frac{\gamma}{z-\psi}\rangle_{0,1,0}^{\bP^1,T} := z\int_{\bP^1}\gamma,
\]
\[
    \langle\frac{\gamma_1}{z-\psi},\gamma_2\rangle_{0,2,0}^{\bP^1,T} := \int_{\bP^1}\gamma_1\cup\gamma_2,
\]
\[
    \langle\frac{\gamma_1}{z_1-\psi_1},\frac{\gamma_2}{z_2-\psi_2}\rangle_{0,2,0}^{\bP^1,T} := \frac{1}{z_1+z_2}\int_{\bP^1}\gamma_1\cup\gamma_2.
\]

\subsection{Equivariant quantum cohomology and Frobenius structure}
The $T$-equivariant quantum cohomology ring $QH_T^*(\bP^1)$ of $\bP^1$ is defined by its genus-zero primary Gromov-Witten invariants. As a $\bC[\sv]$-module, 
$QH_T^*(\bP^1) = H^*_T(\bP^1)$. The ring structure is given by the quantum product $\star$:
\[
    (\gamma_1\star\gamma_2,\gamma_3)_{\CP,T} = \llangle \gamma_1,\gamma_2,\gamma_3\rrangle_{0,3}^{\bP^1,T}.
\]
In other words,
\[
    \gamma_1\star \gamma_2 = \gamma_1\cup\gamma_2 + q\left(\int_{\bP^1}\gamma_1\right)\left(\int_{\bP^1}\gamma_2\right),
\]
where $\cup$ is the cup product in $H^*_T(\bP^1)$ and $q = Qe^{t^1}$. In particular,
\[
    H\star H = \sv^2/4  + q.
\]
The $T$-equivariant quantum cohomology ring of $\bP^1$ is given by
\[
    QH^*_T(\bP^1;\bC) = \bC[H,\sv,q]/\langle (H+\sv/2)\star (H-\sv/2) - q\rangle,
\]
where $\deg H=\deg \sv = 2$ and $\deg q = 4$.

Let 
\[
    \phi_1(q) = \frac{1}{2} - \frac{H}{\sv\sqrt{1+4q/\sv^2}},
\quad
    \phi_2(q) = \frac{1}{2} + \frac{H}{\sv\sqrt{1+4q/\sv^2}}.
\]
Then 
\[
    \phi_\alpha(q)\star \phi_\beta(q) = \delta_{\alpha\beta}\phi_\alpha(q), \quad
    (\phi_\alpha(q), \phi_\beta(q))_{\bP^1,T} = \frac{\delta_{\alpha\beta}}{\Delta^\alpha(q)},
\]
where
\[
    \Delta^1(q) = -\sv\sqrt{1 + 4q/\sv^2}, \quad \Delta^2(q) = -\Delta^1(q).
\]
Let $S := \bar{S}_T[[q]]$. Then
$\{\phi_1(q), \phi_2(q)\}$ is a canonical basis of the semisimple Frobenius algebra 
$$(H^*_T(\bP^1;\bC)\otimes_{R_T} S, \star,(\cdot,\cdot)_{\CP,T}).$$ 
The normalized canonical basis is defined by
\[
    \{ \hat{\phi}_\alpha(q) := \sqrt{\Delta^\alpha(q)}\phi_\alpha(q) : \alpha = 1,2\}.
\]
They satisfy
\[
    \hat{\phi}_\alpha(q) \star \hat{\phi}_\beta(q) = \delta_{\alpha\beta}\sqrt{\Delta^\alpha(q)}\hat{\phi}_\alpha(q), \quad (\hat{\phi}_\alpha(q),\hat{\phi}_\beta(q))_{\bP^1,T} = \delta_{\alpha\beta}.
\]

\subsection{Canonical coordinates and the transition matrix}
Let $\{t^0,t^1\}$ be the flat coordinates with respect to the basis $\{T_0=1,T_1=H\}$, and let $\{u^1,u^2\}$ be the canonical coordinates with respect to the basis $\{\phi_1(q),\phi_2(q)\}$.
Then 
\begin{equation*}
    \begin{aligned}
        \frac{\partial}{\partial u^1} &= \frac{1}{2}\frac{\partial}{\partial t^0} + \frac{1}{\Delta^1(q)}\frac{\partial}{\partial t^1},
        \\
        \frac{\partial}{\partial u^2} &= \frac{1}{2}\frac{\partial}{\partial t^0} + \frac{1}{\Delta^2(q)}\frac{\partial}{\partial t^1}.
    \end{aligned}
\end{equation*}
The canonical coordinates $u^1$ and $u^2$ are characterized by the above equations up to a constant in $\bC[\sv,\frac{1}{\sv}]$. We choose the canonical coordinates $(u^1,u^2)$ 
such that
\[
    \lim _{q\rightarrow 0} (u^1 - t^0 + \sv t^1/2) = 0, \quad \lim_{q\rightarrow 0}(u^2-t^0-\sv t^1/2) = 0.
\]
We use Greek alphabet $\alpha\in\{1,2\}$ to represent canonical coordinates and the Latin alphabet $i\in\{0,1\}$ to represent
flat coordinates. Let $\Psi = (\Psi_i^{\ \alpha})$ be the transition matrix between the flat and quantum normalized canonical bases:
\[
    T_i = \sum_{\alpha\in\{1,2\}}\Psi_i^{\ \alpha}\hat\phi_{\alpha}(q).
\]
It can be rewritten as
\[
    \frac{\partial}{\partial t^i} = \sum_{\alpha\in\{1,2\}}\Psi_i^{\ \alpha}\sqrt{\Delta^\alpha(q)}\frac{\partial}{\partial u^\alpha},
\]
which is equivalent to
\[
    \frac{du^\alpha}{\sqrt{\Delta^\alpha(q)}} = \sum_{i\in\{0,1\}}dt^i\Psi_i^{\ \alpha}.
\]
Then
\[
    \Psi_0^{\ \alpha} = \frac{1}{\sqrt{\Delta^\alpha(q)}},\quad
    \Psi_1^{\ \alpha} = \frac{\sqrt{\Delta^\alpha(q)}}{2}.
\]
Let $\Psi^{-1} = (\Psi^{-1})_\alpha^{\ i}$, so that
\[
    \sum_{i\in\{0,1\}}(\Psi^{-1})_\alpha^{\ i}\Psi_i^{\ \beta} = \delta_\alpha^{\ \beta}.
\]
Then 
\[
    (\Psi^{-1})_\alpha^{\ 0} = \frac{\sqrt{\Delta^\alpha(q)}}{2},\quad
    (\Psi^{-1})_\alpha^{\ 1} = \frac{1}{\sqrt{\Delta^\alpha(q)}}.
\]

\subsection{The $\cS$-operator} For any $a,b\in H^*_T(\bP^1;\bar{S}_T)$, define
the $\cS$-operator by
\[
    (a,\cS(b))_{\bP^1,T} = (a,b)_{\bP^1,T}+\llangle a,\frac{b}{z-\psi}\rrangle_{0,2}^{\bP^1,T}.
\]

We consider several different (flat) bases for $H^*_T(\bP^1;\bar{S}_T)$:
\begin{enumerate}
    \item The natural basis: $T_0=1$ and $T_1=H$.
    \item The basis dual to the natural basis: $T^0 = H$ and $T^1=1$.
    \item The classicial canonical basis $\{\phi_1,\phi_2\}$.
    \item The basis dual to the classicial canonical basis with respect to 
    the $T$-equivariant Poincaré pairing: $\phi^1 = \Delta^1\phi_1$, $\phi^2 = \Delta^2\phi_2$.
    \item The normalized classical basis $\hat\phi_1 = \sqrt{\Delta^1}\phi_1$ and $\hat\phi_2 = \sqrt{\Delta^2}\phi_2$, 
    which is self-dual: $\hat\phi^1 = \hat\phi_1$ and $\hat\phi^2 = \hat\phi_2$.
\end{enumerate}
We also consider several different bases for $H^*_T(\CP;\bar{S}_T)\otimes_{\bar{S}_T} S$:
\begin{enumerate}
    \item The quantum canonical basis $\{\phi_1(q),\phi_2(q)\}$.
    \item The basis dual to the quantun canonical basis with respect to 
    the $T$-equivariant Poincaré pairing: $\phi^1(q) = \Delta^1(q)\phi_1(q)$, $\phi^2(q) = \Delta^2(q)\phi_2(q)$.
    \item The normalized quantum canonical basis $\{\hat\phi_1(q),\hat\phi_2(q)\}$, which is self-dual.
\end{enumerate}
We introduce some additional notations. For $\alpha,\beta\in\{1,2\}$, define
\[
    S^\alpha_{\ \beta} = (\phi^\alpha,\cS(\phi_\beta))_{\bP^1,T},\quad S^{\hat{\underline{\alpha}}}_{\ \beta} = (\hat{\phi}_\alpha(q),\cS(\phi_\beta))_{\bP^1,T}, 
    \quad S^{\hat{\underline{\alpha}}}_{\ \hat{\underline{\beta}}} = (\hat{\phi}_\alpha(q),\cS(\hat{\phi}_\beta(q)))_{\bP^1,T}.
\]
For $\alpha,\beta\in\{1,2\}$ and $i\in\{0,1\}$, define
\[
    S_i^{\ \hat{\alpha}} = (T_i, \cS(\hat{\phi}^\alpha))_{\bP^1,T}.
\]
For $a,b\in H^*_T(\bP^1;\bar{S}_T)$, define
\begin{equation*}
    \begin{aligned}
        S_z(a,b) &= (a,\cS(b))_{\bP^1,T}
        \\
        V_{z_1,z_2}(a,b) &= \frac{(a,b)_{\bP^1,T}}{z_1+z_2} + \llangle\frac{a}{z_1-\psi_1},\frac{b}{z_2-\psi_2}\rrangle_{0,2}^{\bP^1,T}.
    \end{aligned}
\end{equation*}
We have the following identity:
\[
    V_{z_1,z_2}(a,b) = \frac{1}{z_1+z_2}\sum_{\alpha\in\{1,2\}}S_{z_1}(\hat{\phi}_\alpha(q),a)S_{z_2}(\hat{\phi}_\alpha(q),b).
\]

\begin{remark}\rm \label{rmk:Q-is-1}
    Let $t'=t^1H$ and $t''=t^01$. By the divisor equation, we have
    \begin{equation*}
        \begin{aligned}
             & \ (a,b)_{\CP,T} + \llangle a,\frac{b}{z-\psi}\rrangle_{0,2}^{\CP,T} 
             \\
             = 
        & \ (a,be^{t'/z})_{\CP,T} + \sum_{m=0}^{\infty}\sum_{\beta\in E(\CP)\atop {(\beta,m)\neq (0,0)}}
        \frac{Q^\beta e^{\int_\beta t'}}{m!}\langle a,\frac{be^{t'/z}}{z-\psi},(t'')^m\rangle^{\CP,T}_{0,2+m,\beta}.
        \end{aligned}
    \end{equation*}
    The factor $e^{\int_\beta t'}$ plays the role of $Q^\beta$, so the operator $\cS(b)\big|_{Q=1}$ is well-defined.
\end{remark}

\subsection{Equivariant $J$-function}\label{sec:J-function}
The $T$-equivariant $J$-function $J(z)$ is characterized by
\[
    (J(z), a)_{\CP,T} = (1,\cS(a)\big|_{Q=1})_{\bP^1,T},
\]
for any $a\in H^*_T(\bP^1;\bar{S}_T)$. Equivalently, 
\[
    J(z) = 1 + \sum_{\alpha\in\{1,2\}}\llangle 1,\frac{\hat{\phi}_\alpha}{z-\psi}\rrangle_{0,2}^{\bP^1,T}\Big|_{Q=1}\hat{\phi}_\alpha.
\]
By the genus zero mirror theorem \cite{G96,LLY97}, 
\[
    J(z) = e^{(t^0+t^1H)/z}\Bigg(
        1 + \sum_{d=1}^{\infty}\frac{q^d}{\prod_{m=1}^{d}(H+\sv/2+mz)\prod_{m=1}^{d}(H-\sv/2+mz)}
    \Bigg),
\]
where $q=e^{t^1}$.

Let $J(z) = J^1\phi_1+J^2\phi_2$. Then for $\alpha=1,2$, we have 
\begin{equation}\label{eq:J-function}
    \begin{aligned}
        J^\alpha &= e^{(t^0+ t^1\Delta^\alpha/2)/z}\sum_{d=0}^{\infty}
        \frac{q^d}{d!z^d}\frac{1}{\prod_{m=1}^{d}(\Delta^\alpha+mz)}
        \\
        &= e^{(t^0+ t^1\Delta^\alpha/2)/z}\sum_{m=0}^{\infty}\Big(\frac{\sqrt{q}}{z}\Big)^{2m}
        \frac{\Ga(\Delta^\alpha/z+1)}{m!\Ga(\Delta^\alpha/z+m+1)}
        \\
        &= e^{t^0/z}z^{\Delta^\alpha/z}\Ga(\Delta^\alpha/z+1)I_{\Delta^\alpha/z}\Big(\frac{2\sqrt{q}}{z}\Big),
    \end{aligned}
\end{equation}
where the function $I_\alpha(x)$ is the modified Bessel function of first kind in Appendix \ref{sec:Bessel}.

\subsection{The $R$-matrix}Let $U$ denote the diagnoal matrix $\text{diag}(u^1, u^2)$. The results in \cite{Giv98b} imply the following statement.
\begin{theorem}
    There exists a unique matrix power series $R(z) = \one + R_1z + R_2z^2 + \dots$ satisfying the following properties:
    \begin{enumerate}
        \item The entries of each $R_k$ lie in $\bar{S}_T[[q,t^0]]$.
        \item $\tilde{S} = \Psi R(z)e^{U/z}$ is a fundamental solution to the $T$-equivariant big quantum differential equation.
        \item $R$ satisfies the unitary condition $R^T(-z)R(z)=\one$.
        \item For $\alpha,\beta\in\{1,2\}$, we have
        \[
            \lim_{q,t^0 \rightarrow 0} R_{\alpha}^{\ \beta}(z) = \delta_{\alpha\beta}\exp\left(
                -\sum_{n=1}^{\infty} \frac{B_{2n}}{2n(2n-1)}\left(\frac{z}{\Delta^\beta}\right)^{2n-1}
            \right).
        \]  
    \end{enumerate}
\end{theorem}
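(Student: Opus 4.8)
The plan is to apply Givental's theory of semisimple Frobenius manifolds \cite{Giv98b}. The equivariant quantum cohomology of $\bP^1$ is semisimple over $S=\bar{S}_T[[q]]$, with idempotent basis $\{\phi_1(q),\phi_2(q)\}$ and distinct canonical coordinates $u^1\neq u^2$, which is exactly the input the theory requires. First I would write the $T$-equivariant big quantum differential equation in the normalized canonical frame $\{\hat\phi_1(q),\hat\phi_2(q)\}$. There the Dubrovin connection takes the form
$$\nabla = d - \frac{1}{z}\,dU + \Psi^{-1}d\Psi,$$
where $dU=\text{diag}(du^1,du^2)$ and $\Psi^{-1}d\Psi$ is the matrix one-form induced by the change of frame. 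Substituting the ansatz $\tilde{S}=\Psi R(z)e^{U/z}$ into $\nabla\tilde{S}=0$ and cancelling $e^{U/z}$ reduces flatness in the flat directions to
$$dR + \frac{1}{z}[R,dU] + (\Psi^{-1}d\Psi)\,R = 0.$$

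Next I would solve this recursively. Writing $R=\sum_{k\geq 0}R_kz^k$ with $R_0=\one$ and collecting the coefficient of $z^k$ gives
$$[R_{k+1},dU] = -\,dR_k - (\Psi^{-1}d\Psi)\,R_k.$$
Since the canonical coordinates are distinct, the map $X\mapsto[X,dU]$ is invertible on off-diagonal matrices, and the integrability built into the Frobenius structure makes the off-diagonal part of the right-hand side proportional to $du^\beta-du^\alpha$; hence the off-diagonal entries of $R_{k+1}$ are determined algebraically by $R_k$. The diagonal part of the equation contains no $R_{k+1}$ and reads $d(R_k)_{\alpha\alpha}+[(\Psi^{-1}d\Psi)R_k]_{\alpha\alpha}=0$, a first-order differential constraint fixing the diagonal entries of $R_k$ up to an integration constant. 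Tracking this recursion, and using that $\Psi_i^{\ \alpha}=1/\sqrt{\Delta^\alpha(q)},\ \sqrt{\Delta^\alpha(q)}/2$ together with $u^\alpha\to t^0\pm\sv t^1/2$ as $q\to 0$ all have entries in $\bar{S}_T[[q,t^0]]$, yields property (1) by induction.

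To pin down the integration constants and establish property (3), I would use the flatness of the Poincaré pairing. Its compatibility with the Dubrovin connection forces $R^T(-z)R(z)$ to be $z$-independent and covariantly constant; evaluating at $R(0)=\one$ gives $R^T(-z)R(z)=\one$, the unitarity condition. Imposing unitarity order by order then determines the diagonal entries of each $R_k$, removing the remaining ambiguity, so that together with the recursion above $R(z)$ is unique.

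The main obstacle is property (4), the identification of the $q,t^0\to 0$ limit. Here I would extract the asymptotics of $R$ from the explicit flat section given by the $J$-function components \eqref{eq:J-function},
$$J^\alpha = e^{t^0/z}\,z^{\Delta^\alpha/z}\,\Ga(\Delta^\alpha/z+1)\,I_{\Delta^\alpha/z}\!\left(\frac{2\sqrt{q}}{z}\right).$$
As $q,t^0\to 0$ the Bessel factor tends to $1$, and the nontrivial asymptotics come entirely from the Stirling (Euler--Maclaurin) expansion of the Gamma factor: with $s=\Delta^\alpha/z$,
$$\log\Ga(s+1)\sim\Big(s+\tfrac12\Big)\log s - s + \tfrac12\log(2\pi) + \sum_{n\geq 1}\frac{B_{2n}}{2n(2n-1)}\,s^{1-2n}.$$
Once the prefactors $e^{U/z}$ and $\Psi$ (which carry the exponential and the half-integer powers of $\Delta^\alpha/z$) are stripped off in the ansatz $\Psi Re^{U/z}$, the surviving tail $\sum_{n\geq 1}\frac{B_{2n}}{2n(2n-1)}(z/\Delta^\alpha)^{2n-1}$ is precisely the exponent in (4), up to the sign recorded there. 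The delicate points are the bookkeeping of the $\Psi$-normalization and the branch of $\sqrt{\Delta^\alpha}$, and checking that the off-diagonal entries of $R$ vanish as $q\to 0$ so that the limit is diagonal; the latter reflects that at $q=0$ the two fixed points decouple and $R$ factors into two copies of the $R$-matrix of the Gromov--Witten theory of a point.
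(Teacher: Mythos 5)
The paper offers no proof of this statement at all: it is quoted verbatim from Givental's theory, with the one-line justification ``The results in \cite{Giv98b} imply the following statement.'' So your attempt has to stand on its own, and it contains two genuine gaps.

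First, your uniqueness argument fails. Unitarity does \emph{not} determine the diagonal entries of every $R_k$. If $R$ satisfies properties (1)--(3), then so does $R\exp\bigl(\sum_{k\ \mathrm{odd}}a_kz^k\bigr)$ for arbitrary diagonal matrices $a_k$ with entries in $\bar{S}_T$ independent of $q,t^0$: right multiplication by a constant diagonal matrix commutes with $e^{U/z}$, hence sends fundamental solutions to fundamental solutions, preserves (1), and preserves unitarity precisely because the exponent is odd in $z$. Indeed, the diagonal part of the order-$k$ coefficient of $R^T(-z)R(z)=\one$ is $\bigl((-1)^k+1\bigr)(R_k)_{\alpha\alpha}+(\text{lower order terms})$, so unitarity fixes the diagonal of $R_k$ only for even $k$; the odd-order integration constants remain completely free. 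Uniqueness therefore \emph{needs} property (4): two solutions of (1)--(3) differ by such an odd diagonal exponential, and only the prescribed limit at $q=t^0=0$ forces that factor to be $\one$.

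Second, and more seriously, your derivation of property (4) would produce the wrong answer. By the paper's formula \eqref{eq:J-function}, at $q=0$ only the $d=0$ term survives, so $J^\alpha=e^{(t^0+t^1\Delta^\alpha/2)/z}=e^{u^\alpha/z}$ \emph{exactly}: the Gamma factor and the Bessel factor cancel identically. (The Bessel factor does not tend to $1$; with $\nu=\Delta^\alpha/z$ one has $\Gamma(\nu+1)I_{\nu}(2\sqrt{q}/z)\to(\sqrt{q}/z)^{\nu}$, which is absorbed into $e^{u^\alpha/z}$.) Consequently the tail you would read off from the $J$-function tends to $1$ as $q,t^0\to 0$, not to the Bernoulli series, i.e.\ your method ``proves'' $\lim R=\one$, contradicting (4). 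The sign you wave away is the symptom: the exponent in (4) is the \emph{reciprocal} of the Stirling tail of $\Gamma(\Delta^\beta/z+1)$, so it cannot come from that Gamma factor. What is really going on is that the fundamental solution built from the $\cS$-operator (equivalently from the $J$-function) corresponds to the trivially normalized $R$, while the $R$ of the theorem differs from it by the constant diagonal factor $\exp\bigl(-\sum_{n\geq1}\tfrac{B_{2n}}{2n(2n-1)}(z/\Delta^\beta)^{2n-1}\bigr)$ allowed by the ambiguity described above; this is exactly the normalization needed for the graph-sum formula of Theorem \ref{thm:descendant-graph-sum} to hold with pure $\psi$-integrals at the vertices. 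Its justification in the cited work \cite{Giv98b} comes from fixed-point localization together with Mumford's Hodge-integral formula (equivalently, from stationary-phase asymptotics in which the $z$-expansion and the limit $q\to0$ do not commute, the critical point $P_1$ of the mirror superpotential running into the puncture $Y=0$) --- not from flatness plus the $J$-function. A repaired elementary proof of the literal statement is possible: solve your recursion, observe that at $q=t^0=0$ the off-diagonal entries vanish and the diagonal constraint is vacuous, and then \emph{impose} (4) as the choice of integration constants (it is compatible with unitarity since its exponent is odd in $z$); but then (4) is an input normalization, not something extracted from Stirling's formula.
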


\subsection{The graph sum formula for descendant Gromov-Witten potential}\label{sec:A-model-graph-sum}
In this subsection, we introduce the graph sum formula for the generating functions of descendant
Gromov-Witten invariants. Given a connected graph $\Gamma$, we introduce the following notation.
\begin{enumerate}
    \item $V(\Gamma)$ is the set of vertices in $\Gamma$.
    \item $E(\Gamma)$ is the set of edges in $\Gamma$.
    \item $H(\Gamma)$ is the set of half-edges in $\Gamma$.
    \item $L^o(\Ga)$ is the set of ordinary leaves in $\Ga$. The ordinary leaves are ordered:
    $L^o(\Ga)=\{l_1,\dots,l_n\}$ where $n$ is the number of ordinary leaves.
    \item $L^1(\Ga)$ is the set of dilaton leaves in $\Ga$. The dilaton leaves are unordered. 
\end{enumerate}
Withe the above notation, we introduce the following labels:
\begin{enumerate}
    \item (genus) $g: V(\Ga)\rightarrow\bZ_{\geq 0}$.
    \item (marking) $\beta: V(\Gamma)\rightarrow\{1,2\}$. This induces $\beta: L(\Ga)=L^o(\Gamma)\cup L^1(\Gamma)
    \rightarrow \{1,2\}$, as follows: if $l\in L(\Gamma)$ is a leaf attached to a vertex $v\in V(\Gamma)$, define
    $\beta(l) = \beta(v)$.
    \item (height) $k: H(\Gamma)\rightarrow \bZ_{\geq 0}$.
\end{enumerate}
Given an edge $e$, let $h_1(e)$ and $h_2(e)$ be the two half-edges associated to $e$. The order of the two half-edges
does not affect the graph sum formula in this paper. Given a vertex $v\in V(\Gamma)$, let $H(v)$ denote the set of half-edges
emanating from $v$. The valency of the vertex $v$ is equal to the cardinality of the set $H(v)$: $\val(v)= |H(v)|$. 
A labeled graph $\vec{\Ga} = (\Ga,g,\beta,k)$ is \emph{stable} if 
\[
    2g(v)-2 +\val(v)>0
\] 
for all $v\in V(\Ga)$.

Let ${\bf\Gamma}(\bP^1)$ denote the set of all stable labeled graphs $\vec{\Gamma}=(\Gamma, g,\beta,k)$.
The genus of a stable labeled graph $\vec{\Gamma}$ is defined to be
\[
    g(\vec{\Gamma}) := \sum_{v\in V(\Ga)} g(v) + |E(\Gamma)| - |V(\Ga)| + 1
    = \sum_{v\in V(\Ga)}(g(v)-1)+ (\sum_{e\in E(\Ga)}1) + 1.
\]
Define 
\[
    {\bf \Ga}_{g,n}(\bP^1) = \{\vec{\Ga}=(\Ga,g,\beta,k)\in {\bf\Ga}(\bP^1) : 
    g(\vec{\Ga})=g, |L^o(\Ga)| = n\}.
\]
We assign weights to leaves, edges, and vertices of a labeled graph $\vec{\Gamma}\in {\bf \Ga}(\bP^1)$ as follows.
\begin{enumerate}
    \item \emph{Ordinary leaves}. To each ordinary leaf $l_j \in L^o(\Gamma)$ with $\beta(l_j)=\beta\in\{1,2\}$
        and $k(l_j)=k\in\bZ_{\geq 0}$, we assign the following descendant weight:
        \[
            (\cL^{\bf u})^\beta_k(l_j) = [z^k](\sum_{\alpha,\gamma \in \{1,2\}}\left(
                \frac{{\bf u}^\alpha_j(z)}{\sqrt{\Delta^\alpha(q)}}S^{\hat{\underline{\gamma}}}_{\ \hat{\underline{\alpha}}}(z)
            \right)_+ R(-z)_\gamma^{\ \beta}).
        \]
    where $(\cdot)_+$ means taking the nonnegative powers of $z$.
    \item \emph{Dilaton leaves}. To each dilaton leaf $l\in L^1(\Ga)$ with $\beta(l)=\beta \in \{1,2\}$ and 
        $2\leq k(l)=k\in\bZ_{\geq 0}$, we assign
        \[
            (\cL^1)^\beta_k(l) = [z^{k-1}](-\sum_{\alpha\in \{1,2\}}\frac{1}{\sqrt{\Delta^\alpha(q)}}R_\alpha^{\ \beta}(-z)).
        \] 
    \item \emph{Edges}. To an edge connecting a vertex marked by $\alpha\in\{1,2\}$ and a vertex marked by $\beta\in\{1,2\}$,
    and with heights $k$ and $l$ at the corresponding half-edges, we assign
        \[
            \cE^{\alpha,\beta}_{k,l} = [z^kw^l]\Big(
                \frac{1}{z+w}(\delta_{\alpha,\beta}-\sum_{\gamma\in \{1,2\}}R_\gamma^{\ \alpha}(-z)R_\gamma^{\ \beta}(-w))
            \Big).
        \]
    \item \emph{Vertices}. To a vertex $v$ with genus $g(v)=g\in \bZ_{\geq 0}$ and with marking $\beta(v)=\beta$,
        with $n$ ordinary leaves and half-edges attached to it with heights $k_1,\dots,k_n\in\bZ_{\geq 0}$ and $m$ more dilaton
        leaves with heights $k_{n+1},\dots,k_{n+m}\in\bZ_{\geq 0}$, we assign
        \[
            \Big(\sqrt{\Delta^\beta(q)}\Big)^{2g(v)-2+\val(v)}\langle\tau_{k_1}\dots\tau_{k_{n+m}}\rangle_g,
        \]
        where $\langle\tau_{k_1}\dots\tau_{k_{n+m}}\rangle_g = \int_{\overline{\cM}_{g,n+m}}\psi_1^{k_1}\dots\psi_{n+m}^{k_{n+m}}$.
\end{enumerate}
We define the weight of a labeled graph $\vec{\Ga}\in {\bf\Ga}_{g,n}(\bP^1)$ to be 
\begin{equation*}
    \begin{aligned}
        \omega^{\bf u}_A(\vec{\Ga}) = &\prod_{v\in V(\Ga)}\Big(\sqrt{\Delta^{\beta(v)}(q)}\Big)^{2g(v)-2+\val(v)}\langle\prod_{h\in H(v)}\tau_{k(h)}\rangle_{g(v)}
        \prod_{e\in E(\Ga)}\cE^{\beta(v_1(e)),\beta(v_2(e))}_{k(h_1(e)),k(h_2(e))}
        \\
        & \cdot \prod_{l\in L^1(\Ga)}(\cL^1)^{\beta(l)}_{k(l)}(l)\prod_{j=1}^n(\cL^{\bf u})^{\beta(l_j)}_{k(l_j)}(l_j).
    \end{aligned}
\end{equation*}
With the about definition of the weight of a labeled graph, we have the following theorem which expresses the $T$-equivariant descendant
Gromov-Witten potential of $\bP^1$ in terms of graph sum.
\begin{theorem}[\cite{Giv98b}]\label{thm:descendant-graph-sum}
    Suppose that $2g-2+n > 0$. Then
    \[
        \llangle{\bf u}_1, \dots, \bu_n\rrangle_{g,n}^{\bP^1,T} = \sum_{\vec{\Ga}\in{\bf \Ga}_{g,n}(\bP^1)}
        \frac{\omega^\bu_A(\vec{\Ga})}{|\Aut(\vec{\Ga})|}.
    \]
\end{theorem}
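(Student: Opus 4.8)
The plan is to derive this formula from Givental's quantization formalism for the total descendant potential of a semisimple Frobenius manifold \cite{Giv98b}. The essential input is that $QH^*_T(\bP^1)$ is generically semisimple: the idempotents $\phi_1(q),\phi_2(q)$ diagonalize the quantum product, and since $\Delta^1(q)=-\Delta^2(q)\neq 0$ the canonical coordinates $u^1,u^2$, the transition matrix $\Psi$, and the $R$-matrix of the previous subsections are all well defined. By Givental's formula (established in the semisimple Gromov--Witten setting via the Givental--Teleman classification of semisimple CohFTs), the total ancestor potential of $\bP^1$ is obtained from a product of copies of the Witten--Kontsevich tau function, one attached to each idempotent $\phi_\alpha(q)$, by applying the quantized symplectic operators $\hat{\Psi}$ and $\hat{R}$.

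First I would write the total ancestor potential as $\hat{\Psi}\,\hat{R}\,\prod_{\alpha=1,2}\cD_{\pt}^{(\alpha)}$, where each $\cD_{\pt}^{(\alpha)}$ is a copy of the point descendant (KdV) potential whose genus-$g$, $n$-point correlators are the pure $\psi$-integrals $\langle\tau_{k_1}\cdots\tau_{k_n}\rangle_g=\int_{\Mbar_{g,n}}\psi_1^{k_1}\cdots\psi_n^{k_n}$. The quantization $\hat{R}$ of the upper-triangular transformation $R(z)$ acts as the exponential of a quadratic Hamiltonian; applying it to a product of tau functions and expanding the resulting Gaussian integral by Wick's theorem produces a sum over graphs. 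In this expansion the quadratic (contraction) part of $\hat{R}$ furnishes the edge propagator, its linear part acting on the dilaton shift $-\one z$ furnishes the dilaton leaves, and the genuine vertices are the KdV correlators. The factor $\hat{\Psi}$ rotates the normalized canonical frame into the flat frame, and the semisimple topological field theory scaling in that frame accounts for the weight $(\sqrt{\Delta^{\beta(v)}(q)})^{2g(v)-2+\val(v)}$ carried by each vertex.

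Second I would pass from ancestors to descendants. The descendant correlators $\llangle\bu_1,\dots,\bu_n\rrangle_{g,n}^{\bP^1,T}$ differ from the ancestor correlators by the action of the $\cS$-operator, which dresses each external insertion. Concretely, each ordinary leaf acquires the factor built from $S^{\hat{\underline{\gamma}}}_{\ \hat{\underline{\alpha}}}(z)$ composed with $R(-z)$ and truncated to nonnegative powers of $z$ by $(\cdot)_+$, which is exactly the weight $(\cL^{\bu})^\beta_k$ assigned to an ordinary leaf. Tracking the two half-edges of each edge, the heights $k(h)$ (the powers of the formal variables $z,w$), and the genus and valence of each vertex, the Wick contractions reproduce term by term the edge weight $\cE^{\alpha,\beta}_{k,l}$, the dilaton weight $(\cL^1)^\beta_k$, and the vertex weight; summing over all stable labeled graphs $\vec{\Ga}\in{\bf\Ga}_{g,n}(\bP^1)$, with $1/|\Aut(\vec{\Ga})|$ accounting for the symmetry of the Wick contractions, yields the claimed identity under the stability hypothesis $2g-2+n>0$.

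The main obstacle is bookkeeping rather than conceptual: one must match the conventions of the abstract quantization formula with the explicit weights stated here, namely the placement of $R(-z)$ versus $R(z)$, the overall signs, the $(\cdot)_+$ truncation on the ordinary leaves, the dilaton-shift sign in $(\cL^1)^\beta_k$, and the precise exponent $2g(v)-2+\val(v)$ at each vertex. The unitarity condition $R^T(-z)R(z)=\one$ from the $R$-matrix theorem guarantees that the propagator $\frac{1}{z+w}(\delta_{\alpha\beta}-\sum_{\gamma}R_\gamma^{\ \alpha}(-z)R_\gamma^{\ \beta}(-w))$ is regular at $z+w=0$, so that the coefficient extraction $[z^kw^l]$ defining $\cE^{\alpha,\beta}_{k,l}$ makes sense. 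Once this dictionary is fixed, the theorem is Givental's Feynman-graph expansion specialized to $\bP^1$.
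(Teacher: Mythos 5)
Your proposal is correct and takes essentially the same route as the paper, which does not prove this statement at all but imports it directly from Givental's work \cite{Giv98b}: the quantization formula $\hat{\Psi}\hat{R}\prod_{\alpha}\cD_{\pt}^{(\alpha)}$ for the total ancestor potential, the Wick/Feynman expansion producing vertices, edges, and dilaton leaves, and the descendant--ancestor dressing of ordinary leaves by the $\cS$-operator composed with $R(-z)$ are precisely the content of that reference (and of its elaborations in \cite{DOSS,FLZ17}). Your sketch matches the weights $\cE^{\alpha,\beta}_{k,l}$, $(\cL^1)^\beta_k$, $(\cL^{\bu})^\beta_k$, and the vertex factor $(\sqrt{\Delta^{\beta(v)}(q)})^{2g(v)-2+\val(v)}$ as stated, so nothing further is needed.
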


\section{Equivariant open Gromov-Witten theory of $(\bP^1,\bR\bP^1)$ via integration over the fixed locus}\label{sec:openGW}
\subsection{Moduli spaces of stable maps to ($\bP^1,\bR\bP^1$)}
Let 
$$   
    L := \{[e^{{\rm i}\varphi},e^{-{\rm i}\varphi}]\in\CP: \varphi\in\bR\}
$$
be the Lagrangian submanifold of $\CP$, which is preserved by the $T$-action. By taking a M\"obius transform, we can identify the pair $(\CP,L)$ with $(\CP,\RP)$. Let $D_1$ and $D_2$ be the two hemispheres centered at $p_1$ and $p_2$ respectively.
Then we have
\[
    H_2(\bP^1,\bR\bP^1) = \bZ[D_1]\oplus \bZ[D_2].
\]
Sometimes, we identify the relative homology group $H_2(\bP^1,\bR\bP^1)$ to $\bZ^2$ so that the generators $(1,0)$ and $(0,1)$ represent $D_1$ and $D_2$ 
respectively. 
The relative homology group $H_2(\bP^1,\bR\bP^1)$ could also be viewed as the extension of the homology groups $H_2(\bP^1)$ and $H_1(\bR\bP^1)$.
Consider the short exact sequence
\begin{equation}
    \begin{tikzcd}\label{eqn:relative-homology}
        0 \arrow[r] & H_2(\bP^1)= \bZ[\bP^1] \arrow[r, "\delta"] & H_2(\bP^1,\bR\bP^1) \arrow[r, "\partial"] & H_1(\bR\bP^1)= \bZ[\RP] \arrow[r] & 0.
    \end{tikzcd}
\end{equation}
For $d[\bP^1]\in H_2(\bP^1)$, the map $\delta:H^2(\bP^1)\rightarrow H_2(\bP^1,\bR\bP^1)$ is defined by $\delta(d[\bP^1])=(d,d)$.
The connecting map $\partial:H_2(\bP^1,\bR\bP^1)\rightarrow H_1(\bR\bP^1)$ is given by $\partial(a,b) = (b-a)[\RP]$. 

Let $(\Si,\partial\Si)$ be a prestable bordered Riemann surface, and let $\partial\Si=R_1\cup\dots\cup R_h$.
The (small) genus $g(\Si)$ of $\Si$ is the genus of the closed Riemann surface obtained by capping each boundary component of $\Si$ with a disk.
A genus-$g$ bordered Riemann surface with $h$ boundary components is called a Riemann surface of topological type $(g,h)$.

The \emph{double} $\Si_\bC$ of the prestable bordered Riemann surface $(\Si,\partial\Si)$
is obtained by gluing the surface $\Si$ and the conjugate surface $\bar{\Si}$ along the boundary $\partial\Si$ by the Schwartz reflection principle.
The Riemann surface $\Si_\bC$ is endowed with a canonical involution $b:\Si_\bC\rightarrow\Si_\bC$.

We concern about the maps $u:(\Si,\partial\Si)\rightarrow
(\bP^1,\bR\bP^1)$.
The \emph{degree} of the map $u$ is the element 
\[
    \vec{d} = (d_-,d_+)= u_*[\Si]\in H_2(\bP^1,\RP).
\]
Let
\[
\mu_i[\RP]= (u\mid_{R_i})_*[R_i]\in H_1(\RP)=\bZ[\RP],\quad i=1,\cdots,h.
\]
The number $\mu_i$ is called the $i$-th winding number. Let $\beta'= u_*[\Si]\in H_2(\bP^1,\RP)$ and let $\vec{\mu} = (\mu_1,\dots,\mu_{h})\in\bZ_{\neq 0}^h$. Then there exists $d\in\bZ_{\geq 0}$
such that 
$$\beta' = d[\bP^1] + \sum_{i=1}^{h} \mu_i[\RP].$$
By the short exact sequence \eqref{eqn:relative-homology}, we have
\[
        d_+-d_- = \sum_{j=1}^{h}\mu_j,
\]
\[
        d = d_+ - \sum_{\{j:\mu_j > 0\}}\mu_j = d_- +\sum_{\{j:\mu_j < 0\}}\mu_j.
\]

Let $(\Si,\partial\Si,x_1,\dots,x_n)$ be a prestable bordered Riemann surface with $n$ interior marked points,
and consider stable maps:
\[
    u: (\Si,\partial\Si,x_1,\dots,x_n)\rightarrow (\bP^1,\bR\bP^1).
\]

Let $\overline{\cM}_{(g,h),n}(\CP,\RP | \beta',\vec{\mu})$ be the moduli space of degree $\beta'$ stable maps to $(\bP^1,\bR\bP^1)$ from type $(g,h)$ bordered Riemann surfaces with $n$ interior marked points, such that
the winding numbers are given by $\mu_i\in\bZ$.

\subsection{Equivariant open Gromov-Witten invariants}\label{sec:virOGW}
Let $\gamma_1,\dots,\gamma_n\in H^*_T(\bP^1;\bC)$, $\beta'\in H_2(\bP^1,\RP)$.
The $T$-action on $\CP$ induces a $T$-action on the moduli space $\overline{\cM}_{(g,h), n}(\bP^1,\bR\bP^1|\beta',\vec{\mu})$.
Let $F:=\overline{\cM}_{(g,h), n}(\bP^1,\bR\bP^1|\beta',\vec{\mu})^T$ be the $T$-fixed locus. The evaluation map $\ev_i: \overline{\cM}_{(g,h), n}(\bP^1,\bR\bP^1|\beta',\vec{\mu})\to \bP^1$ is $T$-equivariant.

For $i=1,\dots,n$, let $\bL_i$ be the $i$-th tautological line bundle over $\overline{\cM}_{(g,h), n}(\bP^1,\bR\bP^1|\beta',\vec{\mu})$ formed
by the cotangent line at the $i$-th marked point. Define the $i$-th descendant class $\psi_i$ as
$$
\psi_i := c_1(\bL_i)\in H^2(\overline{\cM}_{(g,h), n}(\bP^1,\bR\bP^1|\beta',\vec{\mu});\bQ).
$$
We choose a $T$-equivariant
lift $\psi_i^{T}\in H^2_{T}(\overline{\cM}_{(g,h), n}(\bP^1,\bR\bP^1|\beta',\vec{\mu});\bQ)$
of $\psi_i$. 
We define the $T$-equivariant open Gromov-Witten invariants of $(\bP^1,\bR\bP^1)$ \textit{via integration over the fixed locus}:
\[
    \langle \tau_{a_1}(\gamma_1)\dots\tau_{a_n}(\gamma_n)\rangle^{\OGW}_{g,\beta',\vec{\mu}}
    := \int_{[F]^\vir}
    \frac{\prod_{i=1}^n (\psi_i^T)^{a_i}\ev_i^*(\gamma_i)|_F}{e_T(N^\vir)} \in \bC[\sv,\frac{1}{\sv}],
\]
where $[F]^\vir$ is the virtual fundamental class of $F$, and $N^\vir$ is the virtual normal bundle of $F$.  Since $F$ is a compact orbifold without boundary, the above integral is well-defined.

\subsection{Virtual localization formula}
\begin{defn}[Decorated graphs (a)]
    Let $n\in\bZ_{\geq 0}$ and $\beta'=d[\bP^1]+\sum_{i=1}^{h}\mu_i[\RP]\in H_2(\CP,\RP)$. A genus $g$, $n$-pointed,
    degree $\beta'$ decorated graph for $(\CP,\RP)$ is a tuple $\vec{\Ga}=(\Ga,\vec{f},\vec{d}, \vec{g},\vec{s})$ consisting of 
    the following data.
    \begin{enumerate}
        \item $\Gamma$ is a compact, connected 1-dimensional CW complex. Let $V(\Ga)\sqcup V_\circ(\Ga)$ denote
        the set of vertices in $\Ga$, denoted by $\bullet$ and $\circ$, respectively. The $\circ$ vertex is univalent.
        Let $E(\Ga)$ denote the set of edges, where an edge $e$ is a line connecting two $\bullet$ vertices.
        Let $H(\Ga) = \{h_j: j=1,\dots,h\}$ denote the set of half-edges, where a half-edge $h_j$ is a line connecting one $\bullet$ vertex and one $\circ$ vertex.
        We require the half-edges to be ordered. 
        Let $F(\Ga)$ be the set of flags:
        \[
            \{(e,v)\in E(\Ga)\times V(\Ga):v\in e\} \cup \{(h_j,v)\in H(\Ga)\times V(\Ga):v\in h_j\}.
        \]
        For each $v\in V(\Ga)$, let $F_v$ (\emph{resp.} $E_v$, $H_v$) denote the flags (\emph{resp.} edges, half-edges) attached to $v$, and let $\val(v)=|F_v|$ denote the number of flags incident to $v$.
        \item The \emph{label map} $\vec{f}: V(\Ga)\rightarrow \{1,2\}$ labels each $\bullet$ with a number.
            If $v_1,v_2\in V(\Ga)$ are connected by an edge, we require
            $\vec{f}(v_1)\neq \vec{f}(v_2)$.
        \item The \emph{degree map} $\vec{d}:E(\Ga)\cup H(\Ga)\rightarrow \bZ_{>0}$ sends an edge $e$ (\emph{resp.} a half-edge $h_j$) to a positive integer $\vec{d}(e)=d_e$ (\emph{resp.} $\vec{d}(h_j)=d_{h_j}$).
        \item The \emph{genus map} $\vec{g}:V(\Ga)\rightarrow\bZ_{\geq 0}$ sends a vertex $v\in V(\Ga)$ to a nonnegative integer $g_v$.
        \item The \emph{marking map} $\vec{s}:\{1,2,\dots,n\}\rightarrow V(\Ga)$. For each $v\in V(\Ga)$, define $S_v := \vec{s}^{-1}(v)$, and $n_v=|S_v|$.
    \end{enumerate}
    The data is required to satisfy the following conditions:
    \begin{itemize}
        \item [(i)] (genus) $g = \sum_{v\in V(\Ga)}g_v + |E(\Ga)| - |V(\Ga)| + 1$.
        \item [(ii)] (degree) $d = \sum_{e\in E(\Ga)}d_e$.
        \item [(iii)] (winding numbers) Let $h_j\in H(\Ga)$ be the $j$-th half-edge, and let $v_j\in V(\Ga)$ be its incident $\bullet$ vertex, then 
                $\mu_j = (-1)^{\vec{f}(v_j)}\vec{d}(h_j)$.
    \end{itemize}
    Let $G_{g,n}(\CP,\RP|\beta',\vec{\mu})$ be the set of all decorated graphs $\vec{\Ga}=(\Ga,\vec{f},\vec{d},\vec{g},\vec{s})$ satisfying the above constraints. 
\end{defn}

Given a decorated graph $\vec{\Ga}\in G_{g,n}(\CP,\RP|\beta',\vec{\mu})$. Let $V^S(\Ga)$ be the set of stable vertices:
\[
    V^S(\Ga) := \{v\in V(\Ga):2g_v-2+\val(v)+n_v >0\}.
\]

\noindent
Let $\Aut(\vec{\Ga})$ denote the group of automorphisms of $\vec{\Ga}$, and let $A^0_{\vec{\Ga}}$ be the group of covering automorphisms
\[
    A^0_{\vec{\Ga}} = \prod_{e\in E(\Ga)}\bZ_{\vec{d}(e)}\times\prod_{h_j\in H(\Ga)}\bZ_{\vec{d}(h_j)}.
\]
\noindent
The automorphism group $A_{\vec{\Ga}}$ fits in the short exact sequence: 
\[
    1 \rightarrow A^0_{\vec{\Ga}}
    \rightarrow A_{\vec{\Ga}}\rightarrow \Aut(\vec{\Ga})\rightarrow 1.
\]
\noindent
We set 
\[
    \overline{\cM}_{\vec{\Ga}} := \prod_{v\in V^S(\Ga)}\overline{\cM}_{g_v,F_v\cup S_v},
\quad
    F_{\vec{\Ga}} := [\overline{\cM}_{\vec{\Ga}}/A_{\vec{\Ga}}].
\]

Every decorated graph $\vec{\Ga}\in G_{g,n}(\CP,\RP|\beta',\vec{\mu})$ represents a topological type of the $T$-invariant stable open map $u:(\Si,\partial\Si,x_1,\dots,x_n)\rightarrow (\CP,\RP)$ in the following way:
\begin{enumerate}
    \item Every $v\in V(\Ga)$ represents a stable curve or a single point $\Si_v$ in the domain curve $\Si$. 
    If $\Si_v$ is a stable curve, it is of genus-$g_v$ with marked points $S_v$.
    The map $u$ contracts $\Si_v$ to the fixed-point $p_{\vec{f}(v)}$.
    \item Every half-edge $h_j\in H(\Ga)$ represents a disk $D_j$ in the domain curve $\Si$. 
    The map $u|_{D_j}$ is a standard degree $\vec{d}(h_j)=|\mu_j|$ cover of disk branched at $p_{\vec{f}(v_j)}$, where $v_j$ is the $\bullet$ vertex attached at $h_j$.
    \item Every edge $e\in E(\Ga)$ represents a sphere $\CP\cong C\subset \Si$,
    so that $u|_C$ is a degree $\vec{d}(e)=d_e$ cover of $\CP$ branched at $p_1,p_2$. 
\end{enumerate}

Vice versa, given a $T$-invariant stable open map $u$, one can read off its decorated graph $\vec{\Gamma}$. Therefore, we can use the decorated graphs to 
describe the connected components of the 
$T$-fixed locus of the moduli space $\overline{\cM}_{(g,h), n}(\bP^1,\bR\bP^1|\beta',\vec{\mu})$:
\begin{equation}\label{eq:fixed-locus}
    \overline{\cM}_{(g,h), n}(\bP^1,\bR\bP^1|\beta',\vec{\mu})^T = \bigcup_{\vec{\Ga} \in G_{g,n}(\CP,\RP|\beta',\vec{\mu})}
    F_{\vec\Ga}.
\end{equation}

\begin{figure}[H]
\tikzset{every picture/.style={line width=0.75pt}} 

\begin{tikzpicture}[x=0.75pt,y=0.75pt,yscale=-1,xscale=1]

\draw    (153.19,60.5) -- (200.76,77) ;
\draw    (153.69,99) -- (200.76,78.18) ;
\draw    (153.19,158.5) -- (199.69,138) ;
\draw    (301.26,137.68) -- (339.26,158.18) ;
\draw    (199.76,77) .. controls (208.69,63.23) and (292.83,63.23) .. (301.76,77) ;
\draw    (199.76,77) .. controls (218.29,90.03) and (283.23,90.03) .. (301.76,77) ;
\draw    (199.69,136.5) .. controls (214.69,134.63) and (277.09,111.43) .. (301.76,77.68) ;
\draw    (199.69,138) .. controls (221.49,143.03) and (258.29,143.83) .. (299.26,138.68) ;

\draw   (429.4,58.01) .. controls (433.86,52.81) and (444.42,51.74) .. (453,55.62) .. controls (461.57,59.49) and (464.91,66.85) .. (460.45,72.05) .. controls (456,77.24) and (445.44,78.31) .. (436.86,74.44) .. controls (428.29,70.56) and (424.95,63.2) .. (429.4,58.01) -- cycle ;
\draw    (442.15,57.37) .. controls (436.67,62.88) and (443,68.28) .. (453.24,65.59) ;
\draw    (440.52,60.37) .. controls (445.63,60.42) and (448.48,62.76) .. (449.4,66.25) ;
\draw   (519.85,78.96) .. controls (511.15,74.11) and (508.81,63.7) .. (514.64,55.73) .. controls (520.46,47.76) and (532.23,45.23) .. (540.93,50.09) .. controls (549.63,54.95) and (551.96,65.35) .. (546.14,73.32) .. controls (540.32,81.3) and (528.55,83.82) .. (519.85,78.96) -- cycle ;
\draw    (524.79,64.23) .. controls (519.99,69.47) and (525.63,74.49) .. (534.67,71.87) ;
\draw    (523.37,67.07) .. controls (527.89,67.07) and (530.43,69.25) .. (531.28,72.53) ;
\draw    (528.36,52.77) .. controls (523.56,58.01) and (529.21,63.03) .. (538.24,60.41) ;
\draw    (526.95,55.61) .. controls (531.46,55.61) and (534,57.79) .. (534.85,61.07) ;
\draw   (421.44,115.03) .. controls (426.19,108.03) and (439.22,106.77) .. (450.55,112.21) .. controls (461.88,117.65) and (467.22,127.74) .. (462.47,134.74) .. controls (457.72,141.75) and (444.69,143.01) .. (433.36,137.57) .. controls (422.03,132.13) and (416.7,122.04) .. (421.44,115.03) -- cycle ;
\draw    (440.56,113.96) .. controls (436.79,118.25) and (441.23,122.36) .. (448.33,120.21) ;
\draw    (439.46,116.28) .. controls (443,116.28) and (445,118.07) .. (445.67,120.75) ;
\draw    (430.27,121.63) .. controls (426.5,125.91) and (430.93,130.02) .. (438.03,127.88) ;
\draw    (429.16,123.95) .. controls (432.71,123.95) and (434.7,125.74) .. (435.37,128.42) ;
\draw    (446.12,125.59) .. controls (442.35,129.88) and (446.79,133.99) .. (453.89,131.84) ;
\draw    (445.01,127.91) .. controls (448.56,127.91) and (450.56,129.7) .. (451.22,132.38) ;
\draw   (518.3,114.81) .. controls (521.48,108.73) and (529.83,106.99) .. (536.94,110.92) .. controls (544.06,114.84) and (547.26,122.95) .. (544.08,129.03) .. controls (540.9,135.1) and (532.55,136.85) .. (525.43,132.92) .. controls (518.31,129) and (515.12,120.89) .. (518.3,114.81) -- cycle ;
\draw    (527.56,113.15) .. controls (523.63,119.69) and (529.11,125.36) .. (537.06,121.71) ;
\draw    (526.51,116.62) .. controls (530.58,116.36) and (533.04,118.82) .. (534.05,122.69) ;
\draw   (453.16,113.03) .. controls (451.87,110.51) and (465.46,100.96) .. (483.52,91.69) .. controls (502.57,82.43) and (518.25,76.95) .. (519.55,79.47) .. controls (520.84,81.99) and (507.25,91.54) .. (488.19,101.81) .. controls (470.14,110.07) and (454.46,115.55) .. (453.16,113.03) -- cycle ;
\draw   (461.45,71.76) .. controls (461.45,68.74) and (472.06,66.29) .. (486.37,66.29) .. controls (500.68,66.29) and (512.29,68.74) .. (512.29,71.76) .. controls (512.29,74.78) and (500.68,77.23) .. (486.37,77.23) .. controls (472.06,77.23) and (461.45,74.78) .. (461.45,71.76) -- cycle ;
\draw   (455.43,55.45) .. controls (455.43,52.78) and (468.64,50.62) .. (484.93,50.62) .. controls (501.22,50.62) and (514.43,52.78) .. (514.43,55.45) .. controls (514.43,58.12) and (501.22,60.29) .. (484.93,60.29) .. controls (468.64,60.29) and (455.43,58.12) .. (455.43,55.45) -- cycle ;
\draw   (464.56,126.48) .. controls (464.35,123.2) and (476.01,119.8) .. (490.61,118.86) .. controls (505.21,117.93) and (517.21,119.82) .. (517.42,123.09) .. controls (517.63,126.36) and (505.97,129.77) .. (491.37,130.71) .. controls (476.77,131.64) and (464.76,129.75) .. (464.56,126.48) -- cycle ;
\draw    (452.22,76.49) .. controls (458.4,76.13) and (466.75,76.25) .. (470.95,83.34) ;
\draw    (452.22,76.09) .. controls (452.09,82.73) and (461.11,91.9) .. (464.04,91.16) ;
\draw   (470.84,83.21) .. controls (471.32,83.69) and (470.18,85.86) .. (468.3,88.06) .. controls (466.42,90.25) and (464.51,91.64) .. (464.03,91.16) .. controls (463.56,90.68) and (464.7,88.5) .. (466.58,86.31) .. controls (468.46,84.11) and (470.37,82.72) .. (470.84,83.21) -- cycle ;
\draw    (443.03,53.16) .. controls (444.02,47.06) and (445.43,38.83) .. (453.34,35.76) ;
\draw    (443.03,53.16) .. controls (449.57,54.31) and (460.02,46.81) .. (459.74,43.81) ;
\draw   (452.93,35.86) .. controls (453.48,35.46) and (455.45,36.92) .. (457.33,39.11) .. controls (459.21,41.31) and (460.29,43.41) .. (459.74,43.81) .. controls (459.19,44.21) and (457.22,42.75) .. (455.34,40.55) .. controls (453.46,38.36) and (452.38,36.25) .. (452.93,35.86) -- cycle ;
\draw    (455.82,139.69) .. controls (462,139.73) and (470.35,139.85) .. (474.37,146.64) ;
\draw    (455.82,139.69) .. controls (455.69,146.33) and (464.71,155.5) .. (467.64,154.76) ;
\draw   (474.44,146.81) .. controls (474.92,147.29) and (473.78,149.46) .. (471.9,151.66) .. controls (470.02,153.85) and (468.11,155.24) .. (467.63,154.76) .. controls (467.16,154.28) and (468.3,152.1) .. (470.18,149.91) .. controls (472.06,147.71) and (473.97,146.32) .. (474.44,146.81) -- cycle ;
\draw    (523.17,131.21) .. controls (521.19,137.07) and (518.47,144.96) .. (510.61,146.71) ;
\draw    (523.17,131.21) .. controls (516.9,129.01) and (505.37,134.72) .. (505.16,137.72) ;
\draw   (510.59,146.68) .. controls (509.98,146.98) and (508.27,145.22) .. (506.77,142.75) .. controls (505.27,140.28) and (504.55,138.03) .. (505.16,137.72) .. controls (505.77,137.42) and (507.47,139.18) .. (508.97,141.65) .. controls (510.47,144.12) and (511.19,146.37) .. (510.59,146.68) -- cycle ;
\draw    (485.8,60.29) .. controls (482.19,59.12) and (481.94,52.37) .. (485.8,50.62) ;
\draw    (486.37,77.23) .. controls (483.09,76.03) and (482.16,67.68) .. (486.37,66.29) ;
\draw    (487.09,102.03) .. controls (483.16,101.28) and (481.49,92.43) .. (486.93,90.62) ;
\draw    (489.08,130.33) .. controls (484.96,130.48) and (483.93,121.17) .. (488.2,118.95) ;
\draw    (485.8,50.62) .. controls (490.94,51.22) and (489.94,60.82) .. (485.8,60.29) ;
\draw    (486.37,66.29) .. controls (491.01,68.29) and (490.01,76.87) .. (486.37,77.23) ;
\draw    (486.93,90.62) .. controls (490.11,90.8) and (491.71,100) .. (486.93,102.03) ;
\draw    (488.2,118.95) .. controls (492.11,120.37) and (492.15,129.12) .. (489.08,130.33) ;
\draw   (444.89,196.15) .. controls (444.89,192.87) and (464.74,190.22) .. (489.23,190.22) .. controls (513.71,190.22) and (533.56,192.87) .. (533.56,196.15) .. controls (533.56,199.43) and (513.71,202.08) .. (489.23,202.08) .. controls (464.74,202.08) and (444.89,199.43) .. (444.89,196.15) -- cycle ;
\draw    (489.49,201.63) .. controls (484.89,200.13) and (484.69,191.83) .. (489.23,190.22) ;
\draw    (489.23,190.22) .. controls (493,191.03) and (493.8,199.93) .. (489.49,201.63) ;
\draw   (443.89,196.15) .. controls (443.89,195.87) and (444.12,195.65) .. (444.39,195.65) .. controls (444.67,195.65) and (444.89,195.87) .. (444.89,196.15) .. controls (444.89,196.43) and (444.67,196.65) .. (444.39,196.65) .. controls (444.12,196.65) and (443.89,196.43) .. (443.89,196.15) -- cycle ;
\draw   (533.56,196.15) .. controls (533.56,195.87) and (533.79,195.65) .. (534.06,195.65) .. controls (534.34,195.65) and (534.56,195.87) .. (534.56,196.15) .. controls (534.56,196.43) and (534.34,196.65) .. (534.06,196.65) .. controls (533.79,196.65) and (533.56,196.43) .. (533.56,196.15) -- cycle ;

\draw (146.33,55.4) node [anchor=north west][inner sep=0.75pt]  {$\circ $};
\draw (146.33,95.4) node [anchor=north west][inner sep=0.75pt]  {$\circ $};
\draw (146.33,155.4) node [anchor=north west][inner sep=0.75pt]  {$\circ $};
\draw (196.33,72.9) node [anchor=north west][inner sep=0.75pt]    {$\bullet $};
\draw (196.33,132.9) node [anchor=north west][inner sep=0.75pt]    {$\bullet $};
\draw (296.33,73.57) node [anchor=north west][inner sep=0.75pt]    {$\bullet $};
\draw (296.33,133.9) node [anchor=north west][inner sep=0.75pt]    {$\bullet $};
\draw (336.33,155.4) node [anchor=north west][inner sep=0.75pt]  {$\circ $};
\draw (176.33,216.06) node [anchor=north west][inner sep=0.75pt]    {$\vec{\Gamma} \in G_{g,n}(\CP,\RP|\beta',\vec{\mu})$};
\draw (483.67,162.07) node [anchor=north west][inner sep=0.75pt]    {$\downarrow $};
\draw (426.34,190.06) node [anchor=north west][inner sep=0.75pt]    {$p_{1}$};
\draw (538.74,190.06) node [anchor=north west][inner sep=0.75pt]    {$p_{2}$};
\draw (458.94,216.06) node [anchor=north west][inner sep=0.75pt]    {$\left(\mathbb{P}^{1} ,\mathbb{RP}^{1}\right)$};
\draw (170.74,150.06) node [anchor=north west][inner sep=0.75pt]    {$h_j$};
\end{tikzpicture}
\caption{The left figure is an example of the graph $\vec{\Gamma}$ $\in$ $G_{g,n}(\bP^1,\bR\bP^1|\beta',\vec{\mu})$. 
The right figure is an example of stable map corresponding to $\vec{\Gamma}$.}
\end{figure}

Given $\vec{\Ga}\in G_{g,n}(\CP,\RP|\beta',\vec{\mu})$, we introduce the following notations:
\begin{itemize}
    \item Let $J_\pm = \{j\in \{1,\dots,h\}:\pm \mu_j > 0\}$. 
    \item (weight) We define
    \[
        {\bf w}(p_1) = -\sv, \quad {\bf w}(p_2) = \sv,
    \]
    For a flag $f\in F_v$, we define
    \begin{equation*}
        {\bf w}_f := \left\{
        \begin{aligned}
            &\frac{{\bf w}(p_{\vec{f}(v)})}{d_e}, \quad f=(e,v),
            \\
            &\frac{{\bf w}(p_{\vec{f}(v)})}{d_{h_j}}, \quad f=(h_j,v).    
        \end{aligned}
        \right.
    \end{equation*}
    \item (vertex contribution) Let $g\in \bZ_{\geq 0}$,
        \[
            {\bf h}(p_i,g) = \frac{\Lambda^\vee_g({\bf w}(p_i))}{{\bf w}(p_i)},
        \] 
        where $\Lambda^\vee_g(\su) := \sum_{i=0}^g (-1)^i\lambda_i\su^{g-i}$ and $\lambda_i$ is the $i$-th Hodge class on $\overline{\cM}_{g,n}$.
    \item (edge contribution) For $d\in \bZ_{>0}$, we define
        \[
            {\bf h}(e,d) = \frac{(-1)^dd^{2d}}{(d!)^2\sv^{2d}}.
        \]
    \item (disk factor) For $\mu\in\bZ_{>0}$, we define
    \begin{equation*}\label{eqn:disk-factor}
        D^1(\mu) = (-1)^{\mu + 1}\frac{\mu^{\mu-2}}{\mu!\sv^{\mu-2}},\quad
        D^2(\mu) = \frac{\mu^{\mu-2}}{\mu!\sv^{\mu-2}}.
    \end{equation*}
\end{itemize}

By the virtual localization formula in \cite{BNPT22}, we get the following proposition.
\begin{prop}\label{prop:localization} Let $\beta'=d[\CP]+\sum_{i=1}^{h}\mu_i[\RP]\in H_2(\CP,\RP)$. Then for $\gamma_1,\dots,\gamma_n\in H^*_T(\bP^1)$ and $g, a_1,\dots,a_n\in \bZ_{\geq 0}$, we have
    \begin{equation*}
        \begin{aligned}
            &\langle \tau_{a_1}(\gamma_1),\dots,\tau_{a_n}(\gamma_n)\rangle^{\OGW}_{g,\beta',\vec{\mu}} 
            \\
            = &\sum_{\vec{\Ga}\in G_{g,n}(\CP,\RP|\beta',\vec{\mu})}\frac{1}{|\Aut(\vec{\Ga})|}\prod_{e\in E(\Ga)}\frac{{\bf h}(e,d_e)}{d_e}
            \prod_{v\in V(\Ga)}\Big({\bf w}(p_{\vec{f}(v)})^{|E_v|}\prod_{i\in S_v}i^*_{p_{\vec{f}(v)}}\gamma_{i}\Big)
            \\
            &\cdot\prod_{j\in J_-}D^1(-\mu_j)\prod_{j\in J_+}D^2(\mu_j)
            \prod_{v\in V(\Ga)}\int_{\overline{\cM}_{g_v,F_v\cup S_v}}\frac{{\bf h}(p_{\vec{f}(v)},g_v)\prod_{i\in S_v}\psi_i^{a_i}}{\prod_{h_j\in H_v}{\bf w}_{(h_j,v)}\prod_{f\in F_v}({\bf w}_{f}-\psi_{f})}.
        \end{aligned}
    \end{equation*}
    We use the following convention for the unstable integrals:
\begin{equation*}
    \begin{aligned}
        \int_{\overline{\cM}_{0,1}}\frac{1}{{\bf w}-\psi} = {\bf w},
        \quad
        \int_{\overline{\cM}_{0,2}}\frac{\psi_2^a}{{\bf w}-\psi_1} = (-{\bf w})^a&, \quad a\in \bZ_{\geq 0},
        \\
        \int_{\overline{\cM}_{0,2}}\frac{1}{({\bf w}_1-\psi_1)({\bf w}_2-\psi_2)}= \frac{1}{{\bf w}_1+{\bf w}_2}&.
    \end{aligned}
\end{equation*}
\end{prop}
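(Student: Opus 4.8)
The plan is to invoke the $T$-equivariant virtual localization theorem for open maps established in \cite{BNPT22} and then evaluate each fixed-point contribution explicitly. The starting point is the decomposition \eqref{eq:fixed-locus} of the $T$-fixed locus into connected components $F_{\vec{\Ga}}=[\overline{\cM}_{\vec{\Ga}}/A_{\vec{\Ga}}]$ indexed by decorated graphs $\vec{\Ga}\in G_{g,n}(\CP,\RP|\beta',\vec{\mu})$. Localization then rewrites the defining integral as a graph sum
\[
\langle\tau_{a_1}(\gamma_1),\dots,\tau_{a_n}(\gamma_n)\rangle^{\OGW}_{g,\beta',\vec{\mu}} = \sum_{\vec{\Ga}\in G_{g,n}(\CP,\RP|\beta',\vec{\mu})} \int_{[F_{\vec{\Ga}}]^\vir} \frac{\prod_{i=1}^n (\psi_i^T)^{a_i}\,\ev_i^*(\gamma_i)|_{F_{\vec{\Ga}}}}{e_T(N^\vir_{\vec{\Ga}})}.
\]
Passing from the quotient $F_{\vec{\Ga}}$ to the cover $\overline{\cM}_{\vec{\Ga}}=\prod_{v\in V^S(\Ga)}\overline{\cM}_{g_v,F_v\cup S_v}$ introduces the factor $1/|A_{\vec{\Ga}}|$, and the short exact sequence $1\to A^0_{\vec{\Ga}}\to A_{\vec{\Ga}}\to\Aut(\vec{\Ga})\to 1$ gives $|A_{\vec{\Ga}}|=|A^0_{\vec{\Ga}}|\cdot|\Aut(\vec{\Ga})|$ with $|A^0_{\vec{\Ga}}|=\prod_e d_e\prod_j d_{h_j}$. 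The covering-automorphism orders $d_e$ will reappear as the explicit $1/d_e$ edge prefactors, while the $d_{h_j}=|\mu_j|$ will be incorporated into the disk factors $D^1,D^2$, leaving the overall weight $1/|\Aut(\vec{\Ga})|$.

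The second step is to factor both $e_T(N^\vir_{\vec{\Ga}})$ and the descendant integrand into local pieces attached to the edges, vertices, and half-edges of $\vec{\Ga}$, exploiting the product structure of the normalization of the domain. Each edge $e$ is a degree-$d_e$ cover $C\cong\CP\to\CP$ branched over $p_1,p_2$; computing the equivariant Euler class of the space of sections of the pulled-back tangent bundle reproduces the standard closed $\bP^1$ localization factor, and after dividing by the $\bZ_{d_e}$ covering automorphism one obtains $\mathbf{h}(e,d_e)/d_e$ with $\mathbf{h}(e,d_e)=\frac{(-1)^{d_e}d_e^{2d_e}}{(d_e!)^2\sv^{2d_e}}$. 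Each vertex $v$ contributes a contracted genus-$g_v$ component mapping to $p_{\vec{f}(v)}$; the smoothing of the incident nodes together with the $H^1$-obstruction produce the weights $\mathbf{w}(p_{\vec{f}(v)})^{|E_v|}$, the evaluation and $\psi$-class insertions, and the Hodge factor $\mathbf{h}(p_{\vec{f}(v)},g_v)=\Lambda^\vee_{g_v}(\mathbf{w}(p_{\vec{f}(v)}))/\mathbf{w}(p_{\vec{f}(v)})$, giving exactly the integral over $\overline{\cM}_{g_v,F_v\cup S_v}$ in the statement, with the unstable cases governed by the stated $\overline{\cM}_{0,1}$ and $\overline{\cM}_{0,2}$ conventions.

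The distinctive open ingredient is the half-edge contribution, and this is where the main work lies. Each half-edge $h_j$ represents a disk $D_j$ covering the hemisphere $D_{\vec{f}(v_j)}$ as a degree-$|\mu_j|$ branched cover with winding $\mu_j=(-1)^{\vec{f}(v_j)}\vec{d}(h_j)$. The disk factors $D^1(\mu)=(-1)^{\mu+1}\frac{\mu^{\mu-2}}{\mu!\,\sv^{\mu-2}}$ and $D^2(\mu)=\frac{\mu^{\mu-2}}{\mu!\,\sv^{\mu-2}}$ are precisely the local contributions of these bordered components to the open localization formula of \cite{BNPT22}; extracting them requires analyzing the equivariant deformation-obstruction theory of a map from a disk with Lagrangian boundary on $\RP$, including the reflection involution $b$ on the double $\Si_\bC$ and the induced reality constraint. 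I expect the principal obstacle to be the bookkeeping of these disk factors: matching the normalization of the Lagrangian boundary condition, tracking the sign $(-1)^{\mu+1}$ and the power of $\sv$ between $p_1$ and $p_2$, and verifying that dividing by the disk covering automorphism $\bZ_{|\mu_j|}$ converts $(\mu-1)!$ into the $\mu!$ appearing in $D^{1},D^{2}$. Once the three local contributions are identified and the automorphism factors reconciled, the product structure of the localization formula assembles them into the claimed graph sum over $G_{g,n}(\CP,\RP|\beta',\vec{\mu})$.
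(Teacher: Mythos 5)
Your proposal follows essentially the same route as the paper, which itself gives no argument beyond citing the virtual localization formula of \cite{BNPT22} applied to the fixed-locus decomposition \eqref{eq:fixed-locus}; your breakdown into edge, vertex, and disk contributions together with the automorphism bookkeeping $|A_{\vec{\Ga}}|=|A^0_{\vec{\Ga}}|\cdot|\Aut(\vec{\Ga})|$ is precisely what that citation encapsulates. One framing correction: since the paper \emph{defines} $\langle \tau_{a_1}(\gamma_1)\dots\tau_{a_n}(\gamma_n)\rangle^{\OGW}_{g,\beta',\vec{\mu}}$ as the integral over $[F]^\vir$ (the higher-genus localization statement of \cite{BNPT22} being conjectural, as the introduction notes), no localization theorem is actually invoked --- your first displayed equation is just the decomposition of $F$ into connected components, and all that is needed from \cite{BNPT22} is the unconditional computation of $e_T(N^\vir)$ restricted to each component.
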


\subsection{Open/descendant correspondence}
Given $u:(\Si,\partial\Si, {\bf x})\rightarrow (\CP,\RP)$ that represents a point $\xi\in \overline{\cM}_{(g,h),n}(\CP,\RP|\beta',\vec{\mu})^T$,
we have 
\[
    \Si = C \cup \bigcup_{j=1}^h D_j,
\]
where $C$ is a closed nodal curve of genus $g$ with marked points $x_1,\dots,x_n$, and $D_j$'s are holomorphic disks.
$C$ and $D_j$ intersect at $y_j$. Let $\hat{u}= u|_C$ and $u_j=u|_{D_j}$.
Then $\hat{u}:(C,{\bf x}, {\bf y})\rightarrow \CP$ represents a point $\hat{\xi}\in\overline{\cM}_{g,n+h}(\bP^1,\beta)^{T}$, 
where the $(n+j)$-th marked point $x_{n+j}$ is $y_j$.

The following definition is introduced in \cite[Definition 52]{Liu13}.

\begin{defn}[Decorated graphs (b)]
    Define $G_{g,n+h}(\CP,\beta)$ to be the set of all decorated graphs $\hat{\Ga}=(\Ga,\vec{f},\vec{d},\vec{g},\vec{s})$ defined as follows. 
    Let $n\in\bZ_{\geq 0}$, $h\in \bZ_{>0}$ and $\beta=d[\bP^1]\in E(\CP)$. A genus $g$, $(n+h)$-pointed,
    degree $\beta$ decorated graph for $\CP$ is a tuple $\hat{\Ga}=(\Ga,\vec{f},\vec{d}, \vec{g},\vec{s})$ consisting of 
    the following data.
    \begin{enumerate}
        \item $\Gamma$ is a compact, connected 1-dimensional CW complex. Let $V(\Ga)$ denote
        the set of vertices in $\Ga$, denoted by $\bullet$. 
        Let $E(\Ga)$ denote the set of edges, where an edge $e$ is a line connecting two $\bullet$ vertices.
        Let $F(\Ga)$ be the set of flags:
        \[
            \{(e,v)\in E(\Ga)\times V(\Ga):v\in e\}.
        \]
        For each $v\in V(\Ga)$, let $F_v$ (\emph{resp.} $E_v$) denote the flags (\emph{resp.} edges) attached to $v$, and let $\val(v)=|F_v|=|E_v|$ denote the number of flags (\emph{resp.} edges) incident to $v$.
        \item The \emph{label map} $\vec{f}: V(\Ga)\rightarrow \{1,2\}$ labels each $\bullet$ with a number.
            If $v_1,v_2\in V(\Ga)$ are connected by an edge, we require
            $\vec{f}(v_1)\neq \vec{f}(v_2)$.
        \item The \emph{degree map} $\vec{d}:E(\Ga)\rightarrow \bZ_{>0}$ sends an edge $e$ to a positive integer $\vec{d}(e)=d_e$.
        \item The \emph{genus map} $\vec{g}:V(\Ga)\rightarrow\bZ_{\geq 0}$ sends a vertex $v\in V(\Ga)$ to a nonnegative integer $g_v$.
        \item The \emph{marking map} $\vec{s}:\{1,2,\dots,n+h\}\rightarrow V(\Ga)$. For each $v\in V(\Ga)$, define $S_v := \vec{s}^{-1}(v)$, and $n_v=|S_v|$.
    \end{enumerate}
    The data is required to satisfy the following conditions:
    \begin{itemize}
        \item [(i)] (genus) $g = \sum_{v\in V(\Ga)}g_v + |E(\Ga)| - |V(\Ga)| + 1$.
        \item [(ii)] (degree) $d = \sum_{e\in E(\Ga)}d_e$.
    \end{itemize}
\end{defn}

Let $F_{\hat{\Ga}}$ be the connected component corresponding to $\hat{\Ga}$. We have
\[
    \overline{\cM}_{g,n+h}(\bP^1,\beta)^{T} = \bigcup_{\hat{\Ga}\in G_{g,n+h}(\bP^1,\beta)}F_{\hat{\Ga}}.
\]
By \cite[Theorem 73]{Liu13}, we get
\begin{prop}\label{prop:localization-liu} Let $\beta=d[\CP]\in E(\CP)$. Then for $\gamma_1,\dots,\gamma_{n+h}\in H^*_T(\CP)$ and $g, a_1,\dots, a_{n+h}\in\bZ_{\geq 0}$, we have
    \begin{equation*}
    \begin{aligned}
        &\langle \tau_{a_1}(\gamma_1),\dots,\tau_{a_{n+h}}(\gamma_{n+h})\rangle^{\CP,T}_{g,n+h,\beta} 
            \\
            = &\sum_{\hat{\Ga}\in G_{g,n+h}(\CP,\beta)}\frac{1}{|\Aut(\hat{\Ga})|}\prod_{e\in E(\Ga)}\frac{{\bf h}(e,d_e)}{d_e}
            \prod_{v\in V(\Ga)}\Big({\bf w}(p_{\vec{f}(v)})^{|E_v|}\prod_{i\in S_v}i^*_{p_{\vec{f}(v)}}\gamma_{i}\Big)
            \\
            &\cdot\prod_{v\in V(\Ga)}\int_{\overline{\cM}_{g_v,E_v\cup S_v}}\frac{{\bf h}(p_{\vec{f}(v)},g_v)\prod_{i\in S_v}\psi_i^{a_i}}{\prod_{e\in E_v}({\bf w}_{(e,v)}-\psi_{(e,v)})}.
    \end{aligned}
\end{equation*}
\end{prop}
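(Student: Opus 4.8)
The plan is to obtain the identity as a direct application of the Graber--Pandharipande virtual localization theorem to $\overline{\cM}_{g,n+h}(\bP^1,\beta)$ with its induced $T$-action. Since the decomposition of the $T$-fixed locus into the components $F_{\hat\Ga}$ indexed by the decorated graphs $\hat\Ga\in G_{g,n+h}(\bP^1,\beta)$ is already recorded above, the starting point is
\[
    \langle \tau_{a_1}(\gamma_1),\dots,\tau_{a_{n+h}}(\gamma_{n+h})\rangle^{\CP,T}_{g,n+h,\beta}
    = \sum_{\hat\Ga}\int_{[F_{\hat\Ga}]^\vir}\frac{\big(\prod_{i}\psi_i^{a_i}\ev_i^*\gamma_i\big)\big|_{F_{\hat\Ga}}}{e_T(N^\vir_{\hat\Ga})}.
\]
Here $F_{\hat\Ga}$ is a finite quotient of $\prod_v\overline{\cM}_{g_v,E_v\cup S_v}$ by a group that fits into an extension of $\Aut(\hat\Ga)$ by the covering automorphisms $\prod_{e}\bZ_{d_e}$. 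Passing to the quotient produces the global factor $1/|\Aut(\hat\Ga)|$, while the covering automorphisms supply the factor $\prod_{e}1/d_e$ that I will pair with the edge contributions.

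First I would restrict the integrand. Because a $T$-fixed map contracts each vertex component to the fixed point $p_{\vec{f}(v)}$, the descendant classes restrict to the $\psi$-classes on the factors $\overline{\cM}_{g_v,E_v\cup S_v}$ and each $\ev_i^*\gamma_i$ restricts to $i^*_{p_{\vec{f}(v)}}\gamma_i$; this yields the factors $\prod_{i\in S_v}i^*_{p_{\vec{f}(v)}}\gamma_i$ and $\prod_{i\in S_v}\psi_i^{a_i}$. Next I would compute $1/e_T(N^\vir_{\hat\Ga})$ by splitting the deformation--obstruction complex along the normalization exact sequence of the domain into edge, vertex, and node pieces. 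Using the tangent weights ${\bf w}(p_1)=-\sv$ and ${\bf w}(p_2)=\sv$: the degree-$d_e$ cover attached to an edge $e$ contributes the factor ${\bf h}(e,d_e)$; the obstruction bundle $H^1(C,\cO_C)\otimes T_{p_{\vec{f}(v)}}\bP^1$ on a contracted vertex gives the Hodge factor ${\bf h}(p_{\vec{f}(v)},g_v)=\Lambda^\vee_{g_v}({\bf w}(p_{\vec{f}(v)}))/{\bf w}(p_{\vec{f}(v)})$; smoothing the node along the flag $(e,v)$ contributes $1/({\bf w}_{(e,v)}-\psi_{(e,v)})$; and the node terms $\bigoplus T_{p_{\vec{f}(v)}}\bP^1$ appearing in the normalization sequence produce the prefactor ${\bf w}(p_{\vec{f}(v)})^{|E_v|}$.

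Collecting these contributions over all edges, vertices, and flags, and combining the $\prod_e 1/d_e$ coming from the covering automorphisms with ${\bf h}(e,d_e)$ to form $\frac{{\bf h}(e,d_e)}{d_e}$, reproduces the stated formula. I expect the main obstacle to be the edge factor: I would determine the $T$-representations $H^0(f^*T\bP^1)^{\mathrm{mov}}$ and $H^1(f^*T\bP^1)^{\mathrm{mov}}$ for a degree-$d_e$ totally ramified cover, whose weights are the nonzero multiples of $\pm\sv/d_e$, and take the ratio of their equivariant Euler classes to arrive at ${\bf h}(e,d_e)=\frac{(-1)^{d_e}d_e^{2d_e}}{(d_e!)^2\sv^{2d_e}}$; this weight bookkeeping, together with the correct treatment of the automorphism factors, is the delicate part. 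The only remaining subtlety is the unstable vertices, for which $\overline{\cM}_{g_v,E_v\cup S_v}$ degenerates; there I would adopt the conventions stated for $\overline{\cM}_{0,1}$ and $\overline{\cM}_{0,2}$ so that the formula stays uniformly valid. Since the resulting identity is exactly \cite[Theorem 73]{Liu13}, this completes the proof.
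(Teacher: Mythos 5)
Your proposal is correct and coincides with the paper's argument: the paper proves this proposition simply by invoking \cite[Theorem 73]{Liu13}, which is exactly the Graber--Pandharipande virtual localization computation you carry out (fixed loci $F_{\hat\Ga}=[\overline{\cM}_{\hat\Ga}/A_{\hat\Ga}]$ giving the factor $\tfrac{1}{|\Aut(\hat\Ga)|}\prod_e\tfrac{1}{d_e}$, the normalization-sequence splitting of $N^\vir$ into the edge factor ${\bf h}(e,d_e)$, the Hodge factor ${\bf h}(p_{\vec f(v)},g_v)$, the node-smoothing terms $\tfrac{1}{{\bf w}_{(e,v)}-\psi_{(e,v)}}$, and the node terms ${\bf w}(p_{\vec f(v)})^{|E_v|}$). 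Your weight bookkeeping for the edge, with $H^1(f^*T\bP^1)=0$ and $e\bigl(H^0(f^*T\bP^1)^{\mathrm{mov}}\bigr)=\prod_{m=1}^{d_e}\bigl(-m^2\sv^2/d_e^2\bigr)$, indeed yields ${\bf h}(e,d_e)=\tfrac{(-1)^{d_e}d_e^{2d_e}}{(d_e!)^2\sv^{2d_e}}$, so the derivation is sound.
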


Let $\vec{\Ga}$ and $\hat{\Ga}$ be the decorated graphs corresponding to the map $u$ and $\hat{u}$ respectively.
There is a map 
\begin{equation}\label{eq:graph-map}
    G_{g,n}(\CP,\RP|\beta',\vec{\mu})\rightarrow G_{g,n+h}(\CP,\beta): \vec{\Ga}\mapsto\hat{\Ga}.
\end{equation}    
by cutting all half-edges $h_j$ of $\vec{\Ga}$, and then labeling the attached $\bullet$ vertex of $h_j$ with $n+j$. 
Because the half-edges $h_j$ are labeled, we have $\Aut(\vec{\Ga})=\Aut(\hat{\Ga})$.

By Proposition \ref{prop:localization}, Equation \eqref{eq:graph-map} and Proposition \ref{prop:localization-liu}, we give a open/descendant correspondence theorem:

\begin{theorem}\label{thm:open-descendant} Let $\beta'=d[\CP]+\sum_{i=1}^{h}\mu_i[\RP]\in H_2(\CP,\RP)$. Then for $\gamma_1,\dots,\gamma_n\in H^*_T(\CP)$ and $g, a_1,\dots, a_n\in\bZ_{\geq 0}$, we have
    \begin{equation*}
    \begin{aligned}
   & \langle \tau_{a_1}(\gamma_1)\dots\tau_{a_n}(\gamma_n)\rangle^{\OGW}_{g,\beta',\vec{\mu}}
    = \prod_{j\in J_-}D^1(-\mu_j)\prod_{j\in J_+}D^2(\mu_j)\ \ \ \ \ \ \ \ \ \ \ \ \
    \\
    & \cdot \int_{[\overline{\cM}_{g,n+h}(\bP^1,\beta)]^{\vir,T}}\frac{\prod_{i=1}^{n}\psi_i^{a_i}\ev_i^*(\gamma_i)\prod_{j\in J_-}\ev_{n+j}^* \phi_1\prod_{j\in J_+}\ev_{n+j}^* \phi_2}
        {\prod_{j=1}^h\frac{\sv}{\mu_j}(\frac{\sv}{\mu_j}-\psi_{n+j})}.
    \end{aligned}
\end{equation*}
\end{theorem}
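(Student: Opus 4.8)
The plan is to prove the identity by expanding both sides as sums over decorated graphs and matching them term by term. First I would apply the virtual localization formula of Proposition \ref{prop:localization} to the left-hand side. Since the disk factors $\prod_{j\in J_-}D^1(-\mu_j)\prod_{j\in J_+}D^2(\mu_j)$ depend only on $\vec{\mu}$ and not on the summation graph $\vec{\Ga}$, they can be pulled outside the sum over $G_{g,n}(\CP,\RP|\beta',\vec{\mu})$. This is precisely the prefactor appearing on the right-hand side, so it remains to show that the residual graph sum equals the closed descendant integral $\int_{[\overline{\cM}_{g,n+h}(\bP^1,\beta)]^{\vir,T}}(\cdots)$.

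Next I would compute that integral by the same virtual localization, in the form of Proposition \ref{prop:localization-liu}, treating the insertion $\frac{\ev_{n+j}^*\phi}{\frac{\sv}{\mu_j}(\frac{\sv}{\mu_j}-\psi_{n+j})}$ at the $(n+j)$-th marked point as a localized equivariant class, e.g.\ by expanding $\frac{1}{w(w-\psi_{n+j})}=\sum_{a\geq 0}w^{-a-2}\psi_{n+j}^a$ and applying the formula termwise. The crucial observation is that $\phi_1|_{p_1}=1$, $\phi_1|_{p_2}=0$, $\phi_2|_{p_1}=0$, $\phi_2|_{p_2}=1$. Hence the insertion $\ev_{n+j}^*\phi_1$ (for $j\in J_-$) forces the vertex carrying marked point $n+j$ to have label $\vec{f}=1$, and $\ev_{n+j}^*\phi_2$ (for $j\in J_+$) forces label $\vec{f}=2$; contributions with the wrong label vanish. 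Comparing with the winding-number condition $\mu_j=(-1)^{\vec{f}(v_j)}\vec{d}(h_j)$, this is exactly the image of the graph map \eqref{eq:graph-map}: the contributing closed graphs $\hat{\Ga}\in G_{g,n+h}(\CP,\beta)$ are precisely those obtained from some $\vec{\Ga}\in G_{g,n}(\CP,\RP|\beta',\vec{\mu})$ by cutting the half-edges, and this correspondence is a bijection onto the contributing set, preserving $d=\sum_{e}d_e$ and satisfying $\Aut(\vec{\Ga})=\Aut(\hat{\Ga})$.

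It then remains to identify the per-graph weights. The edge factors $\frac{{\bf h}(e,d_e)}{d_e}$ and the vertex prefactors ${\bf w}(p_{\vec{f}(v)})^{|E_v|}\prod_{i\in S_v}i^*_{p_{\vec{f}(v)}}\gamma_i$ are literally the same on both sides, the extra factors $\phi|_{p_{\vec{f}(v)}}=1$ carried by the marked points $n+j$ being trivial once the label is correct. The key computation is for the vertex integrals. Writing $w_j=\frac{\sv}{\mu_j}$, a direct check using ${\bf w}(p_1)=-\sv$, ${\bf w}(p_2)=\sv$, and $\mu_j=(-1)^{\vec{f}(v_j)}\vec{d}(h_j)$ gives $w_j={\bf w}_{(h_j,v_j)}$; moreover, under the identification $H_v\leftrightarrow\{n+j:h_j\in H_v\}$ the psi class $\psi_{(h_j,v)}$ equals $\psi_{n+j}$ and the moduli spaces $\overline{\cM}_{g_v,F_v\cup S_v}$ and $\overline{\cM}_{g_v,E_v\cup(S_v\cup\{n+j:h_j\in H_v\})}$ coincide (so the stability conditions agree, and the unstable conventions stated after Proposition \ref{prop:localization} match those used for the closed integrals). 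Therefore the descendant insertion $\frac{1}{w_j(w_j-\psi_{n+j})}$ at the vertex reproduces exactly the half-edge factor $\frac{1}{{\bf w}_{(h_j,v)}({\bf w}_{(h_j,v)}-\psi_{(h_j,v)})}$ occurring in $\frac{1}{\prod_{h_j\in H_v}{\bf w}_{(h_j,v)}\prod_{f\in F_v}({\bf w}_f-\psi_f)}$ of Proposition \ref{prop:localization}. Matching these makes the two vertex integrands equal, completing the term-by-term comparison.

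The main obstacle is this last bookkeeping step: one must check that the single rational factor $\frac{1}{\frac{\sv}{\mu_j}(\frac{\sv}{\mu_j}-\psi_{n+j})}$ simultaneously reproduces both the normalization $\frac{1}{{\bf w}_{(h_j,v)}}$ and the half-edge smoothing factor $\frac{1}{{\bf w}_{(h_j,v)}-\psi_{(h_j,v)}}$ of the open localization, with every sign dictated correctly through $J_\pm$ by the winding-number convention. Once the identity $w_j={\bf w}_{(h_j,v_j)}$ and the $\psi$-class identification are in place, the edge and vertex contributions agree tautologically, so no further genus- or degree-dependent estimates are required.
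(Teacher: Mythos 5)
Your proposal is correct and follows exactly the paper's own route: the paper proves Theorem \ref{thm:open-descendant} by combining Proposition \ref{prop:localization}, the graph correspondence \eqref{eq:graph-map} (with $\Aut(\vec{\Ga})=\Aut(\hat{\Ga})$), and Proposition \ref{prop:localization-liu}, which is precisely your term-by-term matching. Your verification that $\frac{\sv}{\mu_j}={\bf w}_{(h_j,v_j)}$, that the insertions $\ev_{n+j}^*\phi_1,\ev_{n+j}^*\phi_2$ select the correct vertex labels, and that the factor $\frac{1}{\frac{\sv}{\mu_j}(\frac{\sv}{\mu_j}-\psi_{n+j})}$ reproduces both half-edge denominators simply spells out the details the paper leaves implicit.
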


\subsection{Generating function of open Gromov-Witten invariants}\label{sec:gen.fun.}
Consider the generating function of genus $g$, $h$ boundary circles open Gromov-Witten invariants of $(\CP,\RP)$: let $\bt=t^01 + t^1H$,
\[
    F_{g,h}(\bt,Q;X_1,\dots,X_h) = \sum_{\beta\in E(\bP^1)\atop {l\geq 0}}\sum_{\vec{\mu}=(\mu_1,\dots,\mu_h)\atop {\mu_i\in\bZ_{\neq 0}}}
    \frac{Q^\beta}{l!}\langle\bt^l\rangle^{\OGW}_{g,\beta',\vec{\mu}}\prod_{i=1}^{h}X_{i}^{\mu_i}.
\]
Let
\[
    \tilde{\xi}^1_k(X) = \sum_{d<0}(\frac{\sv}{d})^{-(k+2)}D^1(-d)X^d,    
\]
\[
    \tilde{\xi}^2_k(X) = \sum_{d>0}(\frac{\sv}{d})^{-(k+2)}D^2(d)X^d.
\]
For $\alpha \in \{1,2\}$,
\[
    \Big(\frac{1}{\sv}X\frac{d}{dX}\Big)\tilde{\xi}^\alpha_k(X) =
    \tilde{\xi}^\alpha_{k+1}(X),
\]
\[
    \tilde{\xi}^\alpha(z,X):= \sum_{k\in\bZ_{\geq -2}}z^k\tilde{\xi}^\alpha_k(X).
\]

With the above notations and the Theorem \ref{thm:open-descendant}, we have
\begin{prop}\label{prop:open-generating-function}
    \rm
    \begin{itemize}
        \item [(1)](disk invariants)
\begin{equation*}
    \begin{aligned}
                & \ \ \ \ \ F_{0,1}(\bt,Q;X) 
        \\
                &= 
                \sum_{d<0}S_z(1,\phi_1)\Big|_{z=\frac{\sv}{d}} D^1(-d)X^d + \sum_{d>0} S_z(1,\phi_2)\Big|_{z=\frac{\sv}{d}} D^2(d)X^d
        \\
                &= \sum_{\alpha\in\{1,2\}}\Big(\frac{1}{\Delta^\alpha}\tilde{\xi}^\alpha_{-2}(X) + (\frac{t^0}{\Delta^\alpha}+\frac{t^1}{2})\tilde{\xi}^\alpha_{-1}(X)
        +\sum_{k\geq 0} \llangle \phi_\alpha\psi^k\rrangle_{0,1}^{\bP^1,T}\tilde{\xi}^\alpha_k(X)\Big)
        \\
                &= [z^{-2}]\sum_{\alpha \in \{1,2\}} S_z(1,\phi_\alpha)\tilde{\xi}^\alpha(z,X).
    \end{aligned}
\end{equation*}

        \item[(2)] (annulus invariants)
\begin{equation*}
    \begin{aligned}
                & \ \ \ \ \ F_{0,2}(\bt,Q;X_1,X_2)-F_{0,2}(0;X_1,X_2)
        \\
        &= \sum_{k_1,k_2\geq 0}\sum_{\alpha_1,\alpha_2\in\{1,2\}}\llangle \phi_{\alpha_1}\psi^{k_1},\phi_{\alpha_2}\psi^{k_2}\rrangle_{0,2}^{\bP^1,T}
        \tilde{\xi}_{k_1}^{\alpha_1}(X_1)\tilde{\xi}_{k_2}^{\alpha_2}(X_2)
        \\
        &= [z_1^{-1}z_2^{-1}]\sum_{\alpha_1,\alpha_2\in\{1,2\}}\llangle \frac{\phi_{\alpha_1}}{z_1-\psi_1},
                \frac{\phi_{\alpha_2}}{z_2-\psi_2}\rrangle_{0,2}^{\bP^1,T}\tilde{\xi}^{\alpha_1}(z_1,X_1)\tilde{\xi}^{\alpha_2}(z_2,X_2),
    \end{aligned}
\end{equation*}
        where
\begin{equation*}
    \begin{aligned}
                \frac{1}{\sv}\Big(X_1\frac{\partial}{\partial X_1}+X_2\frac{\partial}{\partial X_2}\Big)F_{0,2}(0;X_1,X_2)= \sum_{\alpha\in\{1,2\}}\frac{1}{\Delta^\alpha}\tilde{\xi}^\alpha_{0}(X_1)\tilde{\xi}^\alpha_{0}(X_2).
            \end{aligned}
        \end{equation*}
        So we have
        \begin{equation*}
            \begin{aligned}
                & \ \ \ \ \ \frac{1}{\sv}\Big(X_1\frac{\partial}{\partial X_1}+X_2\frac{\partial}{\partial X_2}\Big)F_{0,2}(\bt,Q;X_1,X_2)
        \\
                & = [z_1^{0}z_2^{0}](z_1+z_2)\sum_{\alpha_1,\alpha_2\in\{1,2\}}V_{z_1,z_2}(\phi_{\alpha_1},\phi_{\alpha_2})
        \tilde{\xi}^{\alpha_1}(z_1,X_1)\tilde{\xi}^{\alpha_2}(z_2,X_2).
    \end{aligned}
\end{equation*}
        \item[(3)] For $2g-2+n>0$,
\begin{equation*}
    \begin{aligned}
                & \ \ \ \ \ F_{g,n}(\bt,Q;X_1,\dots, X_n) 
        \\
                &= \sum_{k_1,\dots,k_n\geq 0}\sum_{\alpha_1,\dots,\alpha_n \in \{1,2\}}
        \llangle\phi_{\alpha_1}\psi^{k_1},\dots,\phi_{\alpha_n}\psi^{k_n}\rrangle_{g,n}^{\bP^1,T}
        \prod_{j=1}^{n}\tilde{\xi}_{k_j}^{\alpha_j}(X_j)
        \\
                &= [z_1^{-1}\dots z_n^{-1}]\sum_{\alpha_1,\dots,\alpha_n\in\{1,2\}}
        \llangle\frac{\phi_{\alpha_1}}{z_1-\psi_1},\dots,\frac{\phi_{\alpha_n}}{z_n-\psi_n}\rrangle_{g,n}^{\bP^1,T}
        \prod_{j=1}^{n}\tilde{\xi}^{\alpha_j}(z_j,X_j).
    \end{aligned}
\end{equation*}
    \end{itemize}
\end{prop}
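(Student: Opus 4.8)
The plan is to substitute the open/descendant correspondence of Theorem \ref{thm:open-descendant} into the definition of the generating function and reorganize the resulting sums into closed double correlators and the series $\tilde{\xi}^\alpha_k$. Throughout, $n$ denotes the number of boundary circles and $l$ the number of interior $\bt$-insertions. I would begin by applying Theorem \ref{thm:open-descendant}, with its interior marked points specialized to the $l$ copies of $\bt$ (so all descendant orders vanish) and its number of boundaries equal to $n$, rewriting
\[
    F_{g,n}(\bt,Q;X_1,\dots,X_n)=\sum_{\beta,\,l\ge 0}\sum_{\vec{\mu}}\frac{Q^\beta}{l!}\,\langle\bt^l\rangle^{\OGW}_{g,\beta',\vec{\mu}}\prod_{i=1}^n X_i^{\mu_i}
\]
as a sum of $T$-equivariant closed descendant integrals over $\overline{\cM}_{g,l+n}(\bP^1,\beta)$. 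In this form the $j$-th boundary contributes an extra marked point $l+j$ carrying the insertion $\phi_1$ if $\mu_j<0$ and $\phi_2$ if $\mu_j>0$, the disk factor $D^1(-\mu_j)$ or $D^2(\mu_j)$, and the denominator $\frac{\sv}{\mu_j}(\frac{\sv}{\mu_j}-\psi_{l+j})$. The structural point, visible from the decorated graphs, is that the closed degree $d=\sum_e d_e$ sits on the edges while the winding numbers $\mu_j=\pm d_{h_j}$ sit on the half-edges, so the $(\beta,l)$-sum and the $\vec{\mu}$-sum are independent.

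Setting $z_j=\sv/\mu_j$ and expanding geometrically, $\frac{1}{z_j(z_j-\psi)}=\sum_{k\ge0}z_j^{-(k+2)}\psi^k$, turns the $(l+j)$-th insertion into $\sum_{k_j\ge0}(\sv/\mu_j)^{-(k_j+2)}\phi_{\alpha_j}\psi_{l+j}^{k_j}$, where $\alpha_j\in\{1,2\}$ records the sign of $\mu_j$. Summing over $\beta$ and $l$ with weight $Q^\beta/l!$ collapses the closed integrals into the double correlators $\llangle\phi_{\alpha_1}\psi^{k_1},\dots,\phi_{\alpha_n}\psi^{k_n}\rrangle_{g,n}^{\bP^1,T}$ straight from the definition of the double bracket. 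The remaining $\vec{\mu}$-sum factorizes over $j$: summing the disk factor, the power $(\sv/\mu_j)^{-(k_j+2)}$ and the monomial $X_j^{\mu_j}$ over $\mu_j<0$ (resp.\ $\mu_j>0$) is, by definition, exactly $\tilde{\xi}^1_{k_j}(X_j)$ (resp.\ $\tilde{\xi}^2_{k_j}(X_j)$). This yields the middle expression in each of (1)--(3); the final expression then follows by repackaging with $\tilde{\xi}^\alpha(z,X)=\sum_k z^k\tilde{\xi}^\alpha_k(X)$ and $\llangle\frac{\phi_\alpha}{z-\psi},\dots\rrangle=\sum_a\llangle\phi_\alpha\psi^a,\dots\rrangle z^{-a-1}$, so that the stated $[z_1^{-1}\cdots z_n^{-1}]$-coefficient reproduces the previous sum under the matching $k_j=a_j$. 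For the stable range $2g-2+n>0$ this already settles part (3).

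The main obstacle is the two unstable cases (1) and (2), where the correlators $\llangle\,\cdot\,\rrangle_{0,1}$ and $\llangle\,\cdot\,\rrangle_{0,2}$ receive contributions from constant and low-degree maps outside the stable range. Here I would extract these boundary terms using the prescribed conventions. In the disk case the summation above yields $\frac{1}{z}\llangle\frac{\phi_\alpha}{z-\psi}\rrangle_{0,1}$; the string equation together with the conventions $\langle\frac{\gamma}{z-\psi}\rangle_{0,1,0}=z\int_{\bP^1}\gamma$ and $\langle\frac{\gamma_1}{z-\psi},\gamma_2\rangle_{0,2,0}=\int_{\bP^1}\gamma_1\cup\gamma_2$ identify this with $S_z(1,\phi_\alpha)$, giving the first line of (1); its $z^{-(k+2)}$-coefficients are, for $k=-2$, $\int_{\bP^1}\phi_\alpha=1/\Delta^\alpha$ and, for $k=-1$, $\int_{\bP^1}\phi_\alpha\cup\bt=t^0/\Delta^\alpha+t^1/2$ (using $\int_{\bP^1}\phi_\alpha\cup H=1/2$), which are precisely the two anomalous summands $\frac{1}{\Delta^\alpha}\tilde{\xi}^\alpha_{-2}$ and $(t^0/\Delta^\alpha+t^1/2)\tilde{\xi}^\alpha_{-1}$, and for $k\ge0$ the stable terms $\llangle\phi_\alpha\psi^k\rrangle_{0,1}$; the third line of (1) is then the $[z^{-2}]$-repackaging. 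For the annulus I would use the convention $\langle\frac{\gamma_1}{z_1-\psi_1},\frac{\gamma_2}{z_2-\psi_2}\rangle_{0,2,0}=\frac{1}{z_1+z_2}\int_{\bP^1}\gamma_1\cup\gamma_2$ to isolate the degree-zero piece $F_{0,2}(0;X_1,X_2)$, and the identity $V_{z_1,z_2}(a,b)=\frac{1}{z_1+z_2}\sum_\alpha S_{z_1}(\hat\phi_\alpha(q),a)S_{z_2}(\hat\phi_\alpha(q),b)$ to recast the $X_i\partial_{X_i}$-derivative in the claimed $V$-form. Carrying the signs, the $\sqrt{\Delta^\alpha}$-normalizations, and the $z$-degree bookkeeping correctly through these two unstable computations is where essentially all the effort lies; everything else is a direct unwinding of the definitions.
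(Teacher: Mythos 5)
Your proposal is correct and follows essentially the same route as the paper, which presents this Proposition as a direct consequence of Theorem \ref{thm:open-descendant} and the definitions of $F_{g,h}$, the double correlators, and $\tilde{\xi}^\alpha_k$: substitute the open/descendant correspondence, expand $\frac{1}{z(z-\psi)}$ geometrically, collapse the $(\beta,l)$-sum into double correlators, and factor the $\vec{\mu}$-sum into the $\tilde{\xi}$-series. Your treatment of the unstable $(0,1)$ and $(0,2)$ cases via the string equation, the stated degree-zero conventions, and the $V$-identity is exactly the bookkeeping the paper leaves implicit, and it checks out (including $\int_{\bP^1}\phi_\alpha\cup H=\tfrac12$ and the isolation of $F_{0,2}(0;X_1,X_2)$).
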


Let $\vec{\Ga}\in{\bf\Ga}_{g,n}(\bP^1)$ be a labeled graph defined in Section \ref{sec:A-model-graph-sum}. To each ordinary leaf $l_j\in L^o(\Ga)$ with
$\beta(l_j)=\beta\in\{1,2\}$ and $k(l_j)=k\in \bZ_{\geq 0}$, we assign the following weight (open leaf)
\begin{equation}\label{eqn:A-model-open-leaf}
    (\tilde{\cL}^O)^\beta_k(l_j) = [z^k](\sum_{\alpha,\gamma\in\{1,2\}}
    (\tilde{\xi}^\alpha(z,X_j)S^{\hat{\underline{\gamma}}}_{\ \alpha})_+
    R(-z)_\gamma^{\ \beta}).
\end{equation}

We define a new weight $\widetilde{\omega}^{O}_A(\vec{\Ga})$ of a labeled graph $\vec{\Ga}\in {\bf\Ga}_{g,n}(\bP^1)$ as 
\begin{equation*}
    \begin{aligned}
        \widetilde{\omega}^{O}_A(\vec{\Ga}) = &\prod_{v\in V(\Ga)}\Big(\sqrt{\Delta^{\beta(v)}(q)}\Big)^{2g(v)-2+\val(v)}\langle\prod_{h\in H(v)}\tau_{k(h)}\rangle_{g(v)}
        \prod_{e\in E(\Ga)}\cE^{\beta(v_1(e)),\beta(v_2(e))}_{k(h_1(e)),k(h_2(e))}
        \\
        & \cdot \prod_{l\in L^1(\Ga)}(\cL^1)^{\beta(l)}_{k(l)}(l)\prod_{j=1}^n(\tilde{\cL}^{O})^{\beta(l_j)}_{k(l_j)}(l_j).
    \end{aligned}
\end{equation*}
Then by Theorem \ref{thm:descendant-graph-sum} and Proposition \ref{prop:open-generating-function}, we have the following graph sum formula for $F_{g,n}$.
\begin{theorem}\label{thm:open-A-graph-sum}  \rm
    For $2g-2+n>0$, we have
    \[
        F_{g,n}(\bt,Q;X_1,\dots,X_n) = \sum_{\vec{\Ga}\in {\bf \Ga}_{g,n}(\bP^1)}\frac{\widetilde{\omega}_A^O(\vec{\Ga})}{|\Aut(\vec{\Ga})|}.
    \]
\end{theorem}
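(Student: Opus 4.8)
The plan is to derive the claim by feeding the Givental graph-sum formula for the descendant potential (Theorem~\ref{thm:descendant-graph-sum}) into the expression of the open potential as a coefficient of descendant double brackets (Proposition~\ref{prop:open-generating-function}(3)), and then to check that the two weights $\widetilde{\omega}^{O}_A$ and $\omega^{\bf u}_A$ differ \emph{only} in the ordinary-leaf factors. The whole theorem then reduces to a single leaf-by-leaf identity.

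First I would start from Proposition~\ref{prop:open-generating-function}(3), which for $2g-2+n>0$ writes
\[
F_{g,n} = [z_1^{-1}\cdots z_n^{-1}]\sum_{\alpha_1,\dots,\alpha_n}\llangle\frac{\phi_{\alpha_1}}{z_1-\psi_1},\dots,\frac{\phi_{\alpha_n}}{z_n-\psi_n}\rrangle_{g,n}^{\bP^1,T}\prod_{j=1}^n\tilde{\xi}^{\alpha_j}(z_j,X_j),
\]
and substitute Theorem~\ref{thm:descendant-graph-sum} for the double bracket, applied with the descendant insertion $\bu_j(\psi)=\phi_{\alpha_j}/(z_j-\psi)$ at the $j$-th ordinary leaf. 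Since the graph sum is a (formal) sum of products, the operations $\sum_{\alpha_j}$, the weighting by $\tilde{\xi}^{\alpha_j}(z_j,X_j)$, and the extraction $[z_j^{-1}]$ are linear and act only through $\bu_j$; hence they commute with the sum over stable labeled graphs and with the edge, vertex, and dilaton-leaf factors, none of which depend on $\alpha_j$, $z_j$ or $X_j$. It therefore suffices to prove that applying these three operations to $(\cL^{\bf u})^\beta_k(l_j)$ yields exactly $(\tilde{\cL}^{O})^\beta_k(l_j)$.

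The heart of the argument is this leaf identity, which I would establish in three moves. (i) By $\bar{S}_T$-linearity of $\cS$ together with $\hat\phi_\alpha(q)=\sqrt{\Delta^\alpha(q)}\phi_\alpha(q)$, the inner sum is a genuine pairing, $\sum_{\alpha}\frac{\bu_j^\alpha(z)}{\sqrt{\Delta^\alpha(q)}}S^{\hat{\underline{\gamma}}}_{\ \hat{\underline{\alpha}}}(z)=(\hat\phi_\gamma(q),\cS(\bu_j(z)))$, so that for $\bu_j=\phi_{\alpha_j}/(z_j-z)$ it collapses to
\[
\sum_{\alpha}\frac{\bu_j^\alpha(z)}{\sqrt{\Delta^\alpha(q)}}S^{\hat{\underline{\gamma}}}_{\ \hat{\underline{\alpha}}}(z)\Big|_{\bu_j=\phi_{\alpha_j}/(z_j-z)} = \frac{1}{z_j-z}S^{\hat{\underline{\gamma}}}_{\ \alpha_j}(z),
\]
converting the doubly-hatted $S$-matrix into the mixed one with a \emph{classical} lower index. (ii) As $S^{\hat{\underline{\gamma}}}_{\ \alpha_j}(z)$ is independent of $z_j$ and $1/(z_j-z)=\sum_{m\ge0}z^m z_j^{-m-1}$, the extraction $[z_j^{-1}]$ of $\tilde{\xi}^{\alpha_j}(z_j,X_j)/(z_j-z)$ produces precisely $(\tilde{\xi}^{\alpha_j}(z,X_j))_+$, the nonnegative-$z$ truncation of the open building block. (iii) Finally, since $\cS(b)=b+O(z^{-1})$ carries only non-positive powers of $z$ while $\tilde{\xi}^\alpha(z,X_j)$ has lowest order $z^{-2}$, the negative part $(\tilde{\xi}^\alpha)_-$ contributes nothing after the outer $(\cdot)_+$, whence $((\tilde{\xi}^\alpha)_+S^{\hat{\underline{\gamma}}}_{\ \alpha})_+=(\tilde{\xi}^\alpha S^{\hat{\underline{\gamma}}}_{\ \alpha})_+$. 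Combining (i)--(iii) and leaving the outer factor $R(-z)_\gamma^{\ \beta}$ untouched reproduces the defining formula for $(\tilde{\cL}^{O})^\beta_k(l_j)$, which finishes the reduction.

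I expect the main obstacle to be bookkeeping rather than conceptual: carefully tracking which variant of the $S$-matrix (doubly-hatted versus mixed classical/quantum lower index) appears, and verifying that $[z_j^{-1}]$, the two truncations $(\cdot)_+$ on the distinct variables $z$ and $z_j$, and the $Q$-versus-$q$ conventions all interchange legitimately as identities of formal series in $Q$ and the $X_j$. One should also confirm that no unstable leaf arises here --- for $2g-2+n>0$ every ordinary leaf meets a stable vertex, so only heights $k\ge 0$ enter and Proposition~\ref{prop:open-generating-function}(3), rather than its disk or annulus counterparts, is the correct input.
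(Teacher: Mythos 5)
Your proposal is correct and follows essentially the same route as the paper, which proves this theorem simply by combining Theorem \ref{thm:descendant-graph-sum} with Proposition \ref{prop:open-generating-function}(3); the leaf-by-leaf identity you verify in steps (i)--(iii) is exactly the computation implicit in the paper's definition \eqref{eqn:A-model-open-leaf} of the open leaf $(\tilde{\cL}^{O})^\beta_k$. Your elaboration of the details the paper leaves unstated (the collapse of the doubly-hatted $S$-matrix to the mixed $S^{\hat{\underline{\gamma}}}_{\ \alpha_j}$, the extraction $[z_j^{-1}]$ producing $(\tilde{\xi}^{\alpha_j}(z,X_j))_+$, and the vanishing of $\bigl(S(z)(\tilde{\xi}^{\alpha})_-\bigr)_+$ because $S$ carries only nonpositive powers of $z$) is accurate.
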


\begin{defn}\rm We define 
    \[
        F_{g,n}(\bt;X_1,\dots,X_n) := F_{g,n}(\bt,1;X_1,\dots,X_n).
    \]
    By Remark \ref{rmk:Q-is-1} and Proposition \ref{prop:open-generating-function}, the restriction of $F_{g,n}$ to $Q=1$ is well-defined.
\end{defn}

Theorem \ref{thm:open-A-graph-sum} implies
\begin{coro}\label{thm:open-A-graph-sum-Qone}\rm
    Let $\omega_A^O(\vec{\Ga}) = \widetilde{\omega}_A^O(\vec{\Ga})\big|_{Q=1}$. For $2g-2+n>0$, we have
    \[
        F_{g,n}(\bt;X_1,\dots,X_n) = \sum_{\vec{\Ga}\in {\bf \Ga}_{g,n}(\bP^1)}\frac{\omega_A^O(\vec{\Ga})}{|\Aut(\vec{\Ga})|}.
    \]
\end{coro}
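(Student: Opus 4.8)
The plan is to read off the corollary by restricting the identity of Theorem~\ref{thm:open-A-graph-sum} to $Q=1$ term by term, so the substance lies entirely in justifying that this restriction is legitimate on the right-hand side of the graph-sum formula. The first point I would record is that, for fixed $(g,n)$, only finitely many labeled graphs $\vec{\Ga}\in{\bf\Ga}_{g,n}(\bP^1)$ carry a nonzero weight: the total genus and the number $n$ of ordinary leaves bound the number of vertices and edges, and on each vertex $v$ the integral $\langle\prod_{h\in H(v)}\tau_{k(h)}\rangle_{g(v)}$ vanishes unless $\sum_{h\in H(v)}k(h)=3g(v)-3+\val(v)$, so only finitely many height labelings contribute. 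In particular the graph sum is effectively a finite sum, and restricting it to $Q=1$ commutes with the summation.

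Next I would verify that each individual weight $\widetilde{\omega}^O_A(\vec{\Ga})$ admits a well-defined restriction to $Q=1$ by checking that all of its $Q$-dependence flows through the single variable $q=Qe^{t^1}$. The factors $\Delta^\alpha(q)$ (and their square roots in $\bar{S}_T$) are explicit algebraic functions of $q$; the edge weights $\cE^{\alpha,\beta}_{k,l}$ and the dilaton-leaf weights $(\cL^1)^\beta_k$ are built from the $R$-matrix, whose entries lie in $\bar{S}_T[[q,t^0]]$ by the $R$-matrix theorem; and the vertex integrals $\langle\prod\tau_{k(h)}\rangle_{g(v)}$ carry no $Q$ at all. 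In the open-leaf weight $(\tilde{\cL}^O)^\beta_k$, the series $\tilde{\xi}^\alpha(z,X)$ likewise carry no $Q$, while the remaining factor $S^{\hat{\underline{\gamma}}}_{\ \alpha}$ is assembled from the $\cS$-operator, whose restriction $\cS(b)\big|_{Q=1}$ is well-defined by Remark~\ref{rmk:Q-is-1}: the divisor equation rewrites each explicit $Q^\beta$ as $Q^\beta e^{\int_\beta t'}=q^d$, turning $\cS$ into a genuine power series in $q$. Consequently $\omega^O_A(\vec{\Ga})=\widetilde{\omega}^O_A(\vec{\Ga})\big|_{Q=1}$ is well-defined for every $\vec{\Ga}$.

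Finally I would assemble these facts: restricting the equality of Theorem~\ref{thm:open-A-graph-sum} to $Q=1$, the left-hand side becomes $F_{g,n}(\bt;X_1,\dots,X_n)$ by definition, while the right-hand side becomes $\sum_{\vec{\Ga}}\omega^O_A(\vec{\Ga})/|\Aut(\vec{\Ga})|$ by the termwise restriction just established, which is exactly the claimed formula. I expect the only delicate step to be the bookkeeping of the second paragraph, namely confirming that \emph{every} $Q$-carrying factor of $\widetilde{\omega}^O_A(\vec{\Ga})$ genuinely reorganizes into a power series in $q$ rather than a divergent series in $Q$ at $Q=1$; once the $\cS$-operator and the $R$-matrix are treated via Remark~\ref{rmk:Q-is-1} and the $R$-matrix theorem, the remaining factors are manifestly algebraic or polynomial in $q$ and present no difficulty.
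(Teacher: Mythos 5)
Your proposal is correct and follows essentially the same route as the paper: the paper deduces the corollary directly by restricting Theorem~\ref{thm:open-A-graph-sum} to $Q=1$, with the well-definedness of that restriction resting on Remark~\ref{rmk:Q-is-1} (the divisor equation repackaging $Q^\beta$ into $q=Qe^{t^1}$) and on the $R$-matrix entries lying in $\bar{S}_T[[q,t^0]]$, exactly the points you verify. Your additional finiteness bookkeeping for the graph sum is a harmless elaboration of what the paper leaves implicit.
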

\begin{convention}
    From this point onwards, we use the specialization $Q=1$, $q=e^{t^1}$ in the calculation.
\end{convention}

For $g,n,m\geq 0$, $a_i\in\bZ_{\geq 0}$ and $\gamma_i\in H^*_T(\bP^1)$, we also define the following double correlators as the generating function of open Gromov-Witten invariants with $m$ descendant insertions and $n$ boundary conpoments:
\begin{align*}
    & \llangle \tau_{a_1}(\gamma_1)\dots \tau_{a_m}(\gamma_m)\rrangle_{g,m,n}^{\OGW}(\bt,X_1,\dots,X_n)
    \\
    &:=  \sum_{\beta\in E(\CP)\atop {l\geq 0}}\sum_{\vec{\mu}=(\mu_1,\dots,\mu_n)\atop{\mu_i\in\bZ_{\neq 0}}}\frac{1}{l!}\langle\tau_{a_1}(\gamma_1)\dots \tau_{a_m}(\gamma_m),\bt^l\rangle_{g,\beta',\vec{\mu}}^{\OGW} \prod_{i=1}^{n}X_i^{\mu_i}.
\end{align*}
We also have
\begin{equation*}\label{eqn:open-correlator}
    \begin{aligned}
    & \llangle \tau_{a_1}(\gamma_1)\dots \tau_{a_m}(\gamma_m)\rrangle_{g,m,n}^{\OGW}(\bt,X_1,\dots,X_n)
    \\
    & = \sum_{k_1,\dots,k_n\geq 0\atop {\alpha_1,\dots,\alpha_n \in \{1,2\}}}
        \llangle\tau_{a_1}(\gamma_1),\dots ,\tau_{a_m}(\gamma_m),\phi_{\alpha_1}\psi^{k_1},\dots,\phi_{\alpha_n}\psi^{k_n}\rrangle_{g,n}^{\bP^1,T}
        \prod_{j=1}^{n}\tilde{\xi}_{k_j}^{\alpha_j}(X_j).
\end{aligned}
\end{equation*}

For a labeled graph $\vec{\Ga}\in{\bf\Ga}_{g,m+n}(\bP^1)$ with $L^o(\Ga) = \{l_1,\dots,l_{m+n}\}$,
we introduce a new weight $\sw_A(\vec{\Ga})$ for the labeled graph $\vec{\Ga}\in{\bf \Ga}_{g,m+n}(\bP^1)$ as
\begin{equation*}
    \begin{aligned}
        \sw_A(\vec{\Ga}) = &\prod_{v\in V(\Ga)}\Big(\sqrt{\Delta^{\beta(v)}(q)}\Big)^{2g(v)-2+\val(v)}\langle\prod_{h\in H(v)}\tau_{k(h)}\rangle_{g(v)}
        \prod_{e\in E(\Ga)}\cE^{\beta(v_1(e)),\beta(v_2(e))}_{k(h_1(e)),k(h_2(e))}
        \\
        & \cdot \prod_{l\in L^1(\Ga)}(\cL^1)^{\beta(l)}_{k(l)}(l)\prod_{j=1}^{m}(\cL^{\bf u})_{k(l_j)}^{\beta(l_j)}(l_j)\prod_{j=1}^n(\tilde{\cL}^{O})^{\beta(l_{m+j})}_{k(l_{m+j})}(l_{m+j})\Big|_{Q=1}.
    \end{aligned}
\end{equation*}

\begin{theorem}\label{thm:A-SYZ-open}
    Suppose $g,m\geq 0$, $n\geq 1$ and $2g-2+m+n>0$. It holds that
    $$
        \llangle {\bf u}_1,\dots,{\bf u}_m\rrangle_{g,m,n}^{\OGW}(\bt,X_1,\dots,X_n) = \sum_{\vec{\Ga}\in{\bf \Ga}_{g,m+n}(\bP^1)}\frac{\sw_A(\vec{\Ga})}{|\Aut(\vec{\Ga})|}.
    $$
\end{theorem}

\subsection{Explicit formula of $F_{0,1}$}\label{sec:explicit-disk}
Following from Proposition \ref{prop:open-generating-function}, the generating function $F_{0,1}$ is given by
\[
    F_{0,1}(\bt;X) = \sum_{d>0}\Big(
        J_1(-\sv/d)D^1(d)X^{-d}+
        J_2(\sv/d)D^2(d)X^d
    \Big),
\]
where $J_\alpha(z) := (J(z),\phi_\alpha)_{\CP,T}$ are the components of the $J$-function in Section \ref{sec:J-function}.

By Equation (\ref{eq:J-function}), for $d>0$
\begin{equation*}
    \begin{aligned}
        J^1(-\sv/d)&= -\sv J_1(-\sv/d)
        = e^{-dt^0/\sv}(-\sv/d)^d\Ga(d+1)I_d(-2\sqrt{q}d/\sv),
    \\
        J^2(\sv/d) &= \sv J_2(\sv/d)
        = e^{dt^0/\sv}(\sv/d)^d\Ga(d+1)I_d(2\sqrt{q}d/\sv).
    \end{aligned}
\end{equation*}
We get
\[
    F_{0,1}(\bt;X) = \sum_{d>0}e^{-dt^0/\sv}\frac{\sv}{d^2}I_d(-2\sqrt{q}d/\sv)X^{-d}
    + \sum_{d>0}e^{dt^0/\sv}\frac{\sv}{d^2}I_d(2\sqrt{q}d/\sv)X^d.
\]
Let $q, \sv$ be positive real numbers. By the symmetry of the modified Bessel function $I_\alpha(x)$ (see Appendix \ref{sec:Bessel}), we have
\begin{equation}\label{eqn:explicit-disk}
        F_{0,1}(\bt;X) = \sum_{d\in\bZ_{\neq 0}}e^{dt^0/\sv}\frac{\sv}{d^2}I_d(2\sqrt{q}d/\sv)X^d.
\end{equation}

\section{Mirror curve and topological recursion}\label{sec:Bmodel}
\subsection{Mirror curve}\label{sec:curve}
Let $Y$ be a coordinate on $\bC^*$ and let $q=e^{t^1},\sv$ be positive real numbers. 
The $T$-equivariant superpotential $W:\bC^*\rightarrow \bC$ is defined as
\[
    W(Y) = t^0 + \frac{\sv}{2}\log q + Y + \frac{q}{Y} - \sv\log Y.
\]
Consider the spectral curve $C_q$ defined as follows:
\[
    C_q = \{(x,y)\in\bC^2: x = W(e^y)\}.
\]
Let $X= e^{-x/\sv}$, $Y=e^y$. The curve $C_q$ has a parameterization:
\begin{equation*}
    \left\{
        \begin{aligned}
            x(Y) &= t^0 + \frac{\sv}{2}\log q + Y + \frac{q}{Y} - \sv\log Y,
            \\
            y(Y) &= \log Y.
        \end{aligned}
    \right.
\end{equation*}
Let $\bP^1$ be the compactification of $\bC^*$ with $Y\in\bC^*\subset \bP^1$ as its coordinate.
Consider the differential $dx(Y)= (Y-q/Y-\sv)dy$. The branch points $P_\alpha$ of $dx$ are given by
\[
    P_1 = \frac{\sv - \sv\sqrt{1+4q/\sv^2}}{2},
    \quad 
    P_2 = \frac{\sv + \sv\sqrt{1+4q/\sv^2}}{2}.
\]

Near each branch point $Y=P_\alpha$, we choose a local coordinates $\zeta_\alpha$ such that 
\[
    x=\check{u}^\alpha-\zeta_\alpha^2,
\quad
    y = \check{v}^\alpha - \sum_{k=1}^\infty h^\alpha_k(q)\zeta^k_\alpha.
\]
Let $\lambda = xdy$ be the Liouville form on $\bC^2$, $\Phi := \lambda|_{C_q}$.
Let $B(Y_1,Y_2)$ be a holomorphic bidifferential on $\bP^1$ defined by
\[
    B(Y_1,Y_2) = \frac{dY_1dY_2}{(Y_1-Y_2)^2}.
\]
Around $(P_\alpha, P_\beta)$, we expand $B(Y_1,Y_2)$ as 
\[
    B(\zeta_\alpha,\zeta_\beta) = \Big(\frac{\delta_{\alpha\beta}}{(\zeta_\alpha - \zeta_\beta)^2}+\sum_{k,l\geq 0}B^{\alpha,\beta}_{k,l}\zeta^k_\alpha\zeta^l_\beta\Big)
    d\zeta_\alpha d\zeta_\beta.
\]
Given $\alpha=1,2$ and $d\in\bZ_{\geq 0}$, define
    \[
        d\xi_{\alpha,d}(p) := (2d-1)!!2^{-d}\Res_{p'\rightarrow P_\alpha}
        B(p,p')(\sqrt{-1}\zeta_\alpha)^{-2d-1}.
    \]
    Then $d\xi_{\alpha,d}$ is a meromorphic 1-form on $\bP^1$ with a single pole of order $2d+2$ at $P_\alpha$.
    In the local coordinate $\zeta_\alpha$ near $P_\alpha$, $d\xi_{\alpha, d}$ has the following expansion:
    \[
        d\xi_{\alpha, d} = \left(
            \frac{-(2d+1)!!}{2^d\sqrt{-1}^{2d+1}\zeta_\alpha^{2d+2}}+ \text{analytic part in $\zeta_\alpha$}
        \right)d\zeta_\alpha.
    \]    
    In particular, we have 
    \[
        d\xi_{\alpha,0} = \sqrt{\frac{-2}{\Delta^\alpha(q)}}d\left(\frac{P_\alpha}{Y-P_\alpha}\right).
    \]

\subsection{The $\fh_X$-operator}
The function $X=e^{-x/\sv}$ has an essential singularity at $Y=0$:
\[
    X= e^{-x/\sv} = \frac{e^{-t^0/\sv}}{\sqrt{q}}Ye^{-(Y+q/Y)/\sv}.
\]
We also notice that $dX/X$ is a meromorphic differential on $\bP^1$:
\[
    \frac{dX}{X} = -\frac{dx}{\sv} = \frac{Y^2-\sv Y - q}{-\sv Y^2}dY = \frac{(Y-P_1)(Y-P_2)}{-\sv Y^2}dY.
\]
Let $D_\epsilon$ be a punctured disk around $Y=0$: 
\[
    D_\epsilon := \{Y\in\bC^*: 0<|Y|<\ep\},
\]
where $\ep$ is a small positive real number such that $D_\epsilon$ does not contain
any ramification points $\{P_1,P_2\}$.
In the punctured disk $D_\epsilon$, $Y$ can be expanded as a power series of $X$ by the Lagrange inversion theorem.
At $Y=0$, we define the $\fh_X$-operator, which expands
holomorphic functions $f(Y)$ and holomorphic differential forms $\theta(Y)$ on $D_\epsilon$ to Laurent series in $X$, by their local behaviors at $Y=0$:
\begin{itemize}
    \item For a holomorphic function $f(Y)$ on $D_\epsilon$, let
    \[
        \fh_X(f) := \sum_{\mu\in\bZ_{\neq 0}} \Big(\mathop{\Res}\limits_{Y\rightarrow 0}f(Y)X^{-\mu}\frac{dX}{X}\Big)X^{\mu}.
    \]
    \item For a holomorphic differential form $\theta(Y)$ on $D_\epsilon$, let 
    \[     
        \fh_X(\theta) := \sum_{\mu\in\bZ_{\neq 0}} \Big(\mathop{\Res}\limits_{Y\rightarrow 0} \theta(Y) X^{-\mu}\Big)\frac{X^{\mu}}{\mu}.
    \]
    If $f$ is a holomorphic function on $D_\ep$ such that $\theta(Y)=df$, then $\fh_X(\theta)=\fh_X(f)$ by the integrations by parts.
\end{itemize}
For multi-holomorphic functions (\emph{resp.} forms) on $(D_\epsilon)^{\times n}$, we define $\fh_{X_1,\dots,X_n}$ as follows:
\begin{itemize}
    \item Let $f(Y_1,\dots,Y_n)$ be a holomorphic function on $(D_\epsilon)^{\times n}$, we define
    \[
        \fh_{X_1,\dots,X_n}(f) := \sum_{\mu_1,\dots,\mu_n\in\bZ_{\neq 0}}
        \Big(
            \mathop{\Res}\limits_{Y_1\rightarrow 0}\cdots \mathop{\Res}\limits_{Y_n\rightarrow 0}f(Y_1,\dots,Y_n)\prod_{i=1}^{n}X^{-\mu_i}_i\frac{dX_i}{X_i}
        \Big)\prod_{i=1}^{n}X^{\mu_i}_i.
    \]
    \item Let $\theta(Y_1,\dots,Y_n)$
    be a holomorphic differential form from the set of sections $\Gamma((D_\ep)^{\times n}, \omega_{C_q}^{\boxtimes n})$, we define
    \[  
        \fh_{X_1,\dots,X_n}(\theta) := \sum_{\mu_1,\dots,\mu_n\in\bZ_{\neq 0}}
        \Big(
            \mathop{\Res}\limits_{Y_1\rightarrow 0}\cdots \mathop{\Res}\limits_{Y_n\rightarrow 0}\theta(Y_1,\dots,Y_n)\prod_{i=1}^{n}X^{-\mu_i}_i
        \Big)\prod_{i=1}^{n}\frac{X^{\mu_i}_i}{\mu_i}.
    \] 
\end{itemize}

The $\fh_X$-operator plays an essential role in the definition of open leaves in Section \ref{sec:TR}.
Let
\[
    \xi_{\alpha,0}(Y) = \sqrt{\frac{-2}{\Delta^\alpha(q)}}\frac{P_\alpha}{Y-P_\alpha},
\]
which is a meromorphic function on $\bP^1$. It has a simple pole at $P_\alpha$ and is holomorphic elsewhere.
Let $$\eta := \xi_{2,0}-\sqrt{-1}\xi_{1,0},\quad \chi := \xi_{2,0}+\sqrt{-1}\xi_{1,0}.$$
Let $q,\sv>0$, $\Delta^2(q)=\sv\sqrt{1+4q/\sv^2}>0$.
Then we have
\begin{equation*}
    \begin{aligned}
        \eta &= \sqrt{\frac{-2}{\Delta^2(q)}}\left(\frac{P_2}{Y-P_2}-\frac{P_1}{Y-P_1}\right)
        \\
        &= \sqrt{\frac{-2}{\Delta^2(q)}}\frac{Y\Delta^2(q)}{(Y-P_1)(Y-P_2)}
        \\
        &= \frac{1}{-\sv}\sqrt{-2\Delta^2(q)}\frac{dY}{dX/X}\frac{1}{Y},
    \end{aligned}
\end{equation*}
and
\begin{equation*}
    \begin{aligned}
        \chi &= \sqrt{\frac{-2}{\Delta^2(q)}}\left(\frac{P_2}{Y-P_2}+\frac{P_1}{Y-P_1}\right)
        \\
        &= \sqrt{\frac{-2}{\Delta^2(q)}}\frac{\sv Y+2q}{(Y-P_1)(Y-P_2)}
        \\
        &= -\sqrt{\frac{-2}{\Delta^2(q)}}\left(
            \frac{dY}{dX/X}\frac{1}{Y} + \frac{2qdY}{\sv dX/X}\frac{1}{Y^2}
        \right).
    \end{aligned}
\end{equation*}

Apply the operator $\fh_X$ to the meromorphic functions $\eta$ and $\chi$, we obtain two Laurent series:
\[
    \fh_X(\eta) := \sum_{\mu \in\bZ_{\neq 0}}A_\mu X^\mu, \quad \fh_X(\chi) := \sum_{\mu\in\bZ_{\neq 0}}B_\mu X^\mu.
\]
By calculation,
\begin{equation}\label{eqn:eta}
    \begin{aligned}
        A_\mu &= \Res_{Y=0}\eta X^{-\mu}\frac{dX}{X}
        \\
        &= \frac{1}{-\sv}\sqrt{-2\Delta^2(q)}e^{\mu t^0/\sv}\sqrt{q}^\mu\Res_{Y=0}e^{\mu(Y+q/Y)/\sv}\frac{dY}{Y^{\mu+1}}
        \\
        &= \frac{1}{-\sv}\sqrt{-2\Delta^2(q)}e^{\mu t^0/\sv}I_\mu\Big(\frac{2\sqrt{q}\mu}{\sv}\Big),
    \end{aligned}
\end{equation}
and
\begin{equation}\label{eqn:chi}
    \begin{aligned}
        B_\mu &= \Res_{Y=0}\chi X^{-\mu}\frac{dX}{X}
        \\
        &= -\sqrt{\frac{-2}{\Delta^2(q)}}e^{\mu t^0/\sv}\sqrt{q}^\mu\Res_{Y=0}e^{\mu(Y+q/Y)/\sv}\left(\frac{dY}{Y^{\mu+1}}
        +\frac{2q}{\sv}\frac{dY}{Y^{\mu+2}}\right)
        \\
        &= -\sqrt{\frac{-2}{\Delta^2(q)}}e^{\mu t^0/\sv}\left(I_\mu\Big(\frac{2\sqrt{q}\mu}{\sv}\Big)+ \frac{2\sqrt{q}}{\sv}I_{\mu+1}\Big(\frac{2\sqrt{q}\mu}{\sv}\Big)\right)
        \\
        &= -\sqrt{\frac{-2}{\Delta^2(q)}}e^{\mu t^0/\sv}\frac{2\sqrt{q}}{\sv}I'_{\mu}\Big(\frac{2\sqrt{q}\mu}{\sv}\Big).
\end{aligned}
\end{equation}
\begin{remark}\rm
    More details about the residue calculation can be found in Appendix \ref{sec:residue}.
\end{remark}

\subsection{Chekhov-Eynard-Orantin topological recursion}\label{sec:TR}
Let $\omega_{g,n}$ be the differential forms
defined recursively by the Chekhov-Eynard-Orantin topological recursion \cite{EO07}:
$$\omega_{0,1}=0,\quad \omega_{0,2}=B(Y_1,Y_2).$$
For $2g-2+n>0$,
\begin{equation*}
    \begin{aligned}
        \omega_{g,n}(Y_1,\dots,Y_n) = \sum_{\alpha\in\{1,2\}}
        &\Res_{Y\rightarrow P_\alpha}\frac{-\int_{\xi=Y}^{\hat{Y}}B(Y_n,\xi)}{2(\log(Y)-\log(\hat{Y}))dx}\Big(
            \omega_{g-1,n+1}(Y,\hat{Y},Y_1,\dots,Y_{n-1})
        \\
        &+ 
            \sum_{g_1+g_2=g}\sum_{I\sqcup J = \{1,\dots,n-1\}}\omega_{g_1,|I|+1}(Y,Y_{I})\omega_{g_2,|J|+1}(\hat{Y},Y_J)
        \Big),
    \end{aligned}
\end{equation*}
where $Y\neq P_\alpha$ is in a neighbourhood of $P_\alpha$, and $\hat{Y}\neq Y$ is the conjugate point of $Y$ such that $x(\hat{Y})=x(Y)$.

We introduce the following notations:
\begin{enumerate}
    \item Given $\alpha = 1,2$ and $k\in \bZ_{\geq 0}$, define
        \[
            W^\alpha_k := d((-\frac{d}{dx})^k(\xi_{\alpha,0})),
        \]
        \[
            \theta_\alpha(z) := \sum_{k=0}^{\infty}d\xi_{\alpha,k}z^k,\quad
            \hat\theta_\alpha(z):= \sum_{k=0}^\infty W^\alpha_kz^k.
        \]
    \item The B-model $R$-matrix $\check{R}_\beta^{\ \alpha}(z)$ (which is a power series of $z$) is defined by asymptotic expansion
        \[
            \check{R}_\beta^{\ \alpha}(z) \sim \frac{\sqrt{z}e^{-\check{u}^\alpha/z}}{2\sqrt{\pi}}\int_{\gamma_\alpha}e^{x/z}d\xi_{\beta,0}.
        \]
        Here $\gamma_\alpha$ is the Lefschetz thimble of the map $x$, i.e. $x(\gamma_\alpha)-\check{u}^\alpha \in \bR_{\geq 0}$.
    \item Given $\alpha,\beta\in\{1,2\}$, $k,l\geq 0$, define
        \begin{equation*}
            \begin{aligned}
                \check{B}^{\alpha,\beta}_{k,l} &:= \frac{(2k-1)!!(2l-1)!!}{2^{k+l+1}}B^{\alpha,\beta}_{2k,2l}
                \\
                &\ = 
                [z^kw^l]\Big(\frac{1}{z+w}\Big(\delta_{\alpha\beta}-\sum_{\gamma\in\{1,2\}}\check{R}_\gamma^{\ \alpha}(-z)\check{R}_\gamma^{\ \beta}(-w)\Big)\Big).
            \end{aligned}
        \end{equation*}
    \item Given $\alpha=1,2$ and $k\in \bZ_{\geq 1}$, define     
        \begin{equation*}
            \begin{aligned}
                \check{h}^\alpha_k &:= -\frac{\sqrt{-1}^{2k-1}(2k-1)!!}{2^{k-1}}h^\alpha_{2k-1}
                \\
                & \ = [z^{k-1}]\Big(\sum_\beta\sqrt{-1}h^\beta_1\check{R}_\beta^{\ \alpha}(-z)\Big).
            \end{aligned}
        \end{equation*}
\end{enumerate}

In similar to Section \ref{sec:A-model-graph-sum}, we introduce the B-model graph sum formula for $\omega_{g,n}$.
Let $\bp=(Y_1,\dots,Y_n)\in(\bP^1)^n$.
For a labeled graph $\vec{\Ga}\in{\bf\Ga}_{g,n}(\bP^1)$ with $L^o(\Ga) = \{l_1,\dots,l_n\}$, and $\bullet = \bp$ or $O$,
we assign the weight of $\vec{\Ga}$ as
\begin{equation*}
    \begin{aligned}
            \omega_B^\bullet(\vec{\Ga}) &= (-1)^{g(\vec{\Ga})-1+n}\prod_{v\in V(\Ga)}\Big(\frac{h^{\beta(v)}_1}{\sqrt{2}}\Big)^{2-2g(v)-\val(v)}
        \langle\prod_{h\in H(v)}\tau_{k(h)}\rangle_{g(v)}
        \\
        & \ \ \ \ \times \prod_{e\in E(\Ga)}\check{B}^{\beta(v_1(e)),\beta(v_2(e))}_{k(h_1(e)),k(h_2(e))}
        \prod_{l\in L^1(\Ga)}\frac{-1}{\sqrt{-2}}\check{h}^{\beta(l)}_{k(l)}\prod_{j=1}^n 
        (\check{\cL}^\bullet)^{\beta(l_j)}_{k(l_j)}(l_j),
    \end{aligned}
\end{equation*}
where
\begin{itemize}
    \item (descendant leaf)
        \[
            (\check{\cL}^\bp)^{\beta(l_j)}_{k(l_j)}(l_j) = \frac{1}{\sqrt{-2}}d\xi_{\beta(l_j),k(l_j)}(Y_j),
        \]
    \item (open leaf)
        \[
            (\check{\cL}^O)^{\beta(l_j)}_{k(l_j)}(l_j) = \frac{1}{\sqrt{-2}} \fh_{X_j}(d\xi_{\beta(l_j),k(l_j)}(Y_j)).
        \]
\end{itemize}

We have the B-model graph sum formula from \cite[Theorem 3.7]{DOSS}:
\begin{theorem}[Dunin-Barkowski-Orantin-Shadrin-Spitz \cite{DOSS}]\rm
    \label{thm:B-graph-sum}
    For $2g-2+n>0$,
    \begin{equation*}
        \begin{aligned}
            \omega_{g,n}(\bp) &= \sum_{\vec{\Ga}\in{\bf \Ga}_{g,n}(\bP^1)}\frac{\omega_B^\bp(\vec{\Ga})}{|\Aut(\vec{\Ga})|},
            \\
            \fh_{X_1,\dots,X_n}(\omega_{g,n}) &= \sum_{\vec{\Ga}\in{\bf \Ga}_{g,n}(\bP^1)}\frac{\omega_B^O(\vec{\Ga})}{|\Aut(\vec{\Ga})|}.
        \end{aligned}
    \end{equation*}
\end{theorem}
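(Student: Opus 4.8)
The plan is to obtain the first identity as a direct specialization of the Dunin-Barkowski--Orantin--Shadrin--Spitz local graph-sum formula \cite[Theorem 3.7]{DOSS}, and then to deduce the second identity from the first by applying the operator $\fh_{X_1,\dots,X_n}$. First I would recall the DOSS framework: for a spectral curve with simple ramification points $P_1,P_2$, the invariants $\omega_{g,n}$ produced by the Chekhov-Eynard-Orantin recursion admit a Givental-type graph sum whose building blocks are determined entirely by the local data of the curve at the $P_\alpha$ --- namely the expansion coefficients $B^{\alpha,\beta}_{k,l}$ of the Bergman kernel, the local times $h^\alpha_k(q)$ in the coordinates $\zeta_\alpha$, and the $R$-matrix extracted from the asymptotic expansion of the Laplace transforms $\int_{\gamma_\alpha}e^{x/z}d\xi_{\beta,0}$. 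I would then check that the quantities $\check{R}_\beta^{\ \alpha}(z)$, $\check{B}^{\alpha,\beta}_{k,l}$ and $\check{h}^\alpha_k$ introduced in Section~\ref{sec:TR}, together with the descendant leaves $(\check{\cL}^\bp)^{\beta(l_j)}_{k(l_j)}(l_j)=\tfrac{1}{\sqrt{-2}}\,d\xi_{\beta(l_j),k(l_j)}(Y_j)$ and the vertex, edge, and dilaton-leaf factors assembled in $\omega_B^\bp(\vec\Ga)$, are precisely the DOSS weights for $C_q$. This is mostly a matter of matching normalizations: one must track the constants $\tfrac{1}{\sqrt{-2}}$ on the leaves, the factor $(h^{\beta(v)}_1/\sqrt{2})^{2-2g(v)-\val(v)}$ at the vertices, and the global sign $(-1)^{g(\vec\Ga)-1+n}$, all of which originate from the chosen normalizations of $d\xi_{\alpha,0}$ and of $\zeta_\alpha$. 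Once these ingredients are identified, the first identity is immediate.

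For the second identity I would apply $\fh_{X_1,\dots,X_n}$ to both sides of the first. The key point is that $\fh_{X_1,\dots,X_n}$ factors as a composition of $n$ single-variable operators $\fh_{X_j}$, each a residue at $Y_j=0$, and that in the finite sum $\sum_{\vec\Ga}\omega_B^\bp(\vec\Ga)/|\Aut(\vec\Ga)|$ the dependence on $\bp=(Y_1,\dots,Y_n)$ enters only through the $n$ descendant leaves, one per ordinary leaf $l_j$. Because each $d\xi_{\alpha,d}$ is holomorphic on the punctured disk $D_\epsilon$ --- its only pole being at $P_\alpha\notin D_\epsilon$ --- the single-variable residue defining $\fh_{X_j}$ is well-defined on each leaf, and by multilinearity of the graph weight in its leaves one may pull $\fh_{X_1,\dots,X_n}$ inside the sum and onto the individual leaves. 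This replaces each $(\check{\cL}^\bp)^{\beta(l_j)}_{k(l_j)}(l_j)$ by $\tfrac{1}{\sqrt{-2}}\,\fh_{X_j}\bigl(d\xi_{\beta(l_j),k(l_j)}(Y_j)\bigr)=(\check{\cL}^O)^{\beta(l_j)}_{k(l_j)}(l_j)$, converting $\omega_B^\bp(\vec\Ga)$ into $\omega_B^O(\vec\Ga)$, which yields the second identity.

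The main obstacle is the careful bookkeeping of normalizations in the first step: confirming that the DOSS $R$-matrix agrees with $\check{R}$ under the conventions involving $\sqrt{-1}$ and $\sqrt{-2}$ that appear in $d\xi_{\alpha,d}$ and in $\check{h}^\alpha_k$, and that the local times $h^\alpha_k(q)$ feeding the vertex and dilaton factors coincide with those used by DOSS. The $\fh$-step is essentially formal once one observes that $\fh_{X_1,\dots,X_n}$ commutes with the finite graph sum and acts leaf-by-leaf; the only point to verify there is the holomorphicity of $d\xi_{\alpha,d}$ on $D_\epsilon$, which is guaranteed because $D_\epsilon$ avoids the branch points.
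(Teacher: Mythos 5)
Your proposal is correct and takes essentially the same route as the paper, which offers no independent proof but simply imports the graph-sum formula from \cite[Theorem 3.7]{DOSS} with the local data $(\check{R},\check{B},\check{h})$ of the curve $C_q$, leaving the second identity as the formal consequence you spell out. Your leaf-by-leaf argument is exactly the intended one: since the $\bp$-dependence of $\omega_B^\bp(\vec{\Ga})$ enters only through the ordinary leaves, and $\fh_{X_1,\dots,X_n}$ is a product of single-variable residues well-defined on the (finitely many nonvanishing) graph weights because each $d\xi_{\alpha,d}$ is holomorphic on $D_\epsilon$, applying it term by term replaces each descendant leaf $(\check{\cL}^\bp)^{\beta(l_j)}_{k(l_j)}(l_j)$ by $(\check{\cL}^O)^{\beta(l_j)}_{k(l_j)}(l_j)$, which is precisely the second identity.
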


We define the B-model open primary potentials as follows,
\begin{itemize}
    \item Define the B-model disk potential $W_{0,1}(t^0,q;X)$ as a Laurent series in $X$ with constant term zero such that
        \[
            \Big(-\frac{1}{\sv}X\frac{d}{d X}\Big)W_{0,1}(t^0,q;X) = \fh_X\Big(\frac{\partial \Phi}{\partial t^0}\Big). 
        \]
    \item Let $\tilde{\omega}_{0,2}(Y_1,Y_2)$ be the meromorphic 2-form on $\bC^*\times \bC^*$,
    $$\tilde{\omega}_{0,2}(Y_1,Y_2):=\omega_{0,2}(Y_1,Y_2)-\frac{dX_1dX_2}{(X_1-X_2)^2}.$$
    $\tilde{\omega}_{0,2}$ is holomorphic on $(D_\ep)^{\times 2}$.
    Define the B-model annulus invariants by
        \[
            W_{0,2}(t^0,q;X_1,X_2) = \fh_{X_1,X_2}(\tilde{\omega}_{0,2}(Y_1,Y_2)).
        \]
    \item For $2g-2+n>0$, $\omega_{g,n}$ is holomorphic at $(D_\ep)^{\times n}$, we define
        \[
            W_{g,n}(t^0,q; X_1,\dots,X_n) = \fh_{X_1,\dots,X_n}(\omega_{g,n}(\bp)).
        \]
\end{itemize}

\subsection{SYZ T-dual}\label{sec:SYZmirror}
\begin{definition}[equivariant K-theoretic framing]
    We define 
    $$
        \widetilde{\ch}_z: K_T(\CP)\rightarrow H^*_T(\CP;\bQ)\left[\!\left[\frac{\sv}{z}\right]\!\right]
    $$
    by the following two properties:
    \begin{itemize}
        \item [(a)] $\widetilde{\ch}_z$ is a homomorphism of additive groups:
        $$
            \widetilde{\ch}_z(\cE_1\oplus\cE_2) = \widetilde{\ch}_z(\cE_1)+\widetilde{\ch}_z(\cE_2).
        $$
        \item [(b)] If $\cE$ is a $T$-equivariant line bundle on $\CP$, then
        $$
            \widetilde{\ch}_z(\cE) = \exp\left(-\frac{2\pi\sqrt{-1}(c_1)_T(\cE)}{z}\right).
        $$
    \end{itemize}
    For any $\cE\in K_T(\CP)$, we define the \emph{$K$-theoretic framing} of $\cE$ by
$$
    \kappa_z(\cE) := (-z)^{1-(c_1)_T(T\CP)/z}\Gamma\left(1-\frac{(c_1)_T(T\CP)}{z}\right)\widetilde{\ch}_z(\cE),
$$
where $(c_1)_T(T\CP)=2H$.
\end{definition}

\begin{definition}[equivariant SYZ T-dual]\label{SYZ-dual}
Let $\cE=\cO_{\bP^1}(l_1 p_1 + l_2 p_2)$ be an equivariant ample line bundle on $\bP^1$, where $l_1, l_2$
are integers such that $l_1+l_2>0$. We define the equivariant SYZ T-dual $\SYZ(\cE)$ of $\cE$ to be the
oriented path in $\bC$ (see Figure \ref{fig:SYZmirror}).
We extend the definition additively to the equivariant K-theory group $K_T(\bP^1)$.
\end{definition}
\begin{figure}[h]
\begin{center}
\setlength{\unitlength}{1.6mm}
\begin{picture}(60,20)

\put(10,6){\vector(1,0){10}}
\put(20,6){\line(1,0){10}}
\put(30,6){\vector(0,1){5}}
\put(30,11){\line(0,1){5}}
\put(30,16){\vector(1,0){10}}
\put(40,16){\line(1,0){10}}

\put(3,2){$-\infty+(-2l_1-1)\pi i$}
\put(27,2){$(-2l_1-1)\pi i$}
\put(27,18){$(2l_2-1)\pi i$}
\put(42,18){$+\infty+(2l_2-1)\pi i$}

\end{picture}
   \caption{SYZ $T$-dual of $\cE$ in $\bC$}
  \label{fig:SYZmirror}
\end{center}
\end{figure}

Suppose $g,m,n\geq 0$ and $2g-2+m+n>0$, for any $\cE_1,\dots,\cE_m\in K_T(\CP)$, we define the B-model open descendant potential as 
\begin{equation}\label{eqn:SYZ-B-potential}
    \begin{aligned}
        &W_{g,m,n}(t^0,q;\cE_1,\dots,\cE_m;X_1,\dots, X_n) 
        \\
        &:= \int_{y_1\in \SYZ(\cE_1)}\cdots \int_{y_m\in \SYZ(\cE_m)}e^{\sum_{i=1}^m x(y_i)/z_i}\fh_{X_1,\dots,X_n}(\omega_{g,m+n}),
    \end{aligned}
\end{equation}
where $\int_{y_i\in \SYZ(\cE_i)}$ acts on the $i$-th variables of $\omega_{g,m+n}$ and $\fh_{X_1,\dots,X_n}$ acts on the last $n$ variables. 

For a labeled graph $\vec{\Ga}\in{\bf\Ga}_{g,m+n}(\bP^1)$ with $L^o(\Ga) = \{l_1,\dots,l_{m+n}\}$. For the ordinary leaf $l_j\in L^o(\Ga)$ with $1\leq j\leq m$, we assign the SYZ weight (SYZ leaf factor)
$$
    (\check{\cL}^\SYZ)^{\beta(l_j)}_{k(l_j)}(l_j) = \frac{1}{\sqrt{-2}}\int_{y_j\in \SYZ(\cE_j)}e^{x(y_j)/z_j}d\xi_{\beta(l_j),k(l_j)}(Y_j),
$$
and for the ordinary leaf $l_{m+j}\in L^o(\Ga)$ with $1\leq j\leq n$, we assign the open leaf factor
$$
    (\check{\cL}^O)^{\beta(l_{m+j})}_{k(l_{m+j})}(l_{m+j}) = \frac{1}{\sqrt{-2}}\fh_{X_{j}}(d\xi_{\beta(l_{m+j}),k(l_{m+j})}(Y_{m+j})).
$$
We have the following B-model graph sum formula of $W_{g,m,n}$ with $2g-2+m+n>0$:
\begin{equation}\label{eqn:B-SYZ-open}
    W_{g,m,n}(t^0,q;\cE_1,\dots,\cE_m;X_1,\dots, X_n) = \sum_{\vec{\Ga}\in {\bf \Ga}_{g,m+n}(\bP^1)}\frac{{\sw}_B(\vec{\Ga})}{|\Aut(\vec{\Ga})|},
\end{equation}
where 
\begin{equation*}
    \begin{aligned}
        {\sw}_B(\vec{\Ga}) &= (-1)^{g(\vec{\Ga})-1+m+n}\prod_{v\in V(\Ga)}\Big(\frac{h^{\beta(v)}_1}{\sqrt{2}}\Big)^{2-2g(v)-\val(v)}
        \langle\prod_{h\in H(v)}\tau_{k(h)}\rangle_{g(v)}
        \\
        & \ \ \ \ \times \prod_{e\in E(\Ga)}\check{B}^{\beta(v_1(e)),\beta(v_2(e))}_{k(h_1(e)),k(h_2(e))}
        \prod_{l\in L^1(\Ga)}\frac{-1}{\sqrt{-2}}\check{h}^{\beta(l)}_{k(l)}
        \\
        & \ \ \ \ \times \ \ \prod_{j=1}^m 
        (\check{\cL}^\SYZ)^{\beta(l_j)}_{k(l_j)}(l_j)\prod_{j=1}^n(\check{\cL}^O)^{\beta(l_{m+j})}_{k(l_{m+j})}(l_{m+j}).
    \end{aligned}
\end{equation*}

\section{Mirror Symmetry}\label{sec:MS}
\subsection{Mirror symmetry of disk invariants}
Let $\Phi=xdy|_{C_q}$ which is a 1-form on $C_q$. Let 
\[
    \Phi_0 := \frac{\partial \Phi}{\partial t^0} = \frac{dY}{Y}.
\]
Consider 
\[
    \fh_X(\Phi_0) = \sum_{\mu\in\bZ_{\neq 0}} R_\mu X^\mu,
\]
where
\begin{equation*}
    \begin{aligned}
        R_\mu 
        &= \frac{1}{\mu}\Res_{Y=0}X^{-\mu}\frac{dY}{Y}
        \\
        &= \frac{1}{\mu}e^{\mu t^0/\sv}\sqrt{q}^\mu\Res_{Y=0}e^{\mu(Y+q/Y)/\sv}\frac{dY}{Y^{\mu+1}}
        \\
        &= \frac{1}{\mu}e^{\mu t^0/\sv}I_\mu\Big(\frac{2\sqrt{q}\mu}{\sv}\Big).
    \end{aligned}
\end{equation*}
Compare with Equation \eqref{eqn:explicit-disk}, we have
\[
    \frac{\partial}{\partial t^0}F_{0,1}(\bt; X)=\Big(\frac{1}{\sv}X\frac{d}{d X}\Big)F_{0,1}=\fh_X(\Phi_0).
\]
Therefore, we obtain the following theorem:
\begin{theorem}
    [Mirror symmetry of disk invariants]\label{thm:disk}
    \[
        F_{0,1}(\bt; X) = -W_{0,1}(t^0,q;X).
    \]
\end{theorem}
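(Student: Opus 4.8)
The plan is to compare the two disk potentials through their logarithmic $X$-derivatives, which reduces the theorem to an identity that has essentially already been recorded above the statement.

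First I would recall the computation carried out immediately before the theorem: combining the explicit formula \eqref{eqn:explicit-disk} for $F_{0,1}$ with the residue calculation of $\fh_X(\Phi_0)$, one has
\[
    \Big(\frac{1}{\sv}X\frac{d}{dX}\Big)F_{0,1}(\bt;X) = \fh_X(\Phi_0),
\]
where $\Phi_0 = \partial\Phi/\partial t^0 = dY/Y$. On both sides the coefficient of $X^d$ is the modified Bessel value $e^{dt^0/\sv}I_d(2\sqrt{q}d/\sv)$ (up to the explicit prefactor), so the matching is exactly the content of the residue computation producing the coefficients $R_\mu$.

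Next I would invoke the defining property of the B-model disk potential, namely that $W_{0,1}(t^0,q;X)$ is the Laurent series with vanishing constant term satisfying
\[
    \Big(-\frac{1}{\sv}X\frac{d}{dX}\Big)W_{0,1}(t^0,q;X) = \fh_X\Big(\frac{\partial\Phi}{\partial t^0}\Big) = \fh_X(\Phi_0).
\]
Adding the two displayed identities, the right-hand sides cancel and one obtains
\[
    \Big(\frac{1}{\sv}X\frac{d}{dX}\Big)\big(F_{0,1}(\bt;X) + W_{0,1}(t^0,q;X)\big) = 0.
\]

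Finally I would observe that the operator $X\frac{d}{dX}$ sends a Laurent series $\sum_{\mu}c_\mu X^\mu$ to $\sum_\mu \mu c_\mu X^\mu$, so its kernel consists precisely of the $X$-constants; hence $F_{0,1} + W_{0,1}$ is independent of $X$. Since $W_{0,1}$ is normalized to have vanishing constant term, and \eqref{eqn:explicit-disk} exhibits $F_{0,1}$ as a sum over $d\in\bZ_{\neq 0}$ and so also has vanishing constant term, the constant must be $0$, giving $F_{0,1}(\bt;X) = -W_{0,1}(t^0,q;X)$. I do not expect a serious obstacle in this argument: the only substantive input is the residue/Bessel identity already established above the statement, and the remaining step is the elementary fact that a Laurent series annihilated by $X\,d/dX$ and having zero constant term must vanish. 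The one point to verify with care is that both potentials are genuinely normalized to have no $X^0$ term, so that the integration constant is pinned down rather than left free.
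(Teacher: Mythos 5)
Your proposal is correct and follows essentially the same route as the paper: the paper computes $\fh_X(\Phi_0)$ by residues, matches it coefficient-by-coefficient against the explicit Bessel-function formula \eqref{eqn:explicit-disk} to get $\big(\tfrac{1}{\sv}X\tfrac{d}{dX}\big)F_{0,1}=\fh_X(\Phi_0)$, and then concludes via the defining property of $W_{0,1}$. The only difference is that you spell out the final integration-constant step (kernel of $X\,d/dX$ plus the vanishing of both constant terms), which the paper leaves implicit; this is a welcome clarification but not a different argument.
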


\subsection{Identification of open leaves}
We define 
\[
    U(z)(\bt,X) := \sum_{\alpha\in\{1,2\}} \tilde{\xi}^\alpha(z,X)S_z(1,\phi_\alpha)\Big|_{Q=1}.
\]
For $m\in\bZ_{\geq -2}$, 
\[
    \left(\frac{1}{\sv}X\frac{d}{dX}\right)[z^{m}](U(z)(\bt,X)) = [z^{m+1}](U(z)(\bt,X)).
\]

In Section \ref{sec:explicit-disk}, we have shown that
\[
    F_{0,1} = [z^{-2}]U(z)(\bt,X) = \sum_{d\in\bZ_{\neq0}}e^{dt^0/\sv}\frac{\sv}{d^2}I_d(2\sqrt{q}d/\sv)X^d,
\]
\[
    [z^0]U(z)(\bt,X) = \left(\frac{1}{\sv}X\frac{d}{dX}\right)^2F_{0,1}= \frac{1}{\sv}\sum_{d\in\bZ_{\neq0}}e^{dt^0/\sv}I_d(2\sqrt{q}d/\sv)X^d.
\]
By quantum differential equation, 
\[
    z\frac{\partial U}{\partial t^1} = \sum_{\alpha\in\{1,2\}}\tilde{\xi}^\alpha(z,X)S_z(H,\phi_\alpha)\Big|_{Q=1},
\]
and 
\begin{equation*}
    \begin{aligned}
            [z^{-1}]\frac{\partial U}{\partial t^1}
            &= \frac{\partial}{\partial t^1}\left(\frac{1}{\sv}X\frac{d}{dX}\right)[z^{-2}](U(z)(\bt,X))
            \\
            &= \frac{\partial}{\partial t^1}\sum_{d\in\bZ_{\neq 0}}e^{dt^0/\sv}\frac{1}{d}I_d(2\sqrt{q}d/\sv)X^d 
            \\
            &= \sum_{d\in\bZ_{\neq 0}}e^{dt^0/\sv}\frac{\sqrt{q}}{\sv}I'_d(2\sqrt{q}d/\sv)X^d.
    \end{aligned}
\end{equation*}
Comparing with Equation \eqref{eqn:eta} and Equation \eqref{eqn:chi}, we have
\begin{equation*}
    \begin{aligned}
        \fh_X(\eta) &= -\sqrt{-2\Delta^2(q)}[z^0]U(z)(\bt,X),
        \\
        \fh_X(\chi) &= -2\sqrt{\frac{-2}{\Delta^2(q)}}[z^0]\tilde{\xi}^\alpha(z,X)S_z(H,\phi_\alpha)\Big|_{Q=1}.
    \end{aligned}
\end{equation*}
Recall that
\[
    \hat{\phi}_\alpha(q) = (\Psi^{-1})_\alpha^{\ 0} 1 + (\Psi^{-1})_\alpha^{\ 1} H,
\]
\[
    \sum_{\beta\in\{1,2\}}\tilde{\xi}^\beta(z,X)S(\hat{\phi}_\alpha(q),\phi_\beta)\Big|_{Q=1}
    = (\Psi^{-1})_\alpha^{\ 0} U + z(\Psi^{-1})_\alpha^{\ 1}\frac{\partial U}{\partial t^1}.
\]
We obtain
\begin{equation*}
    \begin{aligned}
        [z^0]\sum_{\beta\in\{1,2\}}\tilde{\xi}^\beta(z,X)S(\hat{\phi}_1(q),\phi_\beta)\Big|_{Q=1} &=
        [z^0]\frac{\sqrt{-\Delta^2(q)}}{2}U + [z^{-1}]\frac{1}{\sqrt{-\Delta^2(q)}}\frac{\partial U}{\partial t^1}
    = -\frac{\fh_X(\xi_{1,0})}{\sqrt{-2}},    
    \\
       [z^0]\sum_{\beta\in\{1,2\}}\tilde{\xi}^\beta(z,X)S(\hat{\phi}_2(q),\phi_\beta)\Big|_{Q=1} &=
        [z^0]\frac{\sqrt{\Delta^2(q)}}{2}U + [z^{-1}]\frac{1}{\sqrt{\Delta^2(q)}}\frac{\partial U}{\partial t^1}
        = -\frac{\fh_X(\xi_{2,0})}{\sqrt{-2}}.
    \end{aligned}
\end{equation*}

Therefore, for $\alpha\in\{1,2\}$, $k> 0$,
\begin{equation}\label{eqn:A-B-open}
    \begin{aligned}
        [z^k]\sum_{\beta\in\{1,2\}}\tilde{\xi}^\beta(z,X)S(\hat{\phi}_\alpha(q),\phi_\beta)\Big|_{Q=1} &= \fh_X\Big(
        (-\frac{d}{dx})^k\Big(\frac{-1}{\sqrt{-2}}\xi_{\alpha,0}\Big)\Big),
        \\
        \Big(\sum_{\beta\in\{1,2\}}\tilde{\xi}^\beta(z,X)S(\hat{\phi}_\alpha(q),\phi_\beta)\Big|_{Q=1}\Big)_+ &= \fh_X\Big(\sum_{k\geq 0} 
        \frac{-1}{\sqrt{-2}}W^\alpha_k z^k\Big) 
        = -\fh_X\Big(\frac{\hat{\theta}_\alpha(z)}{\sqrt{-2}}\Big).
    \end{aligned}
\end{equation}

\begin{lma}\label{lma:theta-R-matrix}
    We have
    \[
        \theta_\alpha(z) = \sum_{\beta\in\{1,2\}}\check{R}_\beta^{\ \alpha}(-z)\hat{\theta}_\beta(z).
    \]
\end{lma}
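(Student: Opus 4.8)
The plan is to prove the identity by passing to Laplace transforms along the Lefschetz thimbles $\gamma_1,\gamma_2$ and invoking the very asymptotics that define the B-model $R$-matrix. For a meromorphic $1$-form $\omega$ on $C_q$ with poles confined to the branch points, set
\[
    \cL_\delta[\omega](w) := \frac{\sqrt{w}\,e^{-\check{u}^\delta/w}}{2\sqrt{\pi}}\int_{\gamma_\delta}e^{x/w}\,\omega,\qquad \delta\in\{1,2\},
\]
understood as an asymptotic series in $w$, so that the defining property of the $R$-matrix reads $\cL_\delta[d\xi_{\beta,0}](w)\sim\check{R}_\beta^{\ \delta}(w)$. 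Both sides of the claimed identity are generating series in $z$ of meromorphic $1$-forms on $\bP^1$ whose only poles sit at $P_1,P_2$ and which have vanishing residues; such a form is determined by its principal parts at $P_1$ and $P_2$, and the pair $(\cL_1,\cL_2)$ jointly recovers these principal parts by steepest descent at the two saddles. Hence it suffices to prove that $\cL_\delta[\theta_\alpha(z)](w)=\cL_\delta\big[\sum_\beta\check{R}_\beta^{\ \alpha}(-z)\hat{\theta}_\beta(z)\big](w)$ for $\delta=1,2$, and then conclude by this injectivity.

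The transform of the right-hand side is the easy half. Writing $W^\beta_k=dg_k$ with $g_k=(-\tfrac{d}{dx})^k\xi_{\beta,0}$, the relation $W^\beta_{k-1}=-g_k\,dx$ together with integration by parts on the thimble — the boundary terms vanishing because $e^{x/w}$ decays at the ends of $\gamma_\delta$ — gives $\int_{\gamma_\delta}e^{x/w}W^\beta_k=w^{-1}\int_{\gamma_\delta}e^{x/w}W^\beta_{k-1}$, hence $\cL_\delta[W^\beta_k](w)=w^{-k}\check{R}_\beta^{\ \delta}(w)$. Summing the geometric series in $z$ yields
\[
    \cL_\delta\Big[\sum_\beta\check{R}_\beta^{\ \alpha}(-z)\hat{\theta}_\beta(z)\Big](w)=\frac{1}{1-z/w}\sum_{\beta\in\{1,2\}}\check{R}_\beta^{\ \alpha}(-z)\,\check{R}_\beta^{\ \delta}(w),
\]
as a power series in $z$ whose coefficients are asymptotic series in $w$ (so the geometric resummation is purely formal and raises no convergence issue).

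The crux is therefore to show that the transform of the left-hand side equals the same expression. Here I would use that $\theta_\alpha(z)=\sum_k z^k\,d\xi_{\alpha,k}$ is built from the Bergman kernel $B$: the asymptotic expansion of $\cL_\delta[d\xi_{\alpha,k}]$ at the saddle $P_\delta$ is controlled by the local behaviour of $d\xi_{\alpha,k}$ there, namely by the order-$(2k+2)$ diagonal principal part when $\delta=\alpha$ and by the off-diagonal Taylor coefficients $B^{\alpha,\delta}_{k,l}$ of $B$ around $(P_\alpha,P_\delta)$ otherwise. Evaluating the Gaussian integral of the principal part produces the diagonal pieces $w^{-k}\delta_{\alpha\delta}$, whose $z$-sum is $\delta_{\alpha\delta}/(1-z/w)$; the analytic parts contribute the corrections recorded by the $\check{B}^{\alpha,\delta}_{k,l}$. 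Substituting the defining relation $\check{B}^{\alpha,\delta}_{k,l}=[z^kw^l]\tfrac{1}{z+w}\big(\delta_{\alpha\delta}-\sum_\gamma\check{R}_\gamma^{\ \alpha}(-z)\check{R}_\gamma^{\ \delta}(-w)\big)$ and applying the unitarity $R^T(-z)R(z)=\one$ to replace the argument $-w$ by $w$ then reorganizes everything into $\tfrac{1}{1-z/w}\sum_\beta\check{R}_\beta^{\ \alpha}(-z)\check{R}_\beta^{\ \delta}(w)$. A leading-order check confirms the normalization: at $z=0$ both transforms reduce to $\cL_\delta[d\xi_{\alpha,0}](w)=\check{R}_\alpha^{\ \delta}(w)$ since $\check{R}(0)=\one$.

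I expect this last reorganization to be the main obstacle. The delicate point is the bookkeeping that converts the kernel factor $\tfrac{1}{z+w}$ appearing in the $\check{B}$-relation into the factor $\tfrac{1}{1-z/w}$ forced by integration by parts, while correctly absorbing the sign reversal $z\mapsto -z$ in the $\check{R}(-z)$ slot and turning $\check{R}(-w)$ into $\check{R}(w)$ through unitarity; this is precisely the step where the two a priori different pole structures of $\theta_\alpha(z)$ (poles only at $P_\alpha$) and of $\sum_\beta\check{R}_\beta^{\ \alpha}(-z)\hat{\theta}_\beta(z)$ (poles at both $P_1$ and $P_2$) are reconciled. Once the two transforms are shown to agree for $\delta=1,2$, the injectivity of $(\cL_1,\cL_2)$ on $1$-forms with poles confined to the branch points completes the proof.
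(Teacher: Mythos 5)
Your proposal is an outline of the right general strategy (it is essentially the Laplace-transform method by which such basis-change identities are established in the Eynard--Orantin literature), and the easy half is done correctly: integration by parts on the thimble does give $\cL_\delta[W^\beta_k](w)=w^{-k}\check{R}_\beta^{\ \delta}(w)$, hence the stated transform of the right-hand side. But the proof has two genuine gaps, and the one you yourself flag as "the main obstacle" is not mere bookkeeping --- it is the entire content of the lemma. You never actually compute $\cL_\delta[\theta_\alpha(z)](w)$ and show it equals $\frac{1}{1-z/w}\sum_\beta\check{R}_\beta^{\ \alpha}(-z)\check{R}_\beta^{\ \delta}(w)$. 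Worse, the naive version of your sketch (Gaussian integration of the principal part, plus substitution of the $\check{B}$ relation for the analytic part) does not close up: run it at $z=0$, where the lemma reduces to $d\xi_{\alpha,0}=W^\alpha_0$. Term-by-term steepest descent of the local expansion of $d\xi_{\alpha,0}$ at $P_\delta$ gives $\delta_{\alpha\delta}$ plus $\sum_l \check{B}^{\delta,\alpha}_{l,0}w^{l+1}=\delta_{\alpha\delta}-\check{R}_\alpha^{\ \delta}(-w)$ (up to sign conventions), i.e.\ one lands on $\check{R}_\alpha^{\ \delta}(-w)$, whereas the defining asymptotics require $\check{R}_\alpha^{\ \delta}(w)$. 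Unitarity does not repair this: $R^T(-z)R(z)=\one$ identifies $R(w)^{-1}$ with $R(-w)^T$, it does not convert the entry $\check{R}_\alpha^{\ \delta}(-w)$ into $\check{R}_\alpha^{\ \delta}(w)$ (the even part of $R$ is nonzero in general). The discrepancy is resolved only by a careful treatment of the regularized Laplace transform through the pole at $P_\delta=P_\alpha$ (orientation of the thimble, the $\sqrt{-1}$ conventions in the definition of $d\xi_{\alpha,k}$, half-residue/principal-value contributions), and that careful treatment is precisely the missing proof.

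The second gap is the reduction step. You claim that a meromorphic $1$-form on $\bP^1$ with poles only at $P_1,P_2$ and vanishing residues is recovered from the pair of asymptotic series $(\cL_1,\cL_2)$ "by steepest descent." Determination by principal parts is indeed true on $\bP^1$, but the asymptotic expansion of $\cL_\delta$ does not recover the principal part at $P_\delta$: the Gaussian integral annihilates the entire odd part of the local expansion in $\zeta_\delta$ and mixes the surviving even singular coefficients with the even analytic ones. So injectivity of $(\cL_1,\cL_2)$ on your space of forms is a nontrivial claim requiring its own argument, which you do not give. For comparison, the paper does not prove the lemma by this route at all: it quotes the coefficient identity $d\xi_{\alpha,k}=\sum_{i=0}^{k}\sum_{\beta}([z^{k-i}]\check{R}_\beta^{\ \alpha}(-z))W_i^\beta$ from the proof of Theorem A in \cite{FLZ17}, of which the lemma is just the generating-series repackaging; a complete write-up along your lines would amount to reproving that cited result, with the two steps above supplied in full.
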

\begin{proof}
    The lemma follows from
    \[
        d\xi_{\alpha,k} = \sum_{i=0}^{k}\sum_{\beta\in\{1,2\}}([z^{k-i}]\check{R}_\beta^{\ \alpha}(-z))W_i^\beta,
    \]
    which is shown in the proof of \cite[Theorem A]{FLZ17}.
\end{proof}
Then we have the identification of open leaves:
\begin{theorem}\label{thm:open-leaves}
    For each ordinary leaf $l_j$ with $\beta(l_j)=\beta\in\{1,2\}$ and $k(l_j)=k\in\bZ_{\geq 0}$, we have
    \[
        (\tilde{\cL}^O)^\beta_k(l_j)\big|_{Q=1} = -(\check{\cL}^O)^\beta_k(l_j).
    \]
\end{theorem}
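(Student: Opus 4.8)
The plan is to reduce the claimed identity to two building blocks already assembled in the excerpt: the comparison \eqref{eqn:A-B-open}, which expresses the truncated product $\bigl(\sum_\alpha \tilde{\xi}^\alpha(z,X)\,S_z(\hat\phi_\gamma(q),\phi_\alpha)\bigr)_+$ in terms of $\fh_X(\hat\theta_\gamma(z))$, and Lemma \ref{lma:theta-R-matrix}, which writes the B-model forms $\theta_\beta(z)$ as an $\check R$-weighted combination of the $\hat\theta_\gamma(z)$. The only genuine input beyond these is the identification $R(z)=\check R(z)$ of the quantum $R$-matrix with the topological-recursion $R$-matrix, which is the content of the closed mirror symmetry of \cite{FLZ17,DOSS}; granting it, the proof becomes a formal manipulation of coefficient extraction together with the linearity of $\fh_X$.

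First I would rewrite the A-model open leaf \eqref{eqn:A-model-open-leaf}, specialized at $Q=1$, by carrying out the inner sum over the leaf slot. Here one must match index conventions carefully: in \eqref{eqn:A-B-open} the summation index is the leaf slot and $\alpha$ is the fixed canonical label, whereas in the definition of $(\tilde{\cL}^O)^\beta_k(l_j)$ the summation runs over $\alpha$ with $\gamma$ the fixed canonical label. After relabelling, \eqref{eqn:A-B-open} reads, for each fixed $\gamma$,
\[
\Bigl(\sum_{\alpha\in\{1,2\}}\tilde{\xi}^\alpha(z,X)\,S_z(\hat\phi_\gamma(q),\phi_\alpha)\big|_{Q=1}\Bigr)_+ = -\frac{1}{\sqrt{-2}}\,\fh_X(\hat\theta_\gamma(z)).
\]
Substituting this into the definition of the open leaf turns it into
\[
(\tilde{\cL}^O)^\beta_k(l_j)\big|_{Q=1} = -\frac{1}{\sqrt{-2}}\,[z^k]\sum_{\gamma\in\{1,2\}}\fh_X(\hat\theta_\gamma(z))\,R(-z)_\gamma^{\ \beta}.
\]
Note that the truncation $(\cdot)_+$ is absorbed at this step, since $\hat\theta_\gamma(z)=\sum_{k\ge 0}W^\gamma_k z^k$ is already a nonnegative power series in $z$ while $R(-z)_\gamma^{\ \beta}$ starts at $z^0$, so the coefficient $[z^k]$ of the product is a finite convolution.

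Next I would commute $\fh_X$ with the $z$-linear operations. Since $\fh_X$ acts only on the $Y$-dependence and is linear and $z$-independent, I can pull it outside the sum and the coefficient extraction, obtaining
\[
(\tilde{\cL}^O)^\beta_k(l_j)\big|_{Q=1} = -\frac{1}{\sqrt{-2}}\,[z^k]\,\fh_X\Bigl(\sum_{\gamma\in\{1,2\}}R(-z)_\gamma^{\ \beta}\,\hat\theta_\gamma(z)\Bigr).
\]
Using $R=\check R$ together with Lemma \ref{lma:theta-R-matrix} in the form $\theta_\beta(z)=\sum_\gamma \check R_\gamma^{\ \beta}(-z)\hat\theta_\gamma(z)$, the argument of $\fh_X$ is exactly $\theta_\beta(z)=\sum_{k\ge0}d\xi_{\beta,k}\,z^k$. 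Extracting $[z^k]$ and pulling $\fh_X$ back through the ($z$-free) coefficient then gives $-\tfrac{1}{\sqrt{-2}}\fh_X(d\xi_{\beta,k})$, which by the definition of the B-model open leaf equals $-(\check{\cL}^O)^\beta_k(l_j)$, as required.

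The computation is essentially formal once the ingredients are in place, so the main obstacle is conceptual rather than technical: it is the identification $R=\check R$ of the two $R$-matrices, i.e. the closed-string mirror symmetry of $\bP^1$, which is precisely what makes the A-model leaf (built from the quantum $R$-matrix) comparable to the B-model form $\theta_\beta$ (built from the topological-recursion $R$-matrix). A secondary point requiring care is the bookkeeping of the $(\cdot)_+$ truncation and the index conventions in \eqref{eqn:A-B-open}, which I would check explicitly to ensure that no nonnegative-power contributions are lost or double-counted when passing from the truncated product to the full series $\theta_\beta(z)$.
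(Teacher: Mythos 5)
Your proposal is correct and follows essentially the same route as the paper's own proof: the paper likewise combines the definition of the A-model open leaf \eqref{eqn:A-model-open-leaf}, the identification \eqref{eqn:A-B-open}, Lemma \ref{lma:theta-R-matrix}, and the equality $R_\beta^{\ \alpha}(z)\big|_{Q=1}=\check{R}_\beta^{\ \alpha}(z)$ from \cite[Proposition 3.10]{FLZ17}. Your write-up merely makes explicit the coefficient-extraction and index bookkeeping that the paper leaves implicit.
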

\begin{proof}
    By \cite[Proposition 3.10]{FLZ17}, the A- and B-model $R$-matrices are equal:
    $$R_\beta^{\ \alpha}(z)\big|_{Q=1}=\check{R}_\beta^{\ \alpha}(z).$$
    Then Theorem \ref{thm:open-leaves} follows from Equation \eqref{eqn:A-model-open-leaf}, Equation \eqref{eqn:A-B-open}, and Lemma \ref{lma:theta-R-matrix}.
\end{proof}

\subsection{Mirror symmetry of annulus invariants}
Define 
\begin{equation*}
    \begin{aligned}
        C(Y_1,Y_2) &:= (-\frac{\partial}{\partial x(Y_1)}-\frac{\partial}{\partial x(Y_2)})
        \Big(\frac{\omega_{0,2}}{dx(Y_1)dx(Y_2)}\Big)(Y_1,Y_2)dx(Y_1)dx(Y_2)
        \\
        & \ = \Big(-d_1\circ\frac{1}{dx(Y_1)}-d_2\circ\frac{1}{dx(Y_2)}\Big)(\tilde{\omega}_{0,2}(Y_1,Y_2)).
    \end{aligned}
\end{equation*}

The following proposition is proved in \cite[Lemma 6.9]{FLZ20b}.
\begin{prop} We have
\[
    C(Y_1,Y_2) = \frac{1}{2}\sum_{\alpha\in\{1,2\}}d\xi_{\alpha,0}(Y_1)d\xi_{\alpha,0}(Y_2).
\]
\end{prop}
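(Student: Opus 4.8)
The plan is to reduce the identity to an elementary computation with rational $1$-forms on the $Y$-sphere, using the explicit parametrization of $C_q$ together with $dx = \frac{(Y-P_1)(Y-P_2)}{Y^2}\,dY$ and $\frac{dX}{X} = -\frac{dx}{\sv}$.

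First I would use the linearity of the operator
\[
    \cD := -d_1\circ\frac{1}{dx(Y_1)} - d_2\circ\frac{1}{dx(Y_2)}
\]
to split $C = \cD(\tilde{\omega}_{0,2})$ into the contribution of $\omega_{0,2}=B$ and that of the $X$-Bergman kernel $\frac{dX_1dX_2}{(X_1-X_2)^2}$. The key preliminary claim is that $\cD$ annihilates the latter. Indeed, from $\frac{dX}{dx}=-\frac{X}{\sv}$ the contraction $\frac{1}{dx(Y_1)}$ sends $\frac{dX_1dX_2}{(X_1-X_2)^2}$ to $-\frac{X_1}{\sv(X_1-X_2)^2}\,dX_2$, and applying $-d_1$ gives $-\frac{X_1+X_2}{\sv(X_1-X_2)^3}\,dX_1dX_2$; the symmetric $Y_2$-term gives exactly the opposite, so the two cancel. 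Hence $C=\cD(\omega_{0,2})$, and it remains to evaluate $\cD$ on the standard Bergman kernel.

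Next I would compute $\cD(\omega_{0,2})$ directly. Writing $g(Y):=\frac{1}{x'(Y)}=\frac{Y^2}{(Y-P_1)(Y-P_2)}$ and expanding in partial fractions,
\[
    g(Y)=1+\sum_{\alpha\in\{1,2\}}\frac{r_\alpha}{Y-P_\alpha},\qquad r_\alpha=\frac{P_\alpha^2}{P_\alpha-P_{\alpha'}},
\]
where $\alpha'$ is the other index, one has $\frac{1}{dx(Y_1)}\omega_{0,2}=\frac{g(Y_1)}{(Y_1-Y_2)^2}\,dY_2$, so that
\[
    \cD(\omega_{0,2}) = -\Big[\partial_{Y_1}\Big(\tfrac{g(Y_1)}{(Y_1-Y_2)^2}\Big)+\partial_{Y_2}\Big(\tfrac{g(Y_2)}{(Y_1-Y_2)^2}\Big)\Big]\,dY_1dY_2.
\]
Two things must be checked. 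A Taylor expansion about $Y_2=Y_1$ shows that the singular terms along the diagonal cancel, so the bracket is regular at $Y_1=Y_2$; and since the constant term $1$ of $g$ contributes nothing, only the simple-pole parts $\frac{r_\alpha}{Y-P_\alpha}$ survive, producing double poles at $Y_i=P_\alpha$. A short per-$\alpha$ computation then yields $\cD(\omega_{0,2})$ as a sum of terms proportional to $\frac{1}{(Y_1-P_\alpha)^2(Y_2-P_\alpha)^2}\,dY_1dY_2$.

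Finally I would match this with the right-hand side. Using $P_1+P_2=\sv$, $P_1P_2=-q$ and $P_2-P_1=\Delta^2(q)=-\Delta^1(q)$, the coefficients $r_\alpha$ rewrite as $P_\alpha^2/\Delta^\alpha(q)$, which I would compare term by term with
\[
    \tfrac{1}{2}\,d\xi_{\alpha,0}(Y_1)\,d\xi_{\alpha,0}(Y_2)=\frac{-P_\alpha^2}{\Delta^\alpha(q)\,(Y_1-P_\alpha)^2(Y_2-P_\alpha)^2}\,dY_1dY_2,
\]
read off from $d\xi_{\alpha,0}=\sqrt{-2/\Delta^\alpha(q)}\,d\big(\tfrac{P_\alpha}{Y-P_\alpha}\big)$. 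I expect the main obstacle to be purely bookkeeping: correctly tracking the signs and normalization constants (the factor $\sqrt{-2/\Delta^\alpha(q)}$, the sign of $dX/dx$, and the orientation in the contraction-then-$d$ operations) and verifying the diagonal cancellation, which is where the conventions of \cite{FLZ20b} must be matched. Once these constants are pinned down, the proposition follows by comparing principal parts at the branch points, both sides being meromorphic bidifferentials on $\bP^1\times\bP^1$ with double poles only along $\{Y_i=P_\alpha\}$ and no other singularities.
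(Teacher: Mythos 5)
Your strategy is sound, and it is worth noting that the paper gives no argument of its own for this proposition: it is quoted from \cite[Lemma 6.9]{FLZ20b}. A self-contained verification of the kind you propose is therefore a genuinely different route, and most of your skeleton is correct. The operator $\cD$ does annihilate $\frac{dX_1dX_2}{(X_1-X_2)^2}$ (your cancellation is right, and this is exactly why the paper's two displayed expressions for $C$ agree); the reduction to $g(Y)=1/x'(Y)=\frac{Y^2}{(Y-P_1)(Y-P_2)}$ with partial fractions $g=1+\sum_\alpha\frac{r_\alpha}{Y-P_\alpha}$, $r_\alpha=\frac{P_\alpha^2}{\Delta^\alpha(q)}$, is right; and the diagonal singularities do cancel. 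Completing your per-$\alpha$ step gives, for $g_\alpha(Y)=\frac{r_\alpha}{Y-P_\alpha}$,
\[
-\Big[\partial_{Y_1}\Big(\tfrac{g_\alpha(Y_1)}{(Y_1-Y_2)^2}\Big)+\partial_{Y_2}\Big(\tfrac{g_\alpha(Y_2)}{(Y_1-Y_2)^2}\Big)\Big]
= +\frac{r_\alpha}{(Y_1-P_\alpha)^2(Y_2-P_\alpha)^2},
\qquad\text{so}\qquad
C=\sum_{\alpha\in\{1,2\}}\frac{P_\alpha^2}{\Delta^\alpha(q)}\,\frac{dY_1dY_2}{(Y_1-P_\alpha)^2(Y_2-P_\alpha)^2}.
\]

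The gap is precisely in the step you dismissed as bookkeeping, and it is not bookkeeping. Against the right-hand side as you compute it from the paper's displayed closed formula $d\xi_{\alpha,0}=\sqrt{-2/\Delta^\alpha(q)}\,d\big(\tfrac{P_\alpha}{Y-P_\alpha}\big)$, namely $\tfrac12 d\xi_{\alpha,0}(Y_1)d\xi_{\alpha,0}(Y_2)=\frac{-P_\alpha^2}{\Delta^\alpha(q)(Y_1-P_\alpha)^2(Y_2-P_\alpha)^2}dY_1dY_2$, your two sides differ by an overall sign: as set up, your argument establishes $C=-\frac12\sum_\alpha d\xi_{\alpha,0}(Y_1)d\xi_{\alpha,0}(Y_2)$, i.e.\ it contradicts the proposition rather than proving it. The resolution is that the displayed closed formula for $d\xi_{\alpha,0}$ is itself inconsistent, by a factor of $\sqrt{-1}$, with the actual residue definition $d\xi_{\alpha,0}(p)=\Res_{p'\rightarrow P_\alpha}B(p,p')\big(\sqrt{-1}\zeta_\alpha(p')\big)^{-1}$: writing $\zeta_\alpha=c_\alpha(Y-P_\alpha)+O\big((Y-P_\alpha)^2\big)$ with $c_\alpha^2=-\tfrac12 x''(P_\alpha)=-\frac{\Delta^\alpha(q)}{2P_\alpha^2}$ (from $x=\check{u}^\alpha-\zeta_\alpha^2$ and $x''(P_\alpha)=\frac{P_\alpha-P_{\alpha'}}{P_\alpha^2}$), the residue gives
\[
d\xi_{\alpha,0}=\frac{1}{\sqrt{-1}\,c_\alpha}\,\frac{dY}{(Y-P_\alpha)^2},
\qquad
d\xi_{\alpha,0}(Y_1)\,d\xi_{\alpha,0}(Y_2)=\frac{2P_\alpha^2}{\Delta^\alpha(q)}\,\frac{dY_1dY_2}{(Y_1-P_\alpha)^2(Y_2-P_\alpha)^2},
\]
the opposite sign of what the closed formula yields. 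With the residue-defined $d\xi_{\alpha,0}$, the half-sum over $\alpha$ matches your computation of $C$ exactly and the proposition follows. So to make your proof close you must derive $d\xi_{\alpha,0}$ (at least its square) from the residue definition rather than quote the in-text formula; since the B-model graph sum and Theorem \ref{thm:annulus} use the residue-defined forms, that is the object the proposition is actually about.
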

\begin{theorem}
    [Mirror symmetry of annulus invariants]\label{thm:annulus}
    \[
        F_{0,2}(\bt;X_1,X_2) = -W_{0,2}(t^0,q;X_1,X_2).
\]
\end{theorem}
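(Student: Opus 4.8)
The plan is to apply the Euler (dilation) operator $E := \frac{1}{\sv}\left(X_1\frac{\partial}{\partial X_1} + X_2\frac{\partial}{\partial X_2}\right)$ to both sides and to show $E(F_{0,2}+W_{0,2}) = 0$, reducing the statement to the equality of a single common expression computed on the A- and B-models.

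On the A-model side I would start from the last display of Proposition \ref{prop:open-generating-function}(2),
\[
E\, F_{0,2}(\bt;X_1,X_2) = [z_1^0 z_2^0](z_1+z_2)\sum_{\alpha_1,\alpha_2\in\{1,2\}} V_{z_1,z_2}(\phi_{\alpha_1},\phi_{\alpha_2})\,\tilde{\xi}^{\alpha_1}(z_1,X_1)\,\tilde{\xi}^{\alpha_2}(z_2,X_2),
\]
and use the factorization $V_{z_1,z_2}(a,b) = \frac{1}{z_1+z_2}\sum_{\alpha} S_{z_1}(\hat{\phi}_\alpha(q),a)S_{z_2}(\hat{\phi}_\alpha(q),b)$ to cancel the $(z_1+z_2)$ prefactor and split the double sum into a product. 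Writing $U_\alpha(z,X) := \sum_{\beta} \tilde{\xi}^\beta(z,X) S_z(\hat{\phi}_\alpha(q),\phi_\beta)\big|_{Q=1}$, this gives $E\,F_{0,2} = \sum_\alpha [z^0]U_\alpha(z,X_1)\cdot[z^0]U_\alpha(z,X_2)$. The $[z^0]$-identifications established in the subsection on open leaves, namely $[z^0]U_\alpha(z,X) = -\frac{1}{\sqrt{-2}}\fh_X(\xi_{\alpha,0})$, then yield
\[
E\,F_{0,2}(\bt;X_1,X_2) = -\frac{1}{2}\sum_{\alpha\in\{1,2\}}\fh_{X_1}(\xi_{\alpha,0})\,\fh_{X_2}(\xi_{\alpha,0}).
\]

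On the B-model side the key step is a commutation lemma: for a bidifferential $\theta$ holomorphic on $(D_\ep)^{\times 2}$,
\[
\frac{1}{\sv}X_i\frac{\partial}{\partial X_i}\,\fh_{X_1,X_2}(\theta) = \fh_{X_1,X_2}\!\left(-d_i\circ\frac{1}{dx(Y_i)}\,\theta\right),
\]
which I would prove by a residue integration by parts together with $\frac{dX}{X} = -\frac{dx}{\sv}$. Applying this to $\theta=\tilde{\omega}_{0,2}$ and summing over $i=1,2$ gives $E\,W_{0,2} = \fh_{X_1,X_2}(C)$ with $C$ as defined immediately above. Invoking the preceding Proposition $C = \frac{1}{2}\sum_\alpha d\xi_{\alpha,0}(Y_1)d\xi_{\alpha,0}(Y_2)$, the separability of $\fh_{X_1,X_2}$ on a product in the two variables, and $\fh_X(d\xi_{\alpha,0}) = \fh_X(\xi_{\alpha,0})$ (integration by parts, using $d\xi_{\alpha,0}=d(\xi_{\alpha,0})$), I obtain $E\,W_{0,2} = +\frac{1}{2}\sum_\alpha \fh_{X_1}(\xi_{\alpha,0})\fh_{X_2}(\xi_{\alpha,0})$. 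Hence $E(F_{0,2}+W_{0,2}) = 0$.

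The main obstacle is the descent from $E(F_{0,2}+W_{0,2})=0$ to $F_{0,2} = -W_{0,2}$. Both potentials are Laurent series supported on monomials $X_1^{\mu_1}X_2^{\mu_2}$ with $\mu_1,\mu_2\neq 0$, and $E$ annihilates exactly the \emph{balanced} part $\mu_1+\mu_2=0$ (total winding zero); thus $E$ is not injective and the balanced terms require a separate comparison. To control them I would rerun the argument with the single-variable operator $\frac{1}{\sv}X_1\frac{\partial}{\partial X_1}$, which \emph{is} injective on series having no $X_1^0$-term. On the A-side one differentiates the unsymmetrized first formula of Proposition \ref{prop:open-generating-function}(2): the raising property $\frac{1}{\sv}X\frac{d}{dX}\tilde{\xi}^\alpha_k = \tilde{\xi}^\alpha_{k+1}$ turns the $[z_1^{-1}]$-extraction into a $[z_1^0]$-extraction, the boundary term $z_1^{-2}\tilde{\xi}^\alpha_{-2}$ dropping because the two-point correlator carries only negative powers of $z_1$. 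On the B-side one uses the individual piece $-d_1\circ\frac{1}{dx(Y_1)}\tilde{\omega}_{0,2}$ in place of the symmetric combination $C$. Producing this one-sided B-model piece in closed form from $\tilde{\omega}_{0,2} = \omega_{0,2} - \frac{dX_1dX_2}{(X_1-X_2)^2}$ and checking that the two one-sided computations again cancel is the delicate point; once it is in place, injectivity of $X_1\frac{\partial}{\partial X_1}$ forces $F_{0,2} = -W_{0,2}$.
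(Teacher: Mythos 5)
Your central computation coincides with the paper's own proof, step for step: the paper likewise applies the operator $E=\frac{1}{\sv}\bigl(X_1\frac{\partial}{\partial X_1}+X_2\frac{\partial}{\partial X_2}\bigr)$ and evaluates $\fh_{X_1,X_2}(C)$ in two ways --- once via the quoted identity $C=\frac{1}{2}\sum_{\alpha}d\xi_{\alpha,0}(Y_1)d\xi_{\alpha,0}(Y_2)$ from \cite{FLZ20b}, the identification $[z^0]\sum_\beta\tilde{\xi}^\beta(z,X)S(\hat{\phi}_\alpha(q),\phi_\beta)\big|_{Q=1}=-\fh_X(\xi_{\alpha,0})/\sqrt{-2}$, the factorization of $V_{z_1,z_2}$, and Proposition \ref{prop:open-generating-function}(2), giving $-E\,F_{0,2}$; and once via integration by parts against $\frac{dX}{X}=-\frac{dx}{\sv}$, giving $E\,W_{0,2}$. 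Your signs and intermediate identities all match the paper's.

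Where you part ways with the paper is the descent step, and here your proposal contains genuinely useful content but also its own unfilled hole. The paper concludes with the single sentence that the constant terms of $F_{0,2}$ and $W_{0,2}$ vanish; you observe, correctly, that this does not suffice as stated, since $\ker E$ consists of all series in the balanced monomials $X_1^{\mu}X_2^{-\mu}$, and such terms genuinely occur for $(\CP,\RP)$ (windings $(\mu,-\mu)$ are realized geometrically), in contrast with the Aganagic--Vafa branes underlying the remodeling-conjecture arguments of \cite{FLZ20b}, where all windings have one sign and the Euler-operator argument closes by itself. No symmetric substitute helps either: $\partial_{t^0}$ acts on both potentials exactly as $E$ does, so the balanced part is invisible to this whole class of operators. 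However, the repair you sketch stops precisely where the difficulty sits, and the obstacles are concrete. First, the identity for $C$ is intrinsically symmetric; neither this paper nor \cite{FLZ20b} supplies a closed form for the one-sided piece $-d_1\circ\frac{1}{dx(Y_1)}\tilde{\omega}_{0,2}$, so the B-model side of your one-variable identity has no available evaluation. Second, on the A-model side, Proposition \ref{prop:open-generating-function}(2) determines only $E\,F_{0,2}(0;X_1,X_2)$, not $F_{0,2}(0)$ itself; its balanced part corresponds to the unstable $\frac{1}{z_1+z_2}$ term of $V_{z_1,z_2}$ evaluated at $z_1=\sv/\mu$, $z_2=-\sv/\mu$, i.e.\ exactly at the pole of that factorization, so your $\frac{1}{\sv}X_1\frac{\partial}{\partial X_1}$ computation cannot be run from the cited generating-function formulas and would have to return to the localization definition of the invariants. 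In short, your write-up reproduces the paper's argument faithfully and makes explicit a point the paper leaves implicit, but the patch for the balanced terms is a plan, not a proof; as it stands, your argument establishes the theorem only modulo the same balanced-term comparison that the paper's one-line conclusion also leaves unaddressed.
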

\begin{proof}

\begin{equation*}
    \begin{aligned}
    & \ \ \ \ \ \mathfrak{h}_{X_1,X_2}(C)= 
    \mathfrak{h}_{X_1,X_2}(\frac{1}{2}\sum_{\alpha\in\{1,2\}}d\xi_{\alpha,0}(Y_1)d\xi_{\alpha,0}(Y_2))
    \\
    &= -[z_1^0z_2^0]\sum_{\alpha,\beta,\gamma\in\{1,2\}}
    \tilde{\xi}^\beta(z_1,X_1)\tilde{\xi}^\gamma(z_2,X_2)
    S_{z_1}(\hat{\phi}_\alpha(q),\phi_\beta)S_{z_2}(\hat{\phi}_\alpha(q),\phi_\gamma)\big|_{Q=1}
    \\
    &= -[z_1^0z_2^0](z_1+z_2)\sum_{\beta,\gamma\in\{1,2\}}V_{z_1,z_2}(\phi_\beta,\phi_\gamma)
    \tilde{\xi}^\beta(z_1,X_1)\tilde{\xi}^\gamma(z_2,X_2)\big|_{Q=1}
    \\
    &= -\frac{1}{\sv}(X_1\frac{\partial}{\partial X_1}+X_2\frac{\partial}{\partial X_2})F_{0,2}(\bt;X_1,X_2).
    \end{aligned}
\end{equation*}
By the integrations by parts,
\begin{equation*}
    \begin{aligned}
        \fh_{X_1,X_2}(C) &= \fh_{X_1,X_2}\Big(\Big(-d_1\circ\frac{1}{dx(Y_1)}-d_2\circ\frac{1}{dx(Y_2)}\Big)\tilde{\omega}_{0,2}\Big) 
        \\
        &= \Big(\frac{1}{\sv}X_1\frac{\partial}{\partial X_1}+\frac{1}{\sv}X_2\frac{\partial}{\partial X_2}\Big)W_{0,2}(t^0,q;X_1,X_2).
    \end{aligned}
\end{equation*}
Since the constant terms of $F_{0,2}$ and $W_{0,2}$ are both zeros, the theorem follows immediately.
\end{proof}

For $(g,m,n)=(0,1,1)$, we define $W_{0,1,1}(t^0,q;\cE_1;X_2)$ as the unique Laurent series of $X_2$ determined by
\begin{equation*}
    \begin{aligned}
        \Big(\frac{1}{\sv}X_2\frac{d}{d X_2} + z_1^{-1}\Big)W_{0,1,1}(t^0,q;\cE_1;X_2) = \int_{y_1\in \SYZ(\cE_1)}e^{x(y_1)/z_1}\fh_{X_2}(C).
    \end{aligned}
\end{equation*}
\begin{theorem}\label{thm:0-1-1mirror}
    \[    
        W_{0,1,1}(t^0,q;\cE_1;X_2) = -\llangle\frac{\kappa_{z_1}(\cE_1)}{z_1-\psi_1}\rrangle_{0,1,1}^{\OGW}(\bt,X_2).
    \]
\end{theorem}
\begin{proof}By \cite[Equation (15)]{FLZ17}, we know 
    \[
        -z\int_{y\in\SYZ(\cE)}e^{x/z}\frac{d\xi_{\beta,0}}{\sqrt{-2}} = \llangle\hat{\phi}_\beta(q),\frac{\kappa_{z}(\cE)}{z-\psi}\rrangle_{0,2}^{\CP,T}\big|_{Q=1}.
    \]
    Then
    \begin{equation*}
    \begin{aligned}
    & -z_1\int_{y_1\in \SYZ(\cE_1)}e^{x(y_1)/z_1}\fh_{X_2}(C)
    \\
    &= \sum_{\alpha\in\{1,2\}} z_1\int_{y_1\in \SYZ(\cE_1)}\frac{d\xi_{\alpha,0}(Y_1)}{\sqrt{-2}}
    \cdot \mathfrak{h}_{X_2}(\frac{d\xi_{\alpha,0}(Y_2)}{\sqrt{-2}})
    \\
    &= [z_2^0]\sum_{\alpha,\gamma\in\{1,2\}}
    \tilde{\xi}^\gamma(z_2,X_2)
    S_{z_1}(\hat{\phi}_\alpha(q),\kappa_{z_1}(\cE_1))S_{z_2}(\hat{\phi}_\alpha(q),\phi_\gamma)\big|_{Q=1}
    \\
    &= [z_2^0]\sum_{\gamma=1,2}\tilde{\xi}^\gamma(z_2,X_2)\cdot (z_1+z_2)\llangle\frac{\kappa_{z_1}(\cE_1)}{z_1-\psi_1},\frac{\phi_{\gamma}}{z_2-\psi_2}\rrangle_{0,2}^{\CP,T}\big|_{Q=1}
    \\
    &= \Big(\frac{1}{\sv}X_2\frac{d}{d X_2}+z_1^{-1}\Big)z_1\sum_{\gamma\in\{1,2\}, k\geq 0} \tilde{\xi}^\gamma_k(X_2)\llangle \frac{\kappa_{z_1}(\cE_1)}{z_1-\psi_1},\phi_\gamma\psi_2^k\rrangle_{0,2}^{\CP,T}\big|_{Q=1}
    \\
    &= \Big(\frac{1}{\sv}X_2\frac{d}{d X_2}+z_1^{-1}\Big)z_1\llangle \frac{\kappa_{z_1}(\cE_1)}{z_1-\psi_1}\rrangle^{\OGW}_{0,1,1}.
    \end{aligned}
\end{equation*}
\end{proof}

\subsection{All genus mirror symmetry}
\begin{theorem}
    [All genus mirror theorem (a)]\label{thm:stableMS} For any $n>0$ and $g\geq 0$, we have
    \[
        F_{g,n}(\bt;X_1,\dots,X_n) =  (-1)^{g-1}W_{g,n}(t^0,q;X_1,\dots,X_n).
    \]
\end{theorem}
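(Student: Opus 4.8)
The plan is to reduce the statement to the stable range $2g-2+n>0$ and then compare the two graph-sum expansions term by term. The two unstable cases are already handled: $(g,n)=(0,1)$ is Theorem \ref{thm:disk} and $(g,n)=(0,2)$ is Theorem \ref{thm:annulus}, both giving the sign $(-1)^{g-1}=-1$ directly. So I would assume $2g-2+n>0$ from here on. On the A-model side, Corollary \ref{thm:open-A-graph-sum-Qone} writes $F_{g,n}(\bt;X_1,\dots,X_n)=\sum_{\vec\Ga\in{\bf\Ga}_{g,n}(\bP^1)}\omega_A^O(\vec\Ga)/|\Aut(\vec\Ga)|$, while on the B-model side the definition $W_{g,n}=\fh_{X_1,\dots,X_n}(\omega_{g,n})$ together with Theorem \ref{thm:B-graph-sum} gives $W_{g,n}(t^0,q;X_1,\dots,X_n)=\sum_{\vec\Ga\in{\bf\Ga}_{g,n}(\bP^1)}\omega_B^O(\vec\Ga)/|\Aut(\vec\Ga)|$. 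Since both sums run over the identical index set ${\bf\Ga}_{g,n}(\bP^1)$ with the same automorphism factors, it suffices to prove the local identity $\omega_A^O(\vec\Ga)=(-1)^{g-1}\omega_B^O(\vec\Ga)$ for every $\vec\Ga$.

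To establish this local identity I would compare the four types of building blocks. The vertex correlators $\langle\prod_{h\in H(v)}\tau_{k(h)}\rangle_{g(v)}$ are literally the same $\psi$-integrals on $\overline{\cM}_{g(v),\val(v)}$ on both sides, so they cancel. For the edges, the A-model weight $\cE^{\alpha,\beta}_{k,l}$ and the B-model weight $\check{B}^{\alpha,\beta}_{k,l}$ are built from the same formula in their respective $R$-matrices; invoking the equality $R_\beta^{\ \alpha}(z)|_{Q=1}=\check{R}_\beta^{\ \alpha}(z)$ from \cite[Proposition 3.10]{FLZ17} gives $\cE^{\alpha,\beta}_{k,l}|_{Q=1}=\check{B}^{\alpha,\beta}_{k,l}$, so edges contribute no discrepancy. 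For the $n$ open leaves, Theorem \ref{thm:open-leaves} provides exactly $(\tilde{\cL}^O)^\beta_k|_{Q=1}=-(\check{\cL}^O)^\beta_k$, contributing a factor $(-1)^n$. The only remaining discrepancies come from the vertex normalization factors $(\sqrt{\Delta^{\beta(v)}(q)})^{2g(v)-2+\val(v)}$ versus $(h_1^{\beta(v)}/\sqrt2)^{2-2g(v)-\val(v)}$, and from the dilaton leaves $(\cL^1)^\beta_k$ versus $\frac{-1}{\sqrt{-2}}\check{h}^\beta_k$.

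The decisive algebraic input is the relation between the B-model local data and the A-model Frobenius data. Expanding $y=\log Y$ in the local coordinate $\zeta_\alpha$ defined by $x=\check{u}^\alpha-\zeta_\alpha^2$ near the branch point $P_\alpha$, and using $P_1P_2=-q$ and $P_2-P_1=\Delta^2(q)$, one finds $(h_1^\alpha)^2=-2/\Delta^\alpha(q)$; equivalently $h_1^\alpha/\sqrt2$ and $1/\sqrt{\Delta^\alpha(q)}$ differ by the universal constant $\sqrt{-1}$. Feeding this into the dilaton-leaf weights and using $R=\check{R}$ together with Lemma \ref{lma:theta-R-matrix}, each dilaton leaf reduces to $\sqrt{-1}^{\,-1}$ times its B-model counterpart, and each vertex normalization to $\sqrt{-1}^{\,2g(v)-2+\val(v)}$ times its B-model counterpart. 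I would then collect all these powers using the combinatorial identity $\sum_{v\in V(\Ga)}(2g(v)-2+\val(v))=2(g-1)+|L^1(\Ga)|+n$, which follows from the genus formula $\sum_v g(v)=g-|E(\Ga)|+|V(\Ga)|-1$ and the valence count $\sum_v\val(v)=2|E(\Ga)|+|L^1(\Ga)|+n$. Since $\sqrt{-1}^{\,2(g-1)}=(-1)^{g-1}$ and all graphs in ${\bf\Ga}_{g,n}(\bP^1)$ share the same genus $g$, this is precisely the mechanism producing the $g$-dependent sign; combined with the prefactor $(-1)^{g(\vec\Ga)-1+n}$ in $\omega_B^O$ and the $(-1)^n$ from the open leaves, the net constant should collapse to $(-1)^{g-1}$.

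The main obstacle is exactly this last bookkeeping step: keeping the half-integer powers of $\Delta^\alpha(q)$, the factors $\sqrt{-2}$ hidden in the normalizations of $\xi_{\alpha,0}$ and of $\check{h}^\alpha_k$, and the branch choices for $\sqrt{\Delta^\alpha(q)}$ and $h_1^\alpha$ mutually consistent, so that after pairing the exponent $2(g-1)+|L^1|+n$ from the vertices against the $-|L^1|$ from the dilaton leaves and the $(-1)^n$ from the open leaves, no residual power of $\sqrt{-1}$ depending on $n$ survives and one is left with the clean sign $(-1)^{g-1}$. I expect this to be the only genuinely delicate point; once the per-element constants are pinned down with the same conventions that make $R=\check{R}$ hold, the term-by-term equality, and hence the theorem, follow.
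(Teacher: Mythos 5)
You reduce to the stable range and compare the graph sums of Corollary \ref{thm:open-A-graph-sum-Qone} and Theorem \ref{thm:B-graph-sum} piece by piece; that is exactly the paper's strategy, and your treatment of the edges (via $R_\beta^{\ \alpha}(z)\big|_{Q=1}=\check R_\beta^{\ \alpha}(z)$) and of the open leaves (via Theorem \ref{thm:open-leaves}, a factor $(-1)^n$) agrees with the paper. The gap is precisely the step you defer to the end. The paper does not recompute the relation between $h_1^\alpha$ and $\Delta^\alpha(q)$ from the local expansion: it imports from \cite[Theorem A]{FLZ17}, as part of the \emph{same} convention package that makes $R=\check R$ hold, the identity $h_1^\alpha/\sqrt2=1/\sqrt{\Delta^\alpha(q)}$. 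With this identity no powers of $\sqrt{-1}$ ever arise: the vertex factors satisfy $(h_1^{\beta(v)}/\sqrt2)^{2-2g(v)-\val(v)}=(\sqrt{\Delta^{\beta(v)}(q)})^{2g(v)-2+\val(v)}$ identically, the dilaton weights satisfy $\frac{-1}{\sqrt{-2}}\check h^\beta_k=(\cL^1)^\beta_k$, and the entire $g$-dependent sign comes from the explicit prefactor $(-1)^{g(\vec\Ga)-1+n}$ in $\omega_B^O$ paired against the $(-1)^n$ from the open leaves, giving $\omega_A^O(\vec\Ga)=(-1)^{g-1}\omega_B^O(\vec\Ga)$ directly.

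Your substitute premise $(h_1^\alpha)^2=-2/\Delta^\alpha(q)$ is not a branch choice away from that identity --- squaring kills all branch ambiguity, so the two statements differ by a genuine factor of $-1$ --- and it does not merely make the bookkeeping ``delicate'': it makes the conclusion false. Running your own scheme to completion: the vertices contribute $\sqrt{-1}^{\,\sum_v(2g(v)-2+\val(v))}=\sqrt{-1}^{\,2(g-1)+|L^1(\Ga)|+n}$, the dilaton leaves contribute $\sqrt{-1}^{\,-|L^1(\Ga)|}$, the open leaves contribute $(-1)^n$, and dividing out the prefactor $(-1)^{g-1+n}$ of $\omega_B^O$ leaves $\omega_A^O(\vec\Ga)=\sqrt{-1}^{\,n}\,\omega_B^O(\vec\Ga)$: an $n$-dependent residue, e.g.\ $+\omega_B^O$ for $g=0$, $n=4$, where the theorem requires $-\omega_B^O$. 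So under your premise the signs cannot ``collapse to $(-1)^{g-1}$''; the premise is inconsistent with the other ingredients you simultaneously use (the normalization $\xi_{\alpha,0}=\sqrt{-2/\Delta^\alpha(q)}\,P_\alpha/(Y-P_\alpha)$ that enters the proof of Theorem \ref{thm:open-leaves}, the DOSS prefactor, and $R=\check R$). All of these $\sqrt{-1}$'s must be drawn from one coherent package --- which is what the citation of \cite{FLZ17} supplies --- whereas a naive expansion of $y$ in the coordinate $\zeta_\alpha$ taken in isolation pins $h_1^\alpha$ down only up to exactly the ambiguity at issue, and you have resolved it incompatibly. Since settling this consistency is the entire content of the stable case beyond the citations, and your proposal ends by conceding it is unresolved, the proof is incomplete as written.
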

\begin{proof}
    For the unstable cases $(g,n)=(0,1)$ and $(0,2)$, this theorem is Theorem \ref{thm:disk} and Theorem \ref{thm:annulus} respectively.

    For stable cases $2g-2+n>0$, it suffices to show the A- and B-model graph sum formula in Corollary \ref{thm:open-A-graph-sum-Qone} and Theorem \ref{thm:B-graph-sum} are equal.
    By \cite[Theorem A]{FLZ17}, we have
    \[
        R_\beta^{\ \alpha}(z)\big|_{Q=1} = \check{R}_\beta^{\ \alpha}(z),
    \]
    and 
    \[
        \frac{h^\alpha_1}{\sqrt{2}} = \frac{1}{\sqrt{\Delta^\alpha(q)}}.
    \]
    Hence, the weights in the graph sum match except for the open leaves. Since 
    Theorem \ref{thm:open-leaves} identifies the A- and B- open leaves, the theorem follows immediately.
\end{proof}

\begin{theorem}[All genus mirror theorem (b)]\label{thm:SYZ-open-mirror}
    Suppose $g,m,n\geq 0$, for any $\cE_1,\dots,\cE_m\in K_T(\CP)$, we have
    \begin{align*}
        &W_{g,m,n}(t^0,q;\cE_1,\dots,\cE_m;X_1,\dots,X_n)
        \\
        &=(-1)^{g-1}\llangle\frac{\kappa_{z_1}(\cE_1)}{z_1-\psi_1},\dots,\frac{\kappa_{z_m}(\cE_m)}{z_m-\psi_m}\rrangle_{g,m,n}^{\OGW}(\bt,X_1,\dots,X_n).
    \end{align*}
\end{theorem}
\begin{proof}
    For the case $n=0$, this theorem is Theorem \cite[Theorem B]{FLZ17}. For the case $m=0$, this theorem is Theorem \ref{thm:stableMS}.
    For the unstable case $(g,m,n)=(0,1,1)$, this theorem is Theorem \ref{thm:0-1-1mirror}.

    For the stable case, it suffices to show the A- and B-model graph sum formula in Theorem \ref{thm:A-SYZ-open} and \eqref{eqn:B-SYZ-open} are equal. Following the proof of Theorem \ref{thm:stableMS}, it remains to show the $\SYZ$ leaf matches. 
    By \cite[Equation (15)]{FLZ17}, we know 
    $$
        \int_{y\in \SYZ(\cE)}e^{x(y)/z}\frac{W^\beta_i}{\sqrt{-2}} = -z^{-i-1}\llangle\hat{\phi}_\beta(q),\frac{\kappa_z(\cE)}{z-\psi}\rrangle_{0,2}^{\CP,T}\Big|_{Q=1}.
    $$
    By Lemma \ref{lma:theta-R-matrix}, we have
    \begin{equation*}
        (\check{\cL}^\SYZ)^{\beta(l_j)}_{k(l_j)}(l_j) = -\sum_{i=0}^k\sum_{\gamma\in\{1,2\}}([z^{k-i}]\check{R}_\gamma^{\ \beta}(-z))z_j^{-i-1}\llangle\hat{\phi}_\gamma(q),\frac{\kappa_{z_j}(\cE_j)}{z_j-\psi}\rrangle_{0,2}^{\CP,T}\Big|_{Q=1}.
    \end{equation*}
    In A-model, ${\bf u}_j(z) = \sum_{\alpha\in\{1,2\}}{\bf u}_j^\alpha(z)\phi_\alpha(q)$ is replaced by
    \begin{equation}\label{eqn:A-u}
        {\bf u}_j^\alpha(z) = \sum_{a\geq 0}\big(\phi^\alpha(q),\kappa_{z_j}(\cE_j)\big)_{\CP,T}z_j^{-a-1}z^a,
    \end{equation}
    so 
    \begin{equation*}
        \begin{aligned}
            &[z^i]\left(\frac{{\bf u}_j^\alpha(z)}{\sqrt{\Delta^\alpha(q)}}S^{\hat{\underline{\gamma}}}_{\ \hat{\underline{\alpha}}}(z)\right)_+ 
            \\
            &= \sum_{b\geq 0}z_j^{-i-b-2}\llangle\hat{\phi}_\gamma(q),\tau_b(\hat{\phi}_\alpha(q))\rrangle_{0,2}^{\CP,T}\big(\hat{\phi}_\alpha(q),\kappa_{z_j}(\cE_j)\big)_{\CP,T}
            \\
            &= z_j^{-i-1}\llangle\hat{\phi}_\gamma(q),\frac{\kappa_{z_j}(\cE_j)}{z_j-\psi}\rrangle_{0,2}^{\CP,T}.
        \end{aligned}
    \end{equation*}
    In other words, for the ${\bf u}_j(z)$ corresponding to $\frac{\kappa_{z_j}(\cE_{z_j})}{z_j-\psi}$, we have
    $$
       (\cL^{\bf u})_{k(l_j)}^{\beta(l_j)}(l_j)\Big|_{Q=1} = - (\check{\cL}^\SYZ)^{\beta(l_j)}_{k(l_j)}(l_j).
    $$
    The theorem follows immediately. 
\end{proof}

\section{Geometric open Gromov-Witten theory of $(\CP,\RP)$}\label{sec:geo-open}

\subsection{Geometric counting via coherent boundary conditions}\label{sec:geo-counting}
In previous sections, we established the counting of stable maps from bordered Riemann surfaces to $(\CP,\RP)$ by means of integration over the fixed locus of the moduli space $\overline{\cM}_{(g,h),n}(\CP,\RP|\beta',\vec{\mu})$.
We further demonstrated that these virtual invariants are systematically encoded
by the topological recursion, via the local expansion of the multidifferential forms $\omega_{g,n}$ on the boundary of mirror curve $C_q$. 

In symplectic geometry, the enumeration of stable maps by means of integration over the global moduli space $\overline{\cM}_{(0,h),n}(\CP,\RP|\beta',\vec{\mu})$  
has also garnered significant interest. We refer to this global enumeration as \emph{geometric counting}. 
One approach defines it via multisections intersection, while another employs the coherent boundary conditions and the localization formula for the orbifold with corners. 
In the non-equivariant limit, the construction using the coherent boundary conditions reduces to the non-equivariant counting defined by sections of bundles.

The geometric counting contains two contribution: one arises from the integration over the fixed locus $F$, which is the virtual counting defined  in Section \ref{sec:virOGW}; the other one is the corner contribution.

The work \cite[Theorem 2.18, Definition 5.9]{BNPT22} proposes a combinatorial formula for the geometric counting, which is expressed solely in terms of fixed-point graphs .
In the genus 0 case, by carefully adjusting the coefficients preceding each fixed-point graph, 
they incorporate the corner contribution into the formula that comprises only fixed-point loci. 
Meanwhile, the authors conjecture that there exists a geometric construction via coherent boundary conditions whereby this combinatorial formula holds for geometric counting in all genera. 
If this conjecture is proven, the counting of fixed-point graphs, (i.e., the virtual counting we define in Section \ref{sec:virOGW}), 
will serve as the fundamental building block for calculating geometric intersection numbers. 
Our Mirror Theorem \ref{thm:stableMS} and \ref{thm:SYZ-open-mirror} provide a recursive algorithm for geometric counting.

\subsection{Combinatorial formula}\label{sec:comb-OGW}
Given the moduli space $\overline{\cM}_{(g,h),n}(\CP,\RP|\beta',\vec{\mu})$, one can associate a decorated graph $S = (V_b \sqcup V_w, E, g, {\sf{I}}, \vec{d},\mu)$, where
\begin{itemize}
    \item [(i)] $V_b$ is the set of black vertex, $V_w$ is the set of white vertex, and $E$ is the set of edges. There is exactly one black vertex and $l(\vec{\mu})$ white vertices. There are exactly a single edge connecting the black vertex and each white vertex.
    \item [(ii)] The black vertex is decorated by $g,{\sf I},\vec{d}$, corresponding the genus, marked points and the degree of the moduli space.
    \item [(iii)] Each white vertex is decorated by a unique entry $\mu_i$ in $\vec{\mu}$. The white vertices are unordered, so the decoration is unique.
\end{itemize}
A moduli specification $S$ is the disjoint union of such decorated graphs. An isomorphism between two moduli specifications is a graph isomorphism which respects the decorations.

Given $\vec{a}=(a_i)_{i\in{\sf I}}$ and $\vec{\gamma} = (\gamma_i)_{i\in {\sf I}}$, where $a_i\in\bZ_{\geq 0}$ and $\gamma_i\in H^*_T(\bP^1)$. If the moduli specification $S$ is connected, we define 
$$
    I(S,\vec{a},\vec{\gamma}) = \langle\tau_{a_1}(\gamma_1)\dots\tau_{a_n}(\gamma_n)\rangle^{\OGW}_{g,\beta',\vec{\mu}}.
$$
For disconnected moduli specification $S$ with finitely many connected components $S_i$, we define
$$
    I(S,\vec{a},\vec{\gamma}) = \prod_i I(S_i,\vec{a}|_{S_i}, \vec{\gamma}|_{S_i}).
$$
\par
A moduli specification is \emph{pure} if $\mu(w)\neq 0$ for each white vertex $w$. 
\begin{defn}\label{def:mor-decoration}\rm
A morphism from a moduli specification $S^{new}$ to a moduli specification $S^{old}$ is a decorated graph 
$M=(V_b\sqcup V_w, E, H^{CB},\widetilde{E},g, {\sf I},\vec{d},\mu,\sigma, \mu^{old})$ such that
\begin{itemize}
    \item [(a)] $(V_b\sqcup V_w, E,g, {\sf I},\vec{d},\mu)$ is the moduli specification $S^{new}$.
    \item [(b)] The wavy edges $\widetilde{E}$ connect white vertices.
    \item [(c)] The contracted boundary half-edges $H^{CB}$ are half-edges which emanate from black vertices. 
    \item [(d)] A wavy flag is a pair of a wavy edge and an incident white vertex: $$\{(\tilde{e},v)\in \widetilde{E}\times V_w: v\in\tilde{e}\}.$$
     For any white vertex $v$, let $\si_v$ be a cyclic order of the wavy flags emanate from $v$.
    \item [(e)] An involution $\tau$ on wavy flags: if $v,v'$ are two incident vertices of $\tilde{e}\in\tilde{E}$, then $\tau(\tilde{e},v) := (\tilde{e},v')$.
    \item [(f)] The cyclic order $\si$ and the involution $\tau$ form the set of cycles of $\si^{-1}\circ\tau$: $$\quad \{(v_1, \tilde{e}_1,v_2,\dots,\tilde{e}_{m-1},v_m)~|~ v_i\in V_w, \tilde{e}_i\in\widetilde{E}, v_i, v_{i+1}\in \tilde{e}_i, v_1 = v_m\}/\sim,$$ 
    where the equivalence relation $\sim$ identifies tuples that differ by a cyclic permutation. Here for the white vertex without incident wavy edge, we assign the cycle to be its own. The function $\mu^{old}$ from the set of cycles of $\si^{-1}\circ\tau$ to $\bZ$. 
    \item [(g)] $S^{old}$ is the desingularization of $M$, where the desingularization is the moduli specification as follows.
    \begin{itemize}
        \item [$\bullet$] For every connected component of the graph $G=(V_b\sqcup V_w, E\sqcup \widetilde{E})$, we associate a black vertex.
        \item [$\bullet$] For such a black vertex $v'$, we define 
        \begin{equation*}
            \quad \quad\vec{d}(v') = \sum_v \vec{d}(v), \quad {\sf I}(v') = \bigsqcup_v {\sf I}(v),\quad g(v') = h^1(G)+\sum_v g(v),
        \end{equation*}
        where we sum or union over the black vertices $v$ in $G$ associated to the connected component represented by $v'$.
        \item [$\bullet$] For a cycle $c$ from the set of $\si^{-1}\circ\tau$ in the connected component corresponding to $v'$, we assign a white vertex $w$ connecting $v'$, with $\mu(w)= \mu^{old}(c)$.
        \item [$\bullet$] For each contracted boundary half-edge $h$ in the connected component corresponding to $v'$, we assign a white vertex $w$ connecting $v'$, with $\mu(w)=0$.
    \end{itemize}
\end{itemize} 
\end{defn}
Let $\Hom(S^{new},S^{old})$ be the set of isomorphism types of morphisms from $S^{new}$ to $S^{old}$.

For a contracted boundary half-edge $h$ of $M$, which is attached to a black vertex $v$, we can assign a degree $d$ to it:
$$
    d(h) := d_+(v) - \sum_{w\in V_w(v) : \mu(w)>0} \mu(w),
$$
where $V_w(v)$ is the set of white vertices attached to $v$. 

In order to state the combinatorial formula proposed in \cite{BNPT22}, we also introduce the boundary contribution graphs.
\begin{defn}
    \rm
    A boundary contribution (BC) graph $G=(V,E,L,\mu)$ consists of the following data:
    \begin{itemize}
        \item [(a)] The set of vertices $V$. The set of edges $E=\vec{E}\sqcup\widetilde{E}$, where $\vec{E}$ is the set of directed edges and $\widetilde{E}$ is the set of wavy edges.
            The set of the oriented loops $L$. It requires that every vertex has exactly one incoming edge and one outgoing edge. Every wavy edges connects two different vertices.
        \item [(b)] The set of faces $F^{new}\cup F^{old}$. The new faces $F^{new}$ consists of oriented loops $L$ and the closed paths of $(V,\vec{E})$. The old faces $F^{old}$ formed by oriented loops $L$ and all oriented sequences $s$ formed by $(V,\vec{E}\sqcup \widetilde{E})$, 
        where $s = (v_1, \vec{e}_1, v_2, \tilde{e}_2, \dots,\tilde{e}_{2m})$ (up to cyclic order), $\vec{e}_{2i+1} \in \vec{E}$ connects $v_{2i+1}$ to $v_{2i+2}$, and $\tilde{e}_{2i}\in\widetilde{E}$ is the wavy edge connects $v_{2i}$ and $v_{2i+1\pmod {2m}}$.
        \item [(c)] The perimeter function $\mu: F^{new}\cup F^{old}\rightarrow \bZ$.
    \end{itemize}
\end{defn}

\begin{defn}\rm
    A metric on a boundary contribution graph $G$ is an assignment of length $x_e\in\bR$ for every $e\in\vec{E}$ such that
    \begin{itemize}
        \item [(a)] The sum of lengths of the oriented edges of any circuit of the graph is an integer.
        \item [(b)] The sum of lengths of the oriented edges of a new or old face is the perimeter of that face.
        \item [(c)] If $e\in\vec{E}$ is an edge of a new face $f$, then $x_e \mu(f)>0$.
    \end{itemize}
\end{defn}
Let $W_G\subset\bR^{\vec{E}}$ be the space of metrics on $G$ and $\overline{W}_G$ be its closure in $\bR^{\vec{E}}$.
Let $\cE\subset\vec{E}$ be a set of directed edges such that $\cE\sqcup\widetilde{E}$ forms a spanning forest of $G$. We define 
\[
    \Omega_G = \bigwedge_{e\in\cE} dx_e,
\]
and define the volume of a BC graph $G$ as 
\[
    \text{Vol}_G = (-1)^{alt}\left|\Bigg(\prod_{f\in F^{new}\backslash L}\mu(f)\Bigg)\int_{\overline{W}_G}\Omega_G\right|,
\]
where $alt$ is the number of edges of $\widetilde{E}$ which touch two new faces $f_1,f_2$ with $\mu(f_1)\mu(f_2) < 0$.

We can assign a boundary contribution graph for morphism $M\in \Hom(S^{new},S^{old})$ in the following way (See Figure \ref{fig:boundary}).
\begin{defn}\rm 
    Given a morphism
    $$
        M=(V_b\sqcup V_w, E, H^{CB},\widetilde{E},g, {\sf I},\vec{d},\mu,\sigma, \mu^{old})\in \Hom(S^{new}, S^{old}),
    $$
    one can assign a BC graph, denoted by $BC(M) = (V,E,L,\mu)$ as follows:
    \begin{itemize}
        \item For any wavy flag $(\tilde{e},v)$, assign a vertex.
        \item For the wavy flags $(\tilde{e},v), (\tilde{e},v')$, where $\tilde{e}$ connects two vertices $v,v'$, one assign a wavy edge connecting the corresponding vertices.
        \item Draw the directed edges $\vec{e}$ with respect to the cycles $\sigma_v$.
        \item Assign a loop for the isolated white vertex.
        \item So far, we get a graph $G$. The new faces $f^{new}$ of $G$ corresponds to the white vertices $w$ of $M$, and we assign $\mu(f^{new})=\mu(w)$.
        The old faces $f^{old}$ corresponds to the cycles $c$ of type $\si^{-1}\circ\tau$, and we assign $\mu(f^{old})=\mu^{old}(c)$.
    \end{itemize}
\end{defn}

\begin{figure}[H]    
\begin{center}   
\tikzset{every picture/.style={line width=0.65pt}} 

\begin{tikzpicture}[x=0.65pt,y=0.65pt,yscale=-1,xscale=1]

\draw [line width=0.75]    (60,171.5) .. controls (61.67,169.83) and (63.33,169.83) .. (65,171.5) .. controls (66.67,173.17) and (68.33,173.17) .. (70,171.5) .. controls (71.67,169.83) and (73.33,169.83) .. (75,171.5) .. controls (76.67,173.17) and (78.33,173.17) .. (80,171.5) .. controls (81.67,169.83) and (83.33,169.83) .. (85,171.5) .. controls (86.67,173.17) and (88.33,173.17) .. (90,171.5) .. controls (91.67,169.83) and (93.33,169.83) .. (95,171.5) .. controls (96.67,173.17) and (98.33,173.17) .. (100,171.5) .. controls (101.67,169.83) and (103.33,169.83) .. (105,171.5) .. controls (106.67,173.17) and (108.33,173.17) .. (110,171.5) .. controls (111.67,169.83) and (113.33,169.83) .. (115,171.5) .. controls (116.67,173.17) and (118.33,173.17) .. (120,171.5) -- (120,171.5) ;
\draw [line width=0.75]    (120,171.5) .. controls (121.67,169.83) and (123.33,169.83) .. (125,171.5) .. controls (126.67,173.17) and (128.33,173.17) .. (130,171.5) .. controls (131.67,169.83) and (133.33,169.83) .. (135,171.5) .. controls (136.67,173.17) and (138.33,173.17) .. (140,171.5) .. controls (141.67,169.83) and (143.33,169.83) .. (145,171.5) .. controls (146.67,173.17) and (148.33,173.17) .. (150,171.5) .. controls (151.67,169.83) and (153.33,169.83) .. (155,171.5) .. controls (156.67,173.17) and (158.33,173.17) .. (160,171.5) .. controls (161.67,169.83) and (163.33,169.83) .. (165,171.5) .. controls (166.67,173.17) and (168.33,173.17) .. (170,171.5) .. controls (171.67,169.83) and (173.33,169.83) .. (175,171.5) .. controls (176.67,173.17) and (178.33,173.17) .. (180,171.5) -- (180,171.5) ;
\draw [line width=0.75]    (120,171.5) .. controls (119.65,169.17) and (120.65,167.83) .. (122.98,167.49) .. controls (125.31,167.14) and (126.31,165.8) .. (125.97,163.47) .. controls (125.62,161.14) and (126.62,159.8) .. (128.95,159.46) .. controls (131.28,159.12) and (132.28,157.78) .. (131.93,155.45) .. controls (131.59,153.12) and (132.59,151.78) .. (134.92,151.44) .. controls (137.25,151.09) and (138.25,149.75) .. (137.9,147.42) .. controls (137.55,145.09) and (138.55,143.75) .. (140.88,143.41) .. controls (143.21,143.07) and (144.21,141.73) .. (143.86,139.4) .. controls (143.52,137.07) and (144.52,135.73) .. (146.85,135.39) -- (149.7,131.55) -- (149.7,131.55) ;
\draw [line width=0.75]    (149.7,131.55) .. controls (152.03,131.87) and (153.04,133.2) .. (152.72,135.53) .. controls (152.4,137.86) and (153.41,139.19) .. (155.74,139.52) .. controls (158.07,139.84) and (159.08,141.17) .. (158.76,143.5) .. controls (158.44,145.84) and (159.45,147.17) .. (161.79,147.49) .. controls (164.12,147.81) and (165.13,149.14) .. (164.81,151.47) .. controls (164.49,153.8) and (165.5,155.13) .. (167.83,155.45) .. controls (170.16,155.78) and (171.17,157.11) .. (170.85,159.44) .. controls (170.53,161.77) and (171.54,163.1) .. (173.87,163.42) .. controls (176.2,163.74) and (177.21,165.07) .. (176.89,167.4) .. controls (176.57,169.73) and (177.58,171.06) .. (179.91,171.39) -- (180,171.5) -- (180,171.5) ;
\draw [line width=0.75]    (257.87,166.47) .. controls (258.52,164.2) and (259.97,163.39) .. (262.24,164.04) .. controls (264.51,164.69) and (265.96,163.89) .. (266.61,161.62) .. controls (267.26,159.35) and (268.71,158.54) .. (270.98,159.19) .. controls (273.25,159.84) and (274.71,159.04) .. (275.36,156.77) .. controls (276.01,154.5) and (277.46,153.69) .. (279.73,154.34) .. controls (282,154.99) and (283.45,154.18) .. (284.1,151.91) .. controls (284.75,149.64) and (286.2,148.84) .. (288.47,149.49) .. controls (290.74,150.14) and (292.2,149.33) .. (292.85,147.06) .. controls (293.5,144.79) and (294.95,143.99) .. (297.22,144.64) .. controls (299.49,145.29) and (300.94,144.48) .. (301.59,142.21) .. controls (302.24,139.94) and (303.69,139.14) .. (305.96,139.79) .. controls (308.23,140.44) and (309.68,139.63) .. (310.33,137.36) .. controls (310.98,135.09) and (312.44,134.29) .. (314.71,134.94) -- (315.13,134.7) -- (315.13,134.7) ;
\draw [line width=0.75]    (358.28,149.36) .. controls (360.61,149.7) and (361.61,151.04) .. (361.28,153.37) .. controls (360.95,155.7) and (361.95,157.04) .. (364.28,157.37) .. controls (366.61,157.7) and (367.61,159.04) .. (367.27,161.37) .. controls (366.94,163.7) and (367.94,165.04) .. (370.27,165.38) .. controls (372.6,165.71) and (373.6,167.05) .. (373.26,169.38) .. controls (372.93,171.71) and (373.93,173.05) .. (376.26,173.38) .. controls (378.59,173.72) and (379.59,175.06) .. (379.25,177.39) .. controls (378.92,179.72) and (379.92,181.06) .. (382.25,181.39) .. controls (384.58,181.72) and (385.58,183.06) .. (385.24,185.39) .. controls (384.91,187.72) and (385.91,189.06) .. (388.24,189.4) -- (390,191.75) -- (390,191.75) ;
\draw [line width=0.75]    (509.2,111.45) .. controls (509.78,113.74) and (508.93,115.17) .. (506.64,115.75) .. controls (504.36,116.33) and (503.51,117.76) .. (504.09,120.04) .. controls (504.67,122.33) and (503.82,123.76) .. (501.53,124.34) .. controls (499.24,124.92) and (498.39,126.35) .. (498.97,128.64) .. controls (499.55,130.92) and (498.7,132.35) .. (496.42,132.93) .. controls (494.13,133.51) and (493.28,134.94) .. (493.86,137.23) .. controls (494.44,139.52) and (493.59,140.95) .. (491.3,141.53) .. controls (489.01,142.11) and (488.16,143.54) .. (488.75,145.83) .. controls (489.33,148.12) and (488.48,149.55) .. (486.19,150.12) .. controls (483.9,150.7) and (483.05,152.13) .. (483.63,154.42) .. controls (484.22,156.71) and (483.37,158.14) .. (481.08,158.72) .. controls (478.79,159.29) and (477.94,160.72) .. (478.52,163.01) .. controls (479.1,165.3) and (478.25,166.73) .. (475.96,167.31) .. controls (473.67,167.89) and (472.82,169.32) .. (473.41,171.61) .. controls (473.99,173.9) and (473.14,175.33) .. (470.85,175.9) .. controls (468.56,176.48) and (467.71,177.91) .. (468.29,180.2) .. controls (468.88,182.49) and (468.03,183.92) .. (465.74,184.5) .. controls (463.45,185.08) and (462.6,186.51) .. (463.18,188.8) -- (463,189.1) -- (463,189.1) ;
\draw    (390,191.75) .. controls (427.43,208.59) and (453.88,195.93) .. (461.5,191.97) ;
\draw [shift={(463.26,191.05)}, rotate = 154.28] [color={rgb, 255:red, 0; green, 0; blue, 0 }  ][line width=0.75]    (10.93,-3.29) .. controls (6.95,-1.4) and (3.31,-0.3) .. (0,0) .. controls (3.31,0.3) and (6.95,1.4) .. (10.93,3.29)   ;
\draw    (463.26,191.05) .. controls (432.96,176.71) and (403.65,187.09) .. (391.73,191.16) ;
\draw [shift={(390,191.75)}, rotate = 341.67] [color={rgb, 255:red, 0; green, 0; blue, 0 }  ][line width=0.75]    (10.93,-3.29) .. controls (6.95,-1.4) and (3.31,-0.3) .. (0,0) .. controls (3.31,0.3) and (6.95,1.4) .. (10.93,3.29)   ;
\draw [line width=0.75]    (373.93,99.19) .. controls (375.88,97.86) and (377.51,98.16) .. (378.84,100.11) .. controls (380.17,102.06) and (381.81,102.36) .. (383.76,101.03) .. controls (385.7,99.7) and (387.34,100) .. (388.67,101.94) .. controls (390,103.89) and (391.64,104.19) .. (393.59,102.86) .. controls (395.54,101.53) and (397.17,101.83) .. (398.5,103.78) .. controls (399.83,105.73) and (401.47,106.03) .. (403.42,104.7) .. controls (405.37,103.37) and (407,103.67) .. (408.33,105.62) .. controls (409.66,107.57) and (411.3,107.87) .. (413.25,106.54) .. controls (415.2,105.21) and (416.83,105.51) .. (418.16,107.46) .. controls (419.5,109.4) and (421.14,109.7) .. (423.08,108.37) .. controls (425.03,107.04) and (426.66,107.34) .. (427.99,109.29) .. controls (429.32,111.24) and (430.96,111.54) .. (432.91,110.21) .. controls (434.86,108.88) and (436.49,109.18) .. (437.82,111.13) .. controls (439.15,113.08) and (440.79,113.38) .. (442.74,112.05) -- (446.5,112.75) -- (446.5,112.75) ;
\draw    (446.5,112.75) .. controls (476.21,129.93) and (500.72,116.84) .. (507.55,112.54) ;
\draw [shift={(509.2,111.45)}, rotate = 145.84] [color={rgb, 255:red, 0; green, 0; blue, 0 }  ][line width=0.75]    (10.93,-3.29) .. controls (6.95,-1.4) and (3.31,-0.3) .. (0,0) .. controls (3.31,0.3) and (6.95,1.4) .. (10.93,3.29)   ;
\draw    (509.2,111.45) .. controls (478.96,97.27) and (458.1,106.77) .. (448.13,111.9) ;
\draw [shift={(446.5,112.75)}, rotate = 332.45] [color={rgb, 255:red, 0; green, 0; blue, 0 }  ][line width=0.75]    (10.93,-3.29) .. controls (6.95,-1.4) and (3.31,-0.3) .. (0,0) .. controls (3.31,0.3) and (6.95,1.4) .. (10.93,3.29)   ;
\draw    (373.93,99.19) .. controls (346.36,97.34) and (321.16,106.65) .. (315.46,133.06) ;
\draw [shift={(315.13,134.7)}, rotate = 280.24] [color={rgb, 255:red, 0; green, 0; blue, 0 }  ][line width=0.75]    (10.93,-3.29) .. controls (6.95,-1.4) and (3.31,-0.3) .. (0,0) .. controls (3.31,0.3) and (6.95,1.4) .. (10.93,3.29)   ;
\draw    (358.28,149.36) .. controls (376.59,138.34) and (375.22,111.88) .. (374.13,101.06) ;
\draw [shift={(373.93,99.19)}, rotate = 83.64] [color={rgb, 255:red, 0; green, 0; blue, 0 }  ][line width=0.75]    (10.93,-3.29) .. controls (6.95,-1.4) and (3.31,-0.3) .. (0,0) .. controls (3.31,0.3) and (6.95,1.4) .. (10.93,3.29)   ;
\draw    (315.13,134.7) .. controls (324.54,141.08) and (347.98,147.35) .. (356.44,149.03) ;
\draw [shift={(358.28,149.36)}, rotate = 188.58] [color={rgb, 255:red, 0; green, 0; blue, 0 }  ][line width=0.75]    (10.93,-3.29) .. controls (6.95,-1.4) and (3.31,-0.3) .. (0,0) .. controls (3.31,0.3) and (6.95,1.4) .. (10.93,3.29)   ;
\draw   (209.8,166.88) -- (221.92,166.88) -- (221.92,164.5) -- (230,169.25) -- (221.92,174) -- (221.92,171.63) -- (209.8,171.63) -- cycle ;
\draw    (257.87,166.47) .. controls (293.16,196.78) and (328.1,170.44) .. (259.36,165.78) ;
\draw [shift={(258.32,165.71)}, rotate = 3.64] [color={rgb, 255:red, 0; green, 0; blue, 0 }  ][line width=0.75]    (10.93,-3.29) .. controls (6.95,-1.4) and (3.31,-0.3) .. (0,0) .. controls (3.31,0.3) and (6.95,1.4) .. (10.93,3.29)   ;

\draw (145,126.9) node [anchor=north west][inner sep=0.75pt]    {$\circ $};
\draw (174,166.1) node [anchor=north west][inner sep=0.75pt]    {$\circ $};
\draw (115,165.9) node [anchor=north west][inner sep=0.75pt]    {$\circ $};
\draw (55,165.9) node [anchor=north west][inner sep=0.75pt]    {$\circ $};
\draw (47.6,174.54) node [anchor=north west][inner sep=0.75pt]    {$1$};
\draw (115.2,174.54) node [anchor=north west][inner sep=0.75pt]    {$2$};
\draw (144,113.34) node [anchor=north west][inner sep=0.75pt]    {$4$};
\draw (175.6,173.98) node [anchor=north west][inner sep=0.75pt]    {$3$};
\draw (252.4,159.8) node [anchor=north west][inner sep=0.75pt]    {$\bullet $};
\draw (251.4,171.8) node [anchor=north west][inner sep=0.75pt]    {$\tilde{e}_{12}$};
\draw (310.97,127.95) node [anchor=north west][inner sep=0.75pt]  [rotate=-19.5]  {$\bullet $};
\draw (292.73,106.38) node [anchor=north west][inner sep=0.75pt]    {$\tilde{e}_{21}$};
\draw (355.73,143.6) node [anchor=north west][inner sep=0.75pt]  [rotate=-19.5]  {$\bullet $};
\draw (338.77,152.18) node [anchor=north west][inner sep=0.75pt]    {$\tilde{e}_{23}$};
\draw (371.48,93.13) node [anchor=north west][inner sep=0.75pt]  [rotate=-19.5]  {$\bullet $};
\draw (371.79,73.1) node [anchor=north west][inner sep=0.75pt]    {$\tilde{e}_{24}$};
\draw (384.9,186.6) node [anchor=north west][inner sep=0.75pt]    {$\bullet $};
\draw (376.9,196.3) node [anchor=north west][inner sep=0.75pt]    {$\tilde{e}_{32}$};
\draw (456.9,186.6) node [anchor=north west][inner sep=0.75pt]    {$\bullet $};
\draw (465.4,197.1) node [anchor=north west][inner sep=0.75pt]    {$\tilde{e}_{34}$};
\draw (441.4,107.3) node [anchor=north west][inner sep=0.75pt]    {$\bullet $};
\draw (441.9,118.3) node [anchor=north west][inner sep=0.75pt]    {$\tilde{e}_{42}$};
\draw (503.4,107.3) node [anchor=north west][inner sep=0.75pt]    {$\bullet $};
\draw (511.2,114.85) node [anchor=north west][inner sep=0.75pt]    {$\tilde{e}_{43}$};
\draw (114,228.4) node [anchor=north west][inner sep=0.75pt]    {$M$};
\draw (368,228.4) node [anchor=north west][inner sep=0.75pt]    {$BC( M)$};

\end{tikzpicture}
\end{center}
\caption{boundary contribution graph}\label{fig:boundary}
\begin{minipage}{\textwidth}
    \raggedright
    The left hand side is an example of morphism between moduli specification (just show white vertices and wavy edges); the right hand side is the associated boundary contribution graph.
Here $\tilde{e}_{ij}$ is the assigned vertex of the wavy flag from $i$ to $j$.
\end{minipage}
\end{figure}

Now we state the combinatorial formula in \cite{BNPT22}.
\begin{defn}\rm\label{def:comb-OGW}
    Let $S$ be a moduli specification, $\vec{a},\vec{\gamma}$ be vectors of nonnegative integers and cohomology classes in $H^*_T(\CP)$. Define the \emph{combinatorial} open Gromov-Witten invariants of $(S,\vec{a},\vec{\gamma})$ as:
    $$
        \text{OGW}(S,\vec{a},\vec{\gamma}) = \sum_{S'\text{ is pure}}\sum_{M\in \Hom(S',S)} \frac{1}{|\Aut(M)|}\frac{\text{Vol}_{BC(M)}}{(-\sv)^{|\widetilde{E}|}}\left(\prod_{h\in H^{CB}(M)}\frac{d(h)}{\sv}\right)I(S',\vec{a},\vec{\gamma}).
    $$
\end{defn} 
The main conjecture in \cite{BNPT22} states that
\begin{conjecture}
    There exists a geometric definition for $\text{OGW}(S,\vec{a},\vec{\gamma})$ for all moduli specifications $S$.
\end{conjecture}
\begin{remark}
   It is proved in \cite{BNPT22} that the coherent boundary condition serves as the geometric definition of $\text{OGW}(S,\vec{a},\vec{\gamma})$ in genus 0.
\end{remark}

\subsection{Generating function of geometric counting}\label{sec:gen-geo}
Let $S=(V_b \sqcup V_w, E, g, {\sf{I}}, \vec{d},\mu)$ be a moduli specification, $\vec{a}=(a_i)_{i\in{\sf I}}$ and $\vec{\gamma}=(\gamma_i)_{i\in{\sf I}}$.
We could change the decoration of $S$ in the following two ways:
\begin{itemize}
    \item [(1)] (add marked points) For every $l\in\bZ_{\geq 1}$, we define $S^{[l]}$ by extending the set of labels on the black vertices to ${\sf I}^{[l]}= {\sf I}\sqcup \{1,\dots,l\}$, while keeping all other decorations unchanged.
    
    The vector $\vec{a}^{[l]}$ is extended accordingly by setting $\vec{a}^{[l]}_i = a_i$ for $i\in {\sf I}$ and $\vec{a}^{[l]}_i=0$ for $i\in\{1,\dots,l\}$.
    The vector $\vec{\gamma}^{[l]}$ is extended by setting $\vec{\gamma}^{[l]}=\gamma_i$ for $i\in {\sf I}$ and $\vec{\gamma}^{[l]}_i=\bt$ for $i\in\{1,\dots,l\}$, where $\bt\in H^*_T(\CP)$.

    \item [(2)] (forget degree) For a black vertex $v\in V_b$, its decoration $d_v = d(v)$ corresponding to the curve degree $\beta \in H_2(\bP^1)$ is given by
    $$
        d(v) = d_+(v) - \sum_{w\in V_w(v) : \mu(w)>0} \mu(w),
    $$
    where $V_w(v)$ is the set of white vertices attached to $v$.

    By forgetting the decoration $\vec{d}$, we obtain from $S$ a decorated graph $$\Ga_S = (V_b \sqcup V_w, E, g, {\sf{I}}, \mu).$$ 
    We say $\Ga_S$ is \emph{pure} if $\mu(w)\neq 0$ for each white vertex $w$.

    Conversely, given a vector $(d_v)_{v\in V_b}$, we define 
    $\Ga_S[d_v]$ as the moduli specification by assigning $\vec{d}$ as follows
    \begin{equation}\label{eqn:vec-degree}
        \qquad d_+(v) = d_v+\sum_{w\in V_w(v):\mu(w)>0} \mu(w),\quad d_-(v) = d_v - \sum_{w\in V_w(v):\mu(w)<0} \mu(w).
    \end{equation}
 
    Different vectors $(d_v)$ may yield the same moduli specification (i.e. indistinguishable at the level of the decorated graph $S$). We therefore define 
    $[d_v]$ as the equivalence class of all such $(d_v)$ that determine the same moduli specification.
\end{itemize}

Assume that $S$ is a \emph{connected} moduli specification, 
we would like to define the generating function of geometric counting as:
$$
    \textOGW(\Ga_S,\vec{a},\vec{\gamma},\bt) := \sum_{d\geq 0}\sum_{l\geq 0} \frac{1}{l!}\textOGW(\Ga_S[d]^{[l]},\vec{a}^{[l]},\vec{\gamma}^{[l]}),
$$
where $\Ga_S[d]^{[l]}$ is the moduli specification by adding the degree $d$ and extending $l$ marked labels on the unique black vertex. The dependence of $\textOGW(\Ga_S,\vec{a},\vec{\gamma},\bt)$ on $\bt$
is reflected in $\vec{\gamma}^{[l]}$, where $\bt$ is assigned to $\{1,\dots,l\}$.

Furthermore, assume $S$ is \emph{pure} ($\mu$ is non-zero on white vertices), we would like to use the combinatorial formula in Definition \ref{def:comb-OGW} to express $\textOGW(\Ga_S,\vec{a},\vec{\gamma},\bt)$. 

We introduce the following notations:
\begin{itemize}
    \item For two moduli specifications $S', S$, we can construct a morphism $\Ga_M \in \Hom(\Ga_{S'},\Ga_S)$ by dropping the condition on the equality of $d_v$, following the pattern of Definition \ref{def:mor-decoration}. 
    \item For a morphism $M\in \Hom(S',S)$, the association of the boundary contribution graph $BC(M)$ is independent of the decoration $d_v$ on the black vertices of $M$, so we can also define the boundary contribution graph for $\Ga_{M}\in \Hom(\Ga_{S'},\Ga_S)$, denoted by $BC(\Ga_M)$.
    \item Let $v$ be a black vertex of $\Ga_M$, and let $g_v, {\sf I}_v, V_w(v)$ be the genus, marked labels, the set of white vertices attached to $v$, respectively. We define
    $$I(\vec{a}|_v,\vec{\gamma}|_v,\bt) = [\prod_{i\in V_w(v)}X_i^{\mu_i}]\llangle \tau_{a_i}(\gamma_i)|_{i\in {\sf I}_v} \rrangle_{g_v,|{\sf I}_v|, |V_w(v)|}^{\OGW}.$$
\end{itemize}

Then we have the following theorem.
\begin{theorem}\label{thm:A-geo-count}\rm 
    Let $S$ be a pure connected moduli specification. Then we have
    \begin{align*}
        \textOGW(\Ga_S,\vec{a},\vec{\gamma},\bt)= \sum_{\Ga_{S'}\text{ is pure}}\sum_{\Ga_M \in \Hom(\Ga_{S'},\Ga_S)}\frac{1}{|\Aut(\Gamma_M)|}\frac{\text{Vol}_{BC(\Gamma_M)}}{(-\sv)^{|\widetilde{E}|}}
        \prod_{v\in V_b(M)} I(\vec{a}|_v,\vec{\gamma}|_v,\bt).
    \end{align*}
\end{theorem}

\begin{proof}
The proof follows from the enumeration of $M\in \Hom(S',S)$ and the Orbit-Stabilizer theorem.

In \emph{Step 1}, we will sum over the extended $l$ marked labels.

When $S$ is pure, $M\in \Hom(S',S)$ doesn't have the contracted boundary half-edges $H^{CB}$. The combinatorial formula states that 
\begin{align*}
    \textOGW(S^{[l]},\vec{a}^{[l]},\vec{\gamma}^{[l]}) = \sum_{S'_{l}\text{ is pure}} \sum_{M_{l}\in \Hom(S'_{l},S^{[l]})} \frac{\text{Vol}_{BC(M_{l})}}{(-\sv)^{|\widetilde{E}|}} \frac{I(S'_{l},\vec{a}^{[l]},\vec{\gamma}^{[l]})}{|\Aut(M_{l})|}.
\end{align*}

For $M_l \in \Hom(S'_l,S^{[l]})$, there exists a forgetful map $$\forget_l: \Hom(S'_l,S^{[l]})\rightarrow \Hom(S',S)$$ defined by forgetting $\{1,\dots,l\}$ in ${\sf I}^{[l]}$.
Let $M = \forget_l(M_l)$. 

Let $V_b(M)$ be the set of black vertices of $M$, and let $v^{[l]}=(v_1,\dots, v_l)$ be the vector of black vertices decorated by $i\in\{1,\dots ,l\}$.
There is a canonical $\Aut(M)$-action on $V_b(M)^{l}$ (\emph{length-$l$ vectors of $V_b(M)$}) defined as
$$\si\cdot (v_1,\dots,v_l) := (\si(v_1),\dots,\si(v_l)), \quad \si\in\Aut(M).$$
 
By definition, the automorphism group $\Aut(M_l)$ is a subgroup of $\Aut(M)$ that fixes black vertices labeled by $\{1,\dots,l\}$.
In other words, $\Aut(M_l)$ is the stablizer of $v^{[l]}$. By the orbit-stablizer theorem, we have
$$
    \frac{|\Aut(M)|}{|\Aut(M_l)|} = |\text{Orb}(v^{[l]})|.
$$
Two assignments $v_1^{[l]}, v_2^{[l]}$ on $M$ gives the same decoration $M_l\in \forget^{-1}_l(M)$ if and only if $v_1^{[l]}, v_2^{[l]}$ are in the same orbit of $\Aut(M)$.
In other words, there is a one-to-one correspondence between the set $\forget^{-1}_l(M)$ and the set of orbits $V_b(M)^l/\Aut(M)$. 

Moreover, $\text{Vol}(M_l)$, $|\widetilde{E}|$ are independent of the decoration on black vertices.
Hence,
\begin{align*}
    \textOGW(S^{[l]},\vec{a}^{[l]},\vec{\gamma}^{[l]}) = \sum_{S'\text{ is pure} \atop {M\in \Hom(S',S)}} \frac{\text{Vol}_{BC(M)}}{(-\sv)^{|\widetilde{E}|}}\sum_{M_l\in \forget_l^{-1}(M)}\frac{I(S'_{l}(M_l),\vec{a}^{[l]},\vec{\gamma}^{[l]})}{|\Aut(M_l)|}&
    \\
    = \sum_{S'\text{ is pure}\atop {M\in \Hom(S',S)}} \frac{1}{|\Aut(M)|}\frac{\text{Vol}_{BC(M)}}{(-\sv)^{|\widetilde{E}|}}
    \sum_{v^{[l]}\in V_b(M)^{l}}I(S'_{v^{[l]}},\vec{a}^{[l]},\vec{\gamma}^{[l]})&,
\end{align*}
where $S'_{v^{[l]}}$ is the morphism obtained by labelling $\{1,\dots,l\}$ to $v_i$'s in $S'$. 
Moreover,
we have
\begin{align*}
    &\sum_{l\geq 0}\frac{1}{l!}\textOGW(S^{[l]},\vec{a}^{[l]},\vec{\gamma}^{[l]}) 
    \\
    &= \sum_{S' \text{ is pure}\atop {M\in \Hom(S',S)}}
    \frac{1}{|\Aut(M)|}\frac{\text{Vol}_{BC(M)}}{(-\sv)^{|\widetilde{E}|}}
    \prod_{v\in V_b(M)} I(\vec{a}|_v,\vec{\gamma}|_v,d_v,\bt),
\end{align*}
where $$
    I(\vec{a}|_v,\vec{\gamma}|_v,d_v,\bt) := \sum_{l\geq 0}\frac{1}{l!}\langle \tau_{a_i}(\gamma_i)|_{i\in {\sf I}_v},\bt^l\rangle_{g_v,\vec{d}(v),\vec{\mu}(v)}^{\OGW},
$$
$\vec{d}(v)$ is given by \eqref{eqn:vec-degree}, and $\vec{\mu}(v)$ is the vector of the values of $\mu$ on the white vertices attached to $v$.

~

In \emph{Step 2}, we sum over all curve degree $d$. 

Let $\Ga_M \in \Hom(\Ga_{S'},\Ga_S)$ and $(d_v)$ be the vector of decoration of curve degree on the black vertex $V_b(M)$.
We write $\Ga_M[d_v] \in \Hom(\Ga_{S'}[d_v],\Ga_S[d])$, where $d=\sum_{v\in V_b(\Ga_M)}d_v$.

Let $m=|V_b(\Ga_M)|$, and let $(v_1,\dots,v_m)$ be the vector of black vertices of $\Ga_M$.
There is an $\Aut(\Ga_M)$ action on $(d_{v_1},\dots,d_{v_m})$: 
$$\si\cdot (d_{v_1},\dots,d_{v_m}):= (d_{\si(v_1)},\dots,d_{\si(v_m)}), \quad \si\in\Aut(\Ga_M).$$
Then $\Aut(\Ga_M[d_v]) = \text{Stab}_{\Aut(\Ga_M)}(d_v)$. Then the Orbit-Stabilizer theorem tells that
$$
    \frac{|\Aut(\Ga_M)|}{|\Aut(\Ga_M[d_v])|} = |\text{Orb}((d_v))|.
$$
Let $[d_v]$ be the equivalent classes of $(d_v)$ under the action $\Aut(\Ga_M)$, which are 1-1 corresponding to the morphisms $\Ga_M[d_v] \in \Hom(\Ga_{S'}[d_v],\Ga_S[d])$ with $d=\sum_{v\in V_b(\Ga_M)}d_v$.
We have
\begin{align*}
    &\textOGW(\Gamma_S,\vec{a},\vec{\gamma},\bt) 
    \\
    &= \sum_{\Ga_{S'}\text{ is pure}}\sum_{\Ga_M \in \Hom(\Ga_{S'},\Ga_S)}\frac{\text{Vol}_{BC(\Gamma_M)}}{(-\sv)^{|\widetilde{E}|}}\sum_{[d_v]}
    \frac{1}{|\Aut(\Gamma_M[d_v])|}
    \prod_{v\in V_b(M)} I(\vec{a}|_v,\vec{\gamma}|_v,d_v,\bt)
    \\
    &= \sum_{\Ga_{S'}\text{ is pure}}\sum_{\Ga_M \in \Hom(\Ga_{S'},\Ga_S)}\frac{1}{|\Aut(\Gamma_M)|}\frac{\text{Vol}_{BC(\Gamma_M)}}{(-\sv)^{|\widetilde{E}|}}\sum_{d_v}
    \prod_{v\in V_b(M)} I(\vec{a}|_v,\vec{\gamma}|_v,d_v,\bt)
    \\
    &= \sum_{\Ga_{S'}\text{ is pure}}\sum_{\Ga_M \in \Hom(\Ga_{S'},\Ga_S)}\frac{1}{|\Aut(\Gamma_M)|}\frac{\text{Vol}_{BC(\Gamma_M)}}{(-\sv)^{|\widetilde{E}|}}
    \prod_{v\in V_b(M)} I(\vec{a}|_v,\vec{\gamma}|_v,\bt),
\end{align*}
which complete the proof.
\end{proof}

\subsection{B-model geometric counting}\label{sec:B-geo-count}
Let $W_{g,m,\vec{\mu}}(\bt,\cE_i)$ be the coefficients of the Laurent series $W_{g,m,n}(t^0,q;\cE_i;X_j)$ (see \eqref{eqn:SYZ-B-potential}):
$$
    W_{g,m,n}(t^0, q;\cE_1,\dots,\cE_m;{X}_1,\dots,{X}_n) = \sum_{\vec{\mu}=(\mu_1,\dots,\mu_n)\atop {\mu_i\in\bZ_{\neq 0}}} W_{g,m,\vec{\mu}}(\bt,\cE_i)X_1^{\mu_1}\dots X_n^{\mu_n}.
$$
Let $\Ga_S = (V_b \sqcup V_w, E, g, {\sf{I}}, \mu)$ be a connected decorated graph with one black vertex, $n$ white vertices and $m$ marked labels ${\sf I}$.
We define a graph sum based on the summation over $\Ga_M\in \Hom(\Ga_{S'},\Ga_S)$. On the marked label ${\sf I}$, we assign the classes $\frac{\kappa_{z_i}(\cE_i)}{z_i-\psi_i}$, $i\in {\sf I}$.
In A-model, we define $\textOGW_{g,m,n}(\Ga_S, \frac{\kappa_{z_i}(\cE_i)}{z_i-\psi_i},\bt)$ as a generating function of open geometric counting with $\vec{a},\vec{\gamma}$ corresponding to the class $\frac{\kappa_{z_i}(\cE_i)}{z_i-\psi_i}$:
\begin{align*}
    \textOGW_{g,m,n}(\Ga_S, \frac{\kappa_{z_i}(\cE_i)}{z_i-\psi_i},\bt) := \sum_{\vec{a}=(a_1,\dots,a_m)\atop{a_i\in\bZ_{\geq 0}}} \textOGW(\Ga_S,\vec{a},(\kappa_{z_i}(\cE_i))_{i\in {\sf I}},\bt)\prod_{i=1}^m z_i^{-a_i-1}.
\end{align*}
In B-model, we define
\begin{align*}
    G_{g,m,n}(\Ga_S,\bt)= \sum_{\Ga_{S'}\text{ is pure}}&\sum_{\Ga_M \in \Hom(\Ga_{S'},\Ga_S)}\frac{1}{|\Aut(\Gamma_M)|}\frac{\text{Vol}_{BC(\Gamma_M)}}{(-\sv)^{|\widetilde{E}|}}
    \\
    &\times \prod_{v\in V_b(M)} (-1)^{g_v-1} W_{g_v, |{\sf I}_v|,\vec{\mu}(v)}(\bt, \cE_i|i\in {\sf I}_v),
\end{align*}

By Theorem \ref{thm:SYZ-open-mirror} and Theorem \ref{thm:A-geo-count}, we have
\begin{theorem}\label{thm:geo-MS}
    \rm Let $S$ be a pure connected moduli specification with one black vertex, $n$ white vertices and $m$ marked labels ${\sf I}$, it holds that
    $$
        \textOGW_{g,m,n}(\Ga_S, \frac{\kappa_{z_i}(\cE_i)}{z_i-\psi_i},\bt) = G_{g,m,n}(\Ga_S,\bt).
    $$
\end{theorem}

\appendix
\section{Bessel functions}\label{sec:Bessel}
The special function $I_\alpha(x)$ in $J$-function is the modified Bessel function of the first kind. It is defined as
\[
    I_\alpha(x) = \sum_{m=0}^{\infty} \frac{1}{m!\Gamma(m+\alpha+1)}(\frac{x}{2})^{2m+\alpha}.
\]
The derivative of $I_\alpha(x)$ is given by
\[
    I'_\alpha(x) = I_{\alpha+1}(x) + \frac{\alpha}{x}I_\alpha(x).
\]
Let $d\in\bZ_{\neq 0}$ and $q,\sv>0$, we get 
\[
    I'_d\Big(\frac{2\sqrt{q}d}{\sv}\Big) = I_{d+1}\Big(\frac{2\sqrt{q}d}{\sv}\Big) + 
    \frac{\sv}{2\sqrt{q}}I_d\Big(\frac{2\sqrt{q}d}{\sv}\Big).
\]
For $n\in\mathbb{N}$, 
\[
    I_n(x) = I_{-n}(x).
\]
We have $$I_0(0)= 1, \quad I_\alpha(0)= 0 \quad(\alpha > 0).$$

\section{Residue Calculation}\label{sec:residue}
Let $\mu\in\bZ$, $q,\sv > 0$, consider
\[
    C_\mu := \Res_{Y=0} e^{\mu(Y+\frac{q}{Y})/\sv}\frac{dY}{Y^{\mu+1}},
\]
\[
    D_\mu := \Res_{Y=0} e^{\mu(Y+\frac{q}{Y})/\sv}\frac{dY}{Y^{\mu+2}}.
\]
To calculate $C_\mu$ and $D_\mu$, we expand
\[
    e^{\mu(Y+q/Y)/\sv} = \sum_{m,n = 0}^\infty \frac{(\frac{\mu}{\sv})^n(\frac{\mu q}{\sv})^m Y^{n-m}}{n!m!}.
\]
For $C_\mu,\ \mu\geq 0$, let $n=m+\mu$,
\begin{equation*}
    \begin{aligned}
       C_\mu &= \sum_{m=0}^{\infty} \frac{(\frac{\mu}{\sv})^{m+\mu}(\frac{\mu q}{\sv})^m}{(m+\mu)!m!}
    \\
    &= 
    \frac{1}{\sqrt{q}^{\mu}}\sum_{m=0}^{\infty}\frac{1}{m!(m+\mu)!}\Big(\frac{\mu\sqrt{q}}{\sv}\Big)^{2m+\mu}
    \\
    &= 
    \frac{1}{\sqrt{q}^\mu}I_\mu\Big(\frac{2\sqrt{q}\mu}{\sv}\Big). 
    \end{aligned}
\end{equation*}
For $C_\mu,\ \mu <0$, let $m=n-\mu$,
\begin{equation*}
    \begin{aligned}
        C_\mu &= \sum_{n=0}^{\infty} \frac{(\frac{\mu}{\sv})^n(\frac{\mu q}{\sv})^{n-\mu}}{n!(n-\mu)!}
        \\
        &= \frac{1}{\sqrt{q}^\mu}\sum_{n=0}^{\infty}\frac{1}{n!(n-\mu)!}\Big(\frac{\mu\sqrt{q}}{\sv}\Big)^{2n-\mu} 
        \\
        &= \frac{1}{\sqrt{q}^\mu}I_{-\mu}\Big(\frac{2\sqrt{q}\mu}{\sv}\Big) = 
        \frac{1}{\sqrt{q}^\mu}I_{\mu}\Big(\frac{2\sqrt{q}\mu}{\sv}\Big).
    \end{aligned}
\end{equation*}
For $D_\mu,\ \mu\geq -1$, let $n = m+\mu+1$,
\begin{equation*}
    \begin{aligned}
        D_\mu &= \sum_{m=0}^{\infty}\frac{(\frac{\mu}{\sv})^{m+\mu+1}(\frac{\mu q}{\sv})^m}{(m+\mu+1)!m!}
        \\
        &= \frac{1}{\sqrt{q}^{\mu+1}}\sum_{m=0}^{\infty}\frac{1}{(m+\mu+1)!m!}\Big(\frac{\mu\sqrt{q}}{\sv}\Big)^{2m+\mu+1}
        \\
        &= \frac{1}{\sqrt{q}^{\mu+1}}I_{\mu+1}\Big(\frac{2\sqrt{q}\mu}{\sv}\Big).
    \end{aligned}
\end{equation*}
For $D_\mu,\ \mu<-1$, let $m = n-\mu-1$,
\begin{equation*}
    \begin{aligned}
        D_\mu &= \sum_{n=0}^{\infty}\frac{(\frac{\mu}{\sv})^n(\frac{\mu q}{\sv})^{n-\mu-1}}{n!(n-\mu-1)!}
        \\
        &= \frac{1}{\sqrt{q}^{\mu+1}}\sum_{n=0}^{\infty}\frac{1}{n!(n-\mu-1)!}\Big(\frac{\mu\sqrt{q}}{\sv}\Big)^{2n-\mu-1}
        \\
        &= \frac{1}{\sqrt{q}^{\mu+1}}I_{-\mu-1}\Big(\frac{2\sqrt{q}\mu}{\sv}\Big) 
        = \frac{1}{\sqrt{q}^{\mu+1}}I_{\mu+1}\Big(\frac{2\sqrt{q}\mu}{\sv}\Big).
    \end{aligned}
\end{equation*}

\end{document}